\newcommand{\sub}{\subset}
\newcommand{\Lom}[1]{L^{#1}(\Omega)}
\newcommand{\norm}[2][]{\|#2\|_{#1}}
\newcommand{\normm}[2]{\|#2\|_{#1}}
\newcommand{\na}{\nabla}
\newcommand{\delny}{\partial_{\nu}}
\newcommand{\amrand}{\big\rvert_{\partial\Omega}}
\newcommand{\Ombar}{\overline{\Omega}}
\newcommand{\kl}[1]{\left(#1\right)}
\newcommand{\Lap}{\Delta}
\newcommand{\dOm}{\partial\Omega}
\newcommand{\Tmax}{T_{\rm max}}
\newcommand{\ddt}{\frac{\mathrm{d}}{\mathrm{d}t}}
\newcommand{\intnT}{\int_0^T}
\newcommand{\eps}{\varepsilon}
\newcommand{\set}[1]{\left\{#1\right\}}
\newcommand{\ue}{u_{\eps }}
\newcommand{\ve}{v_{\eps }}
\newcommand{\uet}{u_{\eps t}}
\newcommand{\vet}{v_{\eps t}}
\newcommand{\ye}{y_{\eps }}
\newcommand{\weakto}{\rightharpoonup}
\newcommand{\weakstarto}{\stackrel{\ast}{\rightharpoonup}}
\newcommand{\calF}{\mathcal{F}}
\newcommand{\intninf}{\int_0^\infty}
\newcommand{\intnt}{\int_0^t}
\let\oldsection\section
\renewcommand\section{\setcounter{equation}{0}\oldsection}
\def\XXint#1#2#3{{\setbox0=\hbox{$#1{#2#3}{\int}$}
  \vcenter{\hbox{$#2#3$}}\kern-.5\wd0}}
\newcommand{\nn}{\nonumber}
\newcommand{\iO}{\int_\Omega}
\newcommand{\io}{\int_\Omega}
\newcommand{\Om}{\Omega}
\newcommand{\pa}{\partial}
\newcommand{\f}{\frac}
\newtheorem{thm}{Theorem}[section]
\newtheorem{lem}{Lemma}[section]
\newtheorem{dnt}{Definition}[section]
\newtheorem{remark}{Remark}[section]
\author{
Johannes Lankeit$^1$ \footnote{johannes.lankeit@math.uni-paderborn.de}\quad
Yulan
 Wang$^{2}$,\footnote{wangyulan-math@163.com}\\
\emph{\small 1. Institut f\"{u}r Mathematik, Universit\"{a}t
Paderborn, Warburger Straße 100, 33098 Paderborn, Germany}\\
\emph{ \small 2. School of Science, Xihua University, Chengdu
610039, China}} {\small }{\small }
\title{Global existence, boundedness and stabilization in a high-dimensional chemotaxis system with consumption}
\date{}
\begin{document}

\maketitle

\begin{abstract}
\noindent This paper deals with the homogeneous Neumann boundary-value problem for the
chemotaxis-consumption system
\begin{eqnarray*}
\left\{
\begin{array}{llc}
u_t=\Delta u-\chi\nabla\cdot\big(u\nabla v\big)+\kappa u-\mu u^2,
&x\in \Omega, \,t>0,\\
\displaystyle v_t=\Delta v-uv , &x\in \Omega,\, t>0,
 \end{array}
\right.
\end{eqnarray*}
in $N$-dimensional bounded smooth domains for suitably regular positive initial data. \\
We shall establish the existence of a global bounded
classical solution for suitably large $\mu$ and prove that for any $\mu>0$ there
exists a weak solution.\\
Moreover, in the case of $\kappa>0$ convergence to the constant equilibrium $(\f{\kappa}{\mu },0)$ is shown. \\
\noindent{\bf Keywords:}  Chemotaxis; logistic source; global existence; boundedness; asymptotic stability; weak solution\\
\noindent {\bf MSC:} 35Q92; 35K55; 35A01; 35B40; 35D30; 92C17
\end{abstract}
% 35K55 View Publications (1973-now) Nonlinear parabolic equations
% 35B65 View Publications (1980-now) Smoothness and regularity of solutions
% 35B40 View Publications (1973-now) Asymptotic behavior of solutions
% 92C17 View Publications (2000-now) Cell movement (chemotaxis, etc.)
% 35B35 View Publications (1973-now) Stability
% 35A01 View Publications (2010-now) Existence problems: global existence, local existence, non-existence
% 35D View Publications (1973-now) Generalized solutions
% 35D05 View Publications (1973-2009) Existence of generalized solutions
% 35D10 View Publications (1973-2009) Regularity of generalized solutions
% 35D30 View Publications (2010-now) Weak solutions
% 35D35 View Publications (2010-now) Strong solutions
% 35D40 View Publications (2010-now) Viscosity solutions
% 35D99 View Publications (1973-now) None of the above, but in this section
% 35K65 View Publications (1980-now) Degenerate parabolic equations
% 35Q92 View Publications (2010-now) PDEs in connection with biology and other natural sciences

\section{Introduction}
Chemotaxis is the adaption of the direction of movement to an external chemical signal. This signal can be a substance produced by the biological agents (cells, bacteria) themselves, as is the case in the celebrated Keller-Segel model (\cite{KS}, \cite{horstmann_I}) or -- in the case of even simpler organisms -- by a nutrient that is consumed. A prototypical model taking into account random and chemotactically directed movement of bacteria alongside death effects at points with high population densities and population growth together with diffusion and consumption of the nutrient is given by
\begin{align}\label{sys.intro}
 u_t&=\Delta u -\chi \nabla \cdot (u\nabla v) +\kappa u-\mu u^2\\
 v_t&=\Delta v - uv,\nn
\end{align}
considered in a smooth, bounded domain $\Om\subset \mathbb{R} ^N$ together with homogeneous Neumann boundary conditions and suitable initial data. Herein, $\chi > 0$, $\kappa\in\mathbb{R} $, $\mu >0$ denote chemotactic sensitivity, growth rate (or death rate, if negative) and strength of the overcrowding effect, respectively.
The system \eqref{sys.intro}, in a basic form often with $\kappa=\mu =0$, appears as part of chemotaxis-fluid models intensively studied over the past few years (see e.g. the survey \cite[sec. 4.1]{BBTW} or \cite{cao_lankeit} for a recent contribution with an extensive bibliography).

Compared with the classical Keller-Segel model
\begin{align}\label{KS}
 u_t&=\Delta u-\chi \na \cdot(u\na v)+\kappa u-\mu u^2\\
 v_t&=\Delta v-v+u, \nn
\end{align}
which we have given in the form with logarithmic source terms paralleling that in \eqref{sys.intro}, at first glance, \eqref{sys.intro} seems much more amenable to the global existence (und boundedness) of solutions -- after all, the second equation by comparison arguments immediately provides an $L^\infty$-bound for $v$.

However, such a bound is not sufficient for dealing with the chemotaxis term, and accordingly global existence and boundedness of solutions to \eqref{sys.intro} with $\kappa=\mu =0$ is only known under the smallness condition
\begin{equation}\label{smallnessconditionforsourcefreemodel}
  \chi \norm[\Lom\infty]{v(\cdot,0)}\leq \frac1{6(N+1)}
\end{equation}
on the initial data (\cite{tao_consumption_bdness}) or in a
two-dimensional setting (\cite{win_ctfluid}, \cite{win_arma} and
also \cite{xieli}). Their rate of convergence has been treated in
\cite{zhang_li}. In three-dimensional domains, weak solutions have
been constructed that eventually become smooth
\cite{taowin_ev_consumption}.

For \eqref{KS}, the presence of logarithmic terms has been shown to exclude otherwise possible finite-time blow-up phenomena (cf. \cite{win_blowuphigherdim}, \cite{mizoguchi_winkler_13}) -- at least as long as $\mu $ is sufficiently large if compared to the strenght of the chemotactic effects (\cite{winkler_10_boundedness})
 or if the dimension is $2$ (\cite{osaki_yagi_02}). If the quotient $\frac{\mu }{\chi }$ is sufficiently large, solutions to \eqref{KS} uniformly converge to the constant equilibrium (\cite{win_stability}); convergence rates have been considered in \cite{he_zheng}. Explicit largeness conditions on $\frac{\mu }{\chi }$ that ensure convergence, also for slightly more general source terms, can be found in \cite{lin_mu}, see also \cite{win_ksns_logsource}. For small $\mu >0$, at least global weak solutions are known to exist (\cite{lankeit_ev_smooth}), and in $3$-dimensional domains and for small $\kappa$, their large-time behaviour has been investigated (\cite{lankeit_ev_smooth}).

Also the chemotaxis-consumption model \eqref{sys.intro} has
already been considered with nontrivial source terms in
\cite{wang_khan_khan}. There it was proved that classical
solutions exist globally and are bounded as long as
\eqref{smallnessconditionforsourcefreemodel} holds -- which is the
same condition as for $\kappa=\mu =0$, thus shedding no light on
any possible interplay between chemotaxis and the population
kinetics.

In a three-dimensional setting and in the presence of a Navier-Stokes fluid, in \cite{lankeit_fluid} it was recently possible to construct global weak solutions for any positive $\mu $, which moreover eventually become classical and uniformly converge to the constant equilibrium in the large-time limit.
%3d is important due to the fluid and for some GNI-interpolation (cf. \cite[Lemma 3.5]{lankeit_fluid}.

It is the aim of the present article to prove the existence of global classical solutions if only $\mu $ is suitably large and to show their large-time behaviour. For the case of small $\mu >0$, we will prove the existence of global weak solutions (in the sense of Definition \ref{def:weaksol}). \\

\noindent\textbf{What largeness condition on $\mu $ might be sufficient for boundedness?}
For the Keller-Segel type model \eqref{KS} the typical condition
reads: 'If $\mu $ is large compared to $\chi $, then the
solution is global and bounded, independent of initial data.' In
order to see why this condition would be far less natural for
\eqref{sys.intro}, let us suppose we are given suitably regular
initial data $u_0$, $v_0$ and a corresponding solution $(u,v)$ of
\[
 \begin{cases} u_t=\Delta u - \chi \nabla\cdot(u\nabla v) + \kappa u-\mu u^2\\
  v_t=\Delta v - uv\\
  \delny u\amrand=\delny v\amrand=0\\
  u(\cdot,0)=u_0, v(\cdot,0)=v_0,
 \end{cases}
\]
and let us define
\[
 w:=\chi  v.
\]
Then $(u,w)$ solves
\[
 \begin{cases} u_t=\Delta u - \nabla\cdot(u\nabla w) + \kappa u-\mu u^2\\
  w_t=\Delta w - uw\\
  \delny u\amrand=\delny v\amrand=0\\
  u(\cdot,0)=u_0, w(\cdot,0)=\chi v_0,
 \end{cases}
\]
which is the same system, only with different chemotaxis
coefficent and rescaled initial data for the second component.
Consequently, \textit{in
\eqref{sys.intro}, large initial data equal high chemotactic
strength}. Hence, there cannot be any condition for global
existence which includes $\mu $ and $\chi $, but not $\norm[\Lom
\infty]{v_0}$. In light of this discussion, the requirement in
Theorem \ref{thm1} that $\mu $ be large with respect to $\chi
\norm[\Lom \infty]{v_0}$ seems natural. On the other hand, this
observation does not preclude conditions that involve neither
$\chi $ nor $\norm[\Lom \infty]{v_0}$, and indeed $\mu >0$ is
sufficient for the global existence of weak solutions.

The first main result of the present article is global existence of classical solutions, provided that $\mu $ is sufficiently large as compared to $\norm[\Lom\infty]{\chi v_0}$:

\begin{thm}\label{thm1}
 Let $N\in \mathbb{N} $ and let $\Om\sub \mathbb{R} ^N$ be a smooth, bounded domain. There are constants $k_1=k_1(N)$ and $k_2=k_2(N)$ such that the following holds: Whenever $\kappa\in\mathbb{R} $, $\chi >0$, and $\mu >0$ and initial data
\begin{eqnarray}\label{id}
 \left\{
\begin{array}{llc}
u_0\in C^0(\overline\Omega), \quad { u_0> 0} \,\,\text{in}\,\,\Ombar,\\
\displaystyle
v_0\in C^1(\Ombar), \quad { v_0> 0 } \,\,\text{in}\,\,\Ombar%,\,\, \textrm{with}\, q>N.\\
 \end{array}
\right.
\end{eqnarray}
 are such that
\[
 \mu >k_1(N)\norm[\Lom\infty]{\chi v_0}^{\frac1N} + k_2(N)\norm[\Lom\infty]{\chi v_0}^{2N},
\]
 then the system
 \begin{equation}\label{a}
\left\{
\begin{array}{llc}
u_t=\Delta u-\nabla\cdot\big(u\nabla v\big)+\kappa u-\mu u^2,
&x\in \Omega, \,t>0,\\
\displaystyle v_t=\Delta v-uv , &x\in \Omega,\, t>0,\\
 \displaystyle
\delny u=\delny v=0, &x\in\pa\Om,\, t>0,\\
\displaystyle u(x,0)=u_0(x),\,\, v(x,0)=v_0(x),
 &x\in\Om,
\end{array}
\right. \end{equation}
has a unique global classical solution $(u,v)$ which is uniformly bounded in the sense that there is some constant $C>0$ such that
\begin{eqnarray}\label{B}
\|u(\cdot,t)\|_{L^{\infty}(\Omega)}+\|v(\cdot,t)\|_{W^{1,\infty}(\Omega)}
 \le C \qquad
\mathrm{for}\,\,\mathrm{all} \quad t\in(0,\infty). \end{eqnarray}
\end{thm}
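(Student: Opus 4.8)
The plan is to produce, by standard parabolic theory, a unique maximal classical solution on some interval $[0,\Tmax)$ that is positive in both components, and then to show it cannot blow up, so that $\Tmax=\infty$ and the uniform bound \eqref{B} holds. Following the rescaling $w:=\chi v$ discussed in the introduction, which turns the system with chemotactic coefficient $\chi$ into one with coefficient $1$ and signal datum $\chi v_0$, it suffices to work with \eqref{a}, and I write $M:=\norm[\Lom\infty]{\chi v_0}$ for the quantity balanced against $\mu$. Local existence and the extensibility criterion — namely that $\Tmax<\infty$ forces $\limsup_{t\nearrow\Tmax}(\norm[\Lom\infty]{u(\cdot,t)}+\norm[W^{1,\infty}(\Om)]{v(\cdot,t)})=\infty$ — I would obtain by a contraction argument in a suitable H\"older/Sobolev setting, while positivity of $u$ and $v$ follows from the maximum principle. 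Thus the only thing left to establish is \eqref{B}.

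Two bounds come essentially for free and serve as the base level of a bootstrap. Since $u\ge0$, the second equation gives $v_t\le\Lap v$, so by comparison $0<v\le M$ on $[0,\Tmax)$. Integrating the first equation and using $\big(\io u\big)^2\le|\Om|\io u^2$ yields $\ddt\io u\le\kappa\io u-\f{\mu}{|\Om|}\big(\io u\big)^2$, whence $\sup_t\io u(\cdot,t)<\infty$ by ODE comparison.

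The heart of the argument — and the step I expect to be the main obstacle — is a differential inequality for $\io u^p$ with $p$ large (depending on $N$), in which the logistic dissipation $-\mu\io u^{p+1}$ is made to dominate the chemotactic contribution. Testing the first equation with $u^{p-1}$ and treating the cross term by Young's inequality produces
\begin{align*}
 \f1p\ddt\io u^p+\f{p-1}2\io u^{p-2}|\na u|^2
 \le \f{p-1}2\io u^p|\na v|^2+\kappa\io u^p-\mu\io u^{p+1}.
\end{align*}
To control $\io u^p|\na v|^2$ I would exploit the consumption structure: the bound $v\le M$ together with the integral inequality $\io|\na v|^4\le c(N)\,M^2\io|D^2v|^2$ (valid under the Neumann condition, up to boundary-curvature terms absorbed by the smoothness of $\Om$), and an estimate for $\io|D^2v|^2$ obtained by testing $v_t=\Lap v-uv$ against $-\Lap v$. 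Feeding these into a H\"older splitting of $\io u^p|\na v|^2$ and absorbing the resulting $u$-gradient terms into the dissipation transfers the factor $M$ onto the chemotactic term; the hypothesis that $\mu$ exceed $k_1(N)M^{1/N}+k_2(N)M^{2N}$ is exactly what is needed for $-\mu\io u^{p+1}$ to swallow both the $M$-weighted cross term and the growth term $\kappa\io u^p$, leaving an inequality of the form $\ddt\io u^p\le-c\io u^p+C$ and hence $\sup_t\io u^p(\cdot,t)<\infty$ for some $p>N$. The bookkeeping producing the two explicit exponents $1/N$ and $2N$, and the simultaneous handling of the coupled quantities $\io u^p$ and $\io|\na v|^4$, is the delicate part.

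With $u$ bounded in $\Lom p$ for some $p>N$ uniformly in time, the remainder is a routine bootstrap. Applying the smoothing estimates for the Neumann heat semigroup to the variation-of-constants representation of $v$, and using $\norm[\Lom r]{uv}\le M\norm[\Lom r]{u}$, gives a uniform bound on $\norm[\Lom\infty]{\na v(\cdot,t)}$. A Moser-type iteration (or a further semigroup estimate) applied to the first equation upgrades this to a uniform bound on $\norm[\Lom\infty]{u(\cdot,t)}$, after which parabolic regularity in the second equation yields the uniform bound on $\norm[W^{1,\infty}(\Om)]{v(\cdot,t)}$. Together these give \eqref{B}, so the extensibility criterion forces $\Tmax=\infty$ and the proof is complete.
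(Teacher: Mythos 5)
Your overall architecture coincides with the paper's (local existence plus an extensibility criterion, an $L^p$ differential inequality in which the logistic dissipation absorbs the chemotaxis term, then a Neumann-heat-semigroup bootstrap from $\norm[\Lom p]{u}$ with $p>N$ to the bound \eqref{B}; your last step is essentially the paper's Lemma \ref{lem:ulp.to.boundedness}). However, the central absorption step, as you describe it, has a genuine gap. You propose to control the cross term $\io u^p|\na v|^2$ using only the quantities $\io |\na v|^4$ and $\io |D^2v|^2$, the latter estimated by testing $v_t=\Lap v-uv$ against $-\Lap v$. First, that testing procedure makes $\io|\Lap v|^2$ a \emph{dissipation} term of the functional $\io|\na v|^2$, so it yields only space--time integrability, not a bound at each fixed time; hence $\io|\na v|^4\le c(N)\norm[\Lom\infty]{v_0}^2\io|D^2v|^2$ does not give you a usable bound on $\io|\na v|^4$ at the moment you need it. Second, and more fundamentally, any H\"older splitting of $\io u^p|\na v|^2$ against $\io|\na v|^4$ leaves the complementary factor $\bigl(\io u^{2p}\bigr)^{1/2}$, and the only $u$-dissipation available from testing with $u^{p-1}$ is $\io|\na u^{p/2}|^2$. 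Since $W^{1,2}(\Om)\hookrightarrow L^{2N/(N-2)}(\Om)$ and $2N/(N-2)\ge 4$ only for $N\le 4$, the quantity $\io u^{2p}=\|u^{p/2}\|^4_{L^4(\Om)}$ cannot be controlled by $\io|\na u^{p/2}|^2$ together with any lower-order norm (mass, $\io u^p$) once $N\ge 5$; interpolating against lower norms only worsens the exponent, and at $N=4$ the critical embedding would force a smallness condition you do not have. Since Theorem \ref{thm1} is asserted for every $N\in\mathbb{N}$, this step fails, and the deferred ``bookkeeping'' cannot rescue it.

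The paper's way around this is to couple $\io u^p$ not with $\io|\na v|^4$ but with $\io|\na v|^{2p}$ for the \emph{same} large $p$ (eventually $p=N$, or slightly above): the evolution of $\io|\na v|^{2p}$ (Lemma \ref{l3.2}) produces the dissipation $\io|\na v|^{2p-2}|D^2v|^2$, and the interpolation inequality of Lemma \ref{l_interpolation} is applied with $q=p$, i.e.\ $\io|\na v|^{2p+2}\le 2(4p^2+N)\norm[\Lom\infty]{v}^2\io|\na v|^{2p-2}|D^2v|^2$, so that the Young splitting $\io u^p|\na v|^2\le \delta\io|\na v|^{2(p+1)}+C_\delta\io u^{p+1}$ (and the analogous splitting of the term $\io u^2|\na v|^{2p-2}$ coming from the $v$-equation) can be absorbed into that dissipation and into $-\mu\io u^{p+1}$. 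Tracking the constants in this two-sided absorption is precisely what produces the explicit exponents $\norm[\Lom\infty]{\chi v_0}^{2/p}$ and $\norm[\Lom\infty]{\chi v_0}^{2p}$ in the largeness condition \eqref{cond:mularge}; the choice of the coupled functional and the bookkeeping you set aside are inseparable.
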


\begin{remark}\label{R1}
Here we have to leave open the question, whether, for small values of $\mu >0$ and large $\chi v_0$, blow-up of solutions is possible at all. Consequently, the range of $\mu$ in this
 result is not necessarily an optimal one. Nevertheless, the present condition can easily be made explicit (see Lemma  \ref{lem:ge.for.positive.eps.or.large.mu}
 and \eqref{eq:defk1k2}
 for the values of $k_1$ and $k_2$). It seems worth pointing out that, in contrast to the condition \eqref{smallnessconditionforsourcefreemodel}, Theorem \ref{thm1} admits large values of $\chi v_0$, if only $\mu $ is appropriately large.
\end{remark}

The second outcome of our analysis is concerned with the large time behaviour of global solutions and reads as follows:

\begin{thm}\label{thm3}
Let $N\in \mathbb{N} $ and let $\Om\sub\mathbb{R} ^N$ be a bounded smooth domain.
Suppose that $\chi >0$, $\kappa>0$ and $\mu>0$. Let $(u,v)\in C^{2,1}(\Ombar\times(0,\infty ))\cap C^0(\Ombar\times[0,\infty))$ be any global bounded solution to \eqref{a} (in the sense that \eqref{B} is fulfilled) which obeys \eqref{id}. Then
\begin{eqnarray}\label{stability1}
\Big\|u(\cdot,t)-\f{\kappa}{\mu }\Big\|_{L^{\infty}(\Omega)}\rightarrow
0
\end{eqnarray}
and
\begin{eqnarray}\label{stability2}
\|v(\cdot,t)\|_{L^{\infty}(\Omega)}\rightarrow 0
\end{eqnarray}
as $t\rightarrow \infty$.
\end{thm}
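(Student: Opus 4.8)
The plan is to exploit the strictly positive equilibrium value $\ubar:=\frac{\kappa}{\mu}>0$ (this is where the hypothesis $\kappa>0$ is essential) through an entropy-type Lyapunov functional coupling both components. Concretely, I would set
\[
 \mathcal{F}(t):=\int_\Omega\Big(u-\ubar-\ubar\ln\tfrac{u}{\ubar}\Big)+\frac{\ubar}{4}\int_\Omega v^2,
\]
which is well defined and bounded below by $0$: the solution is classical with $u>0$ for $t>0$ by the strong maximum principle, and $s\mapsto s-\ubar-\ubar\ln\frac{s}{\ubar}$ is nonnegative with a unique minimum at $s=\ubar$. Finiteness of $\mathcal{F}(0)$ follows from $u_0$ being bounded away from $0$ on the compact set $\Ombar$.

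Differentiating along the flow and integrating by parts (using the Neumann conditions), the diffusion term of the $u$-equation produces $-\ubar\int_\Omega\frac{|\nabla u|^2}{u^2}$, the chemotactic term contributes the cross term $\ubar\int_\Omega\frac{\nabla u\cdot\nabla v}{u}$, and the logistic term collapses, thanks to $\kappa u-\mu u^2=-\mu u(u-\ubar)$, to exactly $-\mu\int_\Omega(u-\ubar)^2$ with no weight. The only dangerous contribution is the cross term, which I would absorb by Young's inequality into half of the diffusion dissipation at the cost of $+\frac{\ubar}{2}\int_\Omega|\nabla v|^2$; this is cancelled precisely by $\frac{d}{dt}\frac{\ubar}{4}\int_\Omega v^2=-\frac{\ubar}{2}\int_\Omega|\nabla v|^2-\frac{\ubar}{2}\int_\Omega uv^2$ arising from the consumption equation. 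The outcome is
\[
 \frac{d}{dt}\mathcal{F}(t)\le-\mu\int_\Omega(u-\ubar)^2-\frac{\ubar}{2}\int_\Omega\frac{|\nabla u|^2}{u^2}-\frac{\ubar}{2}\int_\Omega uv^2\le 0,
\]
so that $\mathcal{F}$ is nonincreasing, and integrating in time yields the two key integrabilities $\int_0^\infty\!\int_\Omega(u-\ubar)^2<\infty$ and $\int_0^\infty\!\int_\Omega uv^2<\infty$. This step works for every $\mu>0$, matching the hypotheses.

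The second and, to my mind, harder stage is to turn the time-integrability $\int_0^\infty\|u(\cdot,t)-\ubar\|_{L^2(\Omega)}^2<\infty$ into genuine uniform decay, since integrability alone does not force $\|u-\ubar\|_{L^2}\to0$. Here I would invoke parabolic regularity: because $(u,v)$ is globally bounded in the sense of \eqref{B}, standard interior and boundary H\"older (and then Schauder) estimates give a uniform-in-time bound for $u$ in some $C^\theta(\Ombar)$, and in particular uniform continuity of $t\mapsto\|u(\cdot,t)-\ubar\|_{L^2}^2$. Combining this equicontinuity with integrability via the elementary lemma ``uniformly continuous and integrable implies decay to $0$'' gives $\|u(\cdot,t)-\ubar\|_{L^2(\Omega)}\to0$, and an interpolation inequality of the form $\|w\|_{L^\infty}\le C\|w\|_{C^\theta}^{1-a}\|w\|_{L^2}^{a}$ together with the uniform $C^\theta$-bound upgrades this to $\|u(\cdot,t)-\ubar\|_{L^\infty(\Omega)}\to0$, i.e.\ \eqref{stability1}. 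I expect this regularity/interpolation bridge to be the main technical obstacle.

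Finally, \eqref{stability2} follows almost for free once $u\to\ubar$ uniformly: there is $t_0$ with $u\ge\frac{\kappa}{2\mu}$ on $\Omega\times[t_0,\infty)$, so, since $v\ge 0$, the function $v$ is a subsolution of the linear Neumann problem $z_t=\Delta z-\frac{\kappa}{2\mu}z$, and comparison against the spatially homogeneous supersolution $\|v(\cdot,t_0)\|_{L^\infty}e^{-\frac{\kappa}{2\mu}(t-t_0)}$ yields $\|v(\cdot,t)\|_{L^\infty}\to0$, in fact exponentially. Alternatively one can deduce $\|v\|_{L^2}\to0$ from $\int_0^\infty\!\int_\Omega uv^2<\infty$, the eventual lower bound on $u$, and the monotonicity of $t\mapsto\int_\Omega v^2$, and then interpolate against the uniform $W^{1,\infty}$-bound from \eqref{B} exactly as for $u$.
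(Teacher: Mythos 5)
Your proposal is correct, and its energy core is identical to the paper's: your $\mathcal{F}$ is, up to an additive constant and the coefficient of $\int_\Omega v^2$, exactly the functional $\calF_0$ in \eqref{eq:defF}, and your dissipation inequality is \eqref{FODI} of Lemma \ref{lem:ddtF}, giving $\int_0^\infty\int_\Omega\big(u-\frac{\kappa}{\mu}\big)^2<\infty$. The differences lie in how this is upgraded to uniform convergence. For $u$, the paper (Lemmata \ref{lem:holder} and \ref{lem:convergence.classical}) derives the same parabolic H\"older/Schauder bounds but then argues by compactness: the shifted solutions $u(\cdot,j+\tau)$ subconverge by Arzel\`a--Ascoli, the limit is identified via \eqref{eq:ueminlimit2.bounded}, and a subsequence-of-subsequences argument yields full convergence. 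Your route --- a Barbalat-type lemma (integrable plus uniformly continuous implies decay) followed by interpolation of $L^\infty$ between $C^\theta(\Ombar)$ and $L^2(\Om)$ --- is equally valid and more quantitative. One caution: a bound in the \emph{spatial} space $C^\theta(\Ombar)$, uniform in $t$, does not by itself give the uniform continuity of $t\mapsto\|u(\cdot,t)-\frac{\kappa}{\mu}\|_{L^2(\Om)}^2$ that you invoke; you need the temporal half of the parabolic estimate (the $C^{\theta,\theta/2}(\Ombar\times[t,t+1])$-bound that Porzio--Vespri provides, cf. Lemma \ref{lem:holder}), or, more cheaply, you can test the first equation with $u-\frac{\kappa}{\mu}$ and use \eqref{B} to get the one-sided bound $\frac{d}{dt}\|u(\cdot,t)-\frac{\kappa}{\mu}\|_{L^2(\Om)}^2\le C$, which also suffices for the Barbalat argument.

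For $v$, your argument is genuinely different from the paper's and, for classical solutions, stronger: once $u\ge\frac{\kappa}{2\mu}$ on $[t_0,\infty)$, comparison with the spatially homogeneous supersolution $\|v(\cdot,t_0)\|_{L^\infty(\Om)}e^{-\frac{\kappa}{2\mu}(t-t_0)}$ gives exponential decay of $\|v\|_{L^\infty(\Om)}$, whereas the paper's Lemma \ref{lem:vtozero} only yields $\int_\Om v\lesssim t^{-1/2}$ (hence $L^p$-decay for $p<\infty$ by interpolation, with $L^\infty$- and $C^2$-convergence recovered only through the compactness step). The paper takes this weaker, more roundabout route deliberately: Lemma \ref{lem:vtozero} is uniform in $\eps\in[0,1)$ and independent of any prior convergence of $u$, so it can be reused for the weak solutions in Theorem \ref{thm:weaksol-limit}; your comparison argument requires the uniform convergence of $u$ first, hence classical regularity, and would not carry over to that setting --- but for Theorem \ref{thm3} itself it is perfectly adequate.
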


\begin{remark}
 This theorem in particular applies to the solutions considered in Theorem \ref{thm1}.
\end{remark}

\begin{remark}
 Boundedness is not necessary in the sense of \eqref{B}; in light of Lemma \ref{lem:ulp.to.boundedness}, the existence of $C>0$ and $p>N$ such that
 \[
  \norm[\Lom p]{u(\cdot,t)}\leq C\qquad \text{for all } t>0
 \]
 would be sufficient.
\end{remark}

\noindent\textbf{Unconditional global weak solvability.}
 As in the context of the classical Keller-Segel model \eqref{KS} (\cite{lankeit_ev_smooth}), global weak solutions to \eqref{a} can be shown to exist regardless of the size of initial data and for any positive $\mu$:

\begin{thm}\label{thm:weaksol}
 Let $N\in\mathbb{N} $ and let $\Om\sub \mathbb{R} ^N$ be a bounded smooth domain. Let $\chi >0$, $\kappa\in \mathbb{R} $, $\mu >0$ and assume that $u_0$, $v_0$ satisfy \eqref{id}. Then
 system \eqref{a} has a global weak solution (in the sense of Definition \ref{def:weaksol} below).
\end{thm}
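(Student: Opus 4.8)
The plan is to obtain $(u,v)$ as a limit of classical solutions of an approximating family, the $\eps$-uniform estimates for which are driven by the logistic dissipation together with the comparison bound for the second component. First I would regularize the chemotactic sensitivity, replacing $u\na v$ by $\f{\ue}{1+\eps\ue}\na\ve$ with a small parameter $\eps\in(0,1)$, so that the chemotactic coefficient is bounded by $1/\eps$. For such a regularization Lemma \ref{lem:ge.for.positive.eps.or.large.mu} applies in its ``positive $\eps$'' branch and furnishes, for \emph{every} $\mu>0$, a global classical solution $(\ue,\ve)$ of the regularized system obeying \eqref{id}, with $\ue>0$ in $\Om$ and $0<\ve\le\norm[\Lom\infty]{v_0}$ by comparison in the second equation.

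Second, I would collect a priori bounds independent of $\eps$. Integrating the first equation over $\Om$ gives $\io\ue(\cd,t)\le C$ for all $t$, and the logistic term yields the space-time bound $\mu\intnT\io\ue^2\le C(T)$; testing the second equation by $\ve$ produces $\intnT\io|\na\ve|^2\le\f12\io v_0^2$. The decisive coupling estimate I would extract from the functional $\io\f{|\na\ve|^2}{\ve}$: its dissipation controls a quantity of the form $\intnT\io\f{|\na\ve|^4}{\ve^3}$, whose forcing is dominated by $\intnT\io\ue^2\ve\le\norm[\Lom\infty]{v_0}\,\intnT\io\ue^2$ and is therefore finite, and, since $\ve\le\norm[\Lom\infty]{v_0}$, this upgrades the gradient control to $\na\ve\in L^4(\Om\times(0,T))$ (and, through a higher-order variant of the same functional, to a yet stronger exponent). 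Feeding this into an entropy-type estimate obtained by testing the first equation by $\ln\ue$ lets me absorb the chemotactic contribution $\intnT\io\ue|\na\ve|^2$ via Hölder's inequality against the $L^2$- and $L^4$-bounds just found, whence $\intnT\io\f{|\na\ue|^2}{\ue}\le C(T)$ and hence $\sqrt{\ue}\in L^2(0,T;W^{1,2}(\Om))$, equivalently $\na\ue\in L^1(\Om\times(0,T))$ uniformly in $\eps$. Finally, reading $\uet$ and $\vet$ off the equations bounds them in a space of the type $L^1(0,T;(W^{m,\infty}(\Om))^*)$.

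Third, with $\ue$ bounded in $L^1(0,T;W^{1,1}(\Om))$ and $\uet$ in the dual space above, an Aubin--Lions argument yields $\ue\to u$ in $L^1(\Om\times(0,T))$ and a.e.\ along a subsequence; similarly $\ve\to v$ in $L^2$ and a.e., while $\na\ve\weakto\na v$ in $L^2$. The consumption term is the easiest: $\ue\ve\to uv$ in $L^1$ by Vitali, using a.e.\ convergence and $\ue\ve\le\norm[\Lom\infty]{v_0}\,\ue$ with $\{\ue\}$ equi-integrable. To pass to the limit in the quadratic $\ue^2$ and in the chemotactic term I need strong $L^2$-convergence of $\ue$, for which a.e.\ convergence plus equi-integrability of $\ue^2$ suffices; this I would secure by now testing the first equation by $\ue$ and absorbing $\intnT\io\ue^2|\na\ve|^2$ into the $\mu\intnT\io\ue^3$ produced by the logistic term, using the strengthened gradient bound on $\ve$, which gives $\ue\in L^3(\Om\times(0,T))$. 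Then $\ue^2\to u^2$ in $L^1$, and since $\f{\ue}{1+\eps\ue}\to u$ strongly in $L^2$ while $\na\ve\weakto\na v$ weakly in $L^2$, the product $\f{\ue}{1+\eps\ue}\na\ve\weakto u\na v$ tested against $\na\varphi\in L^\infty$. Inserting all these convergences into the regularized weak identities and recovering the initial data exhibits $(u,v)$ as a global weak solution in the sense of Definition \ref{def:weaksol}.

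\textbf{Main obstacle.} The crux is producing $\eps$-independent regularity of $\ue$ strong enough to control the quadratic nonlinearities $\ue^2$ and $\ue\na\ve$ in the limit. Everything funnels through the coupling integrals $\intnT\io\ue|\na\ve|^2$ and $\intnT\io\ue^2|\na\ve|^2$: the only source of integrability for $\ue$ is the logistic dissipation, while the matching integrability of $\na\ve$ must be squeezed out of the consumption equation through functionals such as $\io\f{|\na\ve|^2}{\ve}$, fed precisely by those logistic bounds together with $\ve\le\norm[\Lom\infty]{v_0}$. Making these two estimates mutually reinforcing so that they close in \emph{every} space dimension $N$ — in particular reaching the integrability of $\na\ve$ needed to obtain $\ue\in L^3$ — is the delicate, possibly iterative, step.
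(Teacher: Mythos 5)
Your overall architecture (regularize, derive $\eps$-uniform bounds from the logistic dissipation and the coupled functional of $\io\ue\ln\ue$ and $\io\frac{|\na\ve|^2}{\ve}$, then Aubin--Lions compactness and limit passage) matches the paper's, but there is a genuine gap at precisely the point you flag as the ``main obstacle'': the passage to the limit in the quadratic term $\ue^2$. You propose to obtain equi-integrability of $\{\ue^2\}$ from a uniform $L^3$ bound, which you would derive by testing the first equation with $\ue$ and absorbing $\intnT\io\ue^2|\na\ve|^2$ into $\mu\intnT\io\ue^3$; by Young's inequality this absorption requires $\na\ve\in L^6(\Om\times(0,T))$ uniformly in $\eps$. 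No such bound is established, and it is not obtainable ``through a higher-order variant of the same functional'' in arbitrary dimension $N$: the evolution of any functional of the type $\io|\na\ve|^{2p}$ (or weighted versions thereof) is forced by terms like $\io\ue^2|\na\ve|^{2p-2}$, whose absorption is exactly what requires either the largeness condition on $\mu$ (Lemma \ref{l3.5}) or the $\eps$-dependent damping --- this is the content of Section \ref{sec:bdclasssol}, and it does not close uniformly in $\eps$ when $\mu$ is small. So your iteration is circular: better integrability of $\na\ve$ needs better integrability of $\ue$, and vice versa, and neither side can start.

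The paper's resolution is different and needs no $L^3$ bound: in the entropy identity that you yourself propose (testing the first equation by $\ln\ue$), the logistic term contributes the dissipation $-\mu\io\ue^2\ln\ue$, so the coupled functional of Lemma \ref{lem:energyfunctional} yields the space-time bound $\intnT\io\ue^2\ln(a\ue)\le C(T)$, i.e.\ \eqref{bd:u2logu}. This marginal, de la Vall\'ee Poussin-type improvement over $L^2$ is exactly what gives equi-integrability of $\{\ue^2\}$, and together with the a.e.\ convergence from the Aubin--Lions argument, Vitali's convergence theorem gives $\ue^2\to u^2$ in $L^1_{loc}$ (\eqref{conv:u2}); the same device disposes of the term created by the regularization. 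Two smaller points: (i) your regularization (saturating the sensitivity as $\frac{\ue}{1+\eps\ue}$) differs from the paper's (adding $-\eps\ue^2\ln(a\ue)$ to the first equation), so Lemma \ref{lem:ge.for.positive.eps.or.large.mu} does not literally apply to your approximate system, and its global solvability would need a separate (standard) argument; (ii) with your regularization the exact cancellation of the cross terms $\pm\chi\io\na\ue\cdot\na\ve$ in the coupled functional is lost and must be replaced by a Young-inequality absorption. Both of these are repairable; the missing $\ue^2\ln\ue$ idea is the essential one.
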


These solutions, too, stabilize toward $(\f{\kappa}{\mu },0)$ as $t\to\infty$, even though in a weaker sense than guaranteed by Theorem \ref{thm3} for classical solutions:

\begin{thm}\label{thm:weaksol-limit}
 Let $N\in\mathbb{N} $ and let $\Om\sub \mathbb{R} ^N$ be a bounded smooth domain. Let $\chi >0$, $\kappa>0$, $\mu >0$ and assume that $u_0$, $v_0$ satisfy \eqref{id}. Then for any $p\in[1,\infty)$ the weak solution $(u,v)$ to \eqref{a} that has been constructed during the proof of Theorem \ref{thm:weaksol} satisfies
\[
 \norm[\Lom p]{v(\cdot,t)}\to 0 \quad \text{ and } \int_t^{t+1} \norm[\Lom 2]{u(\cdot,s)-\f{\kappa}{\mu }} ds \to 0
\]
as $t\to \infty$.
\end{thm}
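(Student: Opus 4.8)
The plan is to read off the asymptotics from an entropy-type functional evaluated along the approximate solutions $(\ue,\ve)$ underlying the constructed weak solution, together with the elementary $L^2$-dissipation of the second equation. Writing $a:=\f{\kappa}{\mu}$, I would work with
\[
 \mathcal{E}_\eps(t):=\io\Big(\ue-a-a\ln\f{\ue}{a}\Big)\ge 0,
\]
which cannot be tested against the limit solution directly; hence all differential identities are to be derived at the level of the smooth, positive approximations, and only the resulting $\eps$-uniform, time-global estimates are passed to the limit by lower semicontinuity. (If the taxis or consumption terms are regularized in the construction, the corresponding corrections are handled analogously.)

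First I would collect the estimates for $v$. Testing the second equation in \eqref{a} by $\ve$ yields
\[
 \tfrac12\ddt\norm[\Lom2]{\ve(\cdot,t)}^2=-\norm[\Lom2]{\na\ve}^2-\io\ue\ve^2\le 0,
\]
so that $t\mapsto\norm[\Lom2]{\ve(\cdot,t)}^2$ is non-increasing and, after integration, $\intninf\big(\norm[\Lom2]{\na\ve}^2+\io\ue\ve^2\big)\le\tfrac12\norm[\Lom2]{v_0}^2$ uniformly in $\eps$; moreover the maximum principle gives $\norm[\Lom\infty]{\ve(\cdot,t)}\le\norm[\Lom\infty]{v_0}$. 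These facts survive the limit (the dissipation integrals by Fatou, the $L^\infty$-bound pointwise, and the monotonicity through the energy inequality satisfied by the constructed solution). Next comes the key spacetime bound on $u-a$. Differentiating $\mathcal{E}_\eps$ and using the logistic structure $(1-\tfrac{a}{\ue})\mu\ue(a-\ue)=-\mu(\ue-a)^2$ together with Young's inequality on the taxis contribution, $a\io\f{\na\ue\cdot\na\ve}{\ue}\le\tfrac a2\io\f{|\na\ue|^2}{\ue^2}+\tfrac a2\io|\na\ve|^2$, I obtain
\[
 \ddt\mathcal{E}_\eps+\tfrac a2\io\f{|\na\ue|^2}{\ue^2}+\mu\io(\ue-a)^2\le\tfrac a2\io|\na\ve|^2.
\]
Integrating over $(0,\infty)$, using $\mathcal{E}_\eps\ge0$, the uniform finiteness of $\mathcal{E}_\eps(0)$ (guaranteed since $u_0$ is continuous and strictly positive on $\Ombar$) and the $v$-estimate gives $\mu\intninf\io(\ue-a)^2\le C$ with $C$ independent of $\eps$.

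Lower semicontinuity along the approximating sequence then yields $\intninf\io(u-a)^2\le C$, and by Cauchy--Schwarz $\int_t^{t+1}\norm[\Lom2]{u(\cdot,s)-a}\,ds\le\big(\int_t^{t+1}\norm[\Lom2]{u(\cdot,s)-a}^2\,ds\big)^{1/2}\to0$ as $t\to\infty$, which is the second assertion. For the decay of $v$ I would work at the level of the limit solution and split
\[
 \int_t^{t+1}\io v^2=\f1a\int_t^{t+1}\io uv^2+\f1a\int_t^{t+1}\io(a-u)v^2.
\]
The first term tends to $0$ because $\intninf\io uv^2<\infty$, while the second is bounded by $\tfrac1a\norm[\Lom\infty]{v_0}^2\,|\Om|^{1/2}\int_t^{t+1}\norm[\Lom2]{u-a}\,ds\to0$ by the preceding step; hence $\int_t^{t+1}\norm[\Lom2]{v(\cdot,s)}^2\,ds\to0$. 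Combined with the (essential) monotonicity of $s\mapsto\norm[\Lom2]{v(\cdot,s)}^2$ this forces $\norm[\Lom2]{v(\cdot,t+1)}^2\le\int_t^{t+1}\norm[\Lom2]{v(\cdot,s)}^2\,ds\to0$, and interpolation against the uniform $L^\infty$-bound (together with Hölder for $p\le 2$) upgrades this to $\norm[\Lom p]{v(\cdot,t)}\to0$ for every $p\in[1,\infty)$.

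I expect the main obstacle to be the rigorous passage to the limit in the entropy inequality: the term $a\ln\ue$ forces one to secure uniform positivity and integrability so that $\mathcal{E}_\eps(0)$ stays finite, while the dissipation terms are only recovered through lower semicontinuity, so that care is needed to ensure that the products $\ue\ve^2$ and the gradient terms converge in the appropriate sense. A secondary difficulty is to transfer the monotonicity of $\norm[\Lom2]{v(\cdot,t)}^2$ (rather than merely its time averages) to the limit, since it is exactly this monotonicity that upgrades the averaged decay of $v$ to the genuine convergence $\norm[\Lom p]{v(\cdot,t)}\to0$ claimed in the statement.
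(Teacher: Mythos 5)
Your proposal is correct in substance, and it splits into a part that coincides with the paper's argument and a part that takes a genuinely different route. For the $u$-assertion you use essentially the paper's Lyapunov functional: up to additive constants, your $\mathcal{E}_\eps$ is the functional $\calF_{\eps}$ of Lemma \ref{lem:ddtF} (with your $a=\kappa/\mu$ the reciprocal of the paper's $a$) minus its quadratic part $\f{\kappa}{2\mu}\io\ve^2$. The paper includes that quadratic part precisely so that its dissipation $-\f{\kappa}{\mu}\io|\na \ve|^2$ absorbs the Young term $\f{\kappa}{2\mu}\io|\na\ve|^2$ produced by the taxis integral, which makes the inequality $\calF_\eps'+\mu\io\kl{\ue-\f{\kappa}{\mu}}^2\le0$ self-contained; you instead invoke the separately derived bound $\intninf\io|\na\ve|^2\le\f12\normm{L^2(\Om)}{v_0}^2$. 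Either organization yields the $\eps$-uniform bound \eqref{bd:uminlimit2}, whence \eqref{uminlimitinL2} by weak lower semicontinuity and the second claim by Cauchy--Schwarz, exactly as in Lemma \ref{lem:weaklimit}. Where you genuinely diverge is the decay of $v$: the paper proves $\norm[\Lom p]{\ve(\cdot,t)}\to0$ \emph{uniformly in} $\eps$ at the level of the smooth approximations (Lemma \ref{lem:vtozero}, via the mass identity $\f{\kappa}{\mu}\intnt\io\ve+\intnt\io(\ue-\f{\kappa}{\mu})\ve\le\io v_0$ and Cauchy--Schwarz) and then transfers this to $v$ merely by testing the weak-$*$ convergence \eqref{conv:vweakstar} against characteristic functions of $\Om\times(t,t+1)$, so that no structural property of the limit beyond weak-$*$ convergence is ever needed; you instead argue at the limit level, via the splitting $\io v^2=\f1a\io uv^2+\f1a\io(a-u)v^2$, the finiteness of $\intninf\io uv^2$, and monotonicity of $t\mapsto\norm[\Lom2]{v(\cdot,t)}^2$. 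Your route is viable and conceptually attractive (it exhibits the decay mechanism directly in the limit solution), but it is more delicate, for the reason you yourself flag.

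Concretely, the step that still needs justification is the appeal to ``the energy inequality satisfied by the constructed solution'': Definition \ref{def:weaksol} encodes no energy inequality and no monotonicity, so neither may be assumed for $(u,v)$ a priori. This gap can be closed with the paper's convergence statements. Since $\vej\to v$ a.e.\ in $\Om\times(0,\infty)$ by \eqref{conv:v}, Fubini and Fatou give, for a.e.\ $t$, $\norm[\Lom2]{v(\cdot,t)}^2\le\liminf_{j}\norm[\Lom2]{\vej(\cdot,t)}^2$; on the other hand, monotonicity of $s\mapsto\norm[\Lom2]{\vej(\cdot,s)}^2$ (valid for each fixed $\eps_j$, by your dissipation identity) yields $\norm[\Lom2]{\vej(\cdot,t)}^2\le\int_{t-1}^{t}\norm[\Lom2]{\vej(\cdot,s)}^2\,ds$, and the right-hand side converges to $\int_{t-1}^{t}\norm[\Lom2]{v(\cdot,s)}^2\,ds$ because $\vej\to v$ in $L^2(\Om\times(0,\infty))$ (established in the proof of Lemma \ref{lem:weaksol}). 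Combining, $\norm[\Lom2]{v(\cdot,t)}^2\le\int_{t-1}^{t}\norm[\Lom2]{v(\cdot,s)}^2\,ds$ for a.e.\ $t$, and the right-hand side tends to $0$ by your averaged decay; interpolation against the inherited $L^\infty$-bound then gives the $\Lom p$ statement (necessarily in an a.e.-in-$t$/essential sense, which is all that is meaningful for a weak solution). With this repair your proof is complete; what the paper's arrangement buys is that all quantitative decay is established where solutions are smooth, so the limit passage is soft and the conclusion is obtained directly in the $L^\infty((T,\infty);\Lom p)$ sense.
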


\begin{remark}
 Under the restriction $N=3$, the existence of global weak solutions that eventually become smooth and uniformly converge to $(\f{\kappa}{\mu },0)$ has been proven in \cite{lankeit_fluid}, where a coupled chemotaxis-fluid model is treated.
\end{remark}

{\textbf{Plan of the paper.}} In Section \ref{sec:prelim} we will prepare some general calculus inequalities.
In the following for some $a>0$ we will then consider
\begin{equation}\label{epssys}
  \begin{cases} \uet=\Delta \ue - \chi \nabla\cdot(\ue\nabla \ve) + \kappa\ue - \mu \ue^2 - \eps  \ue^2\ln  a\ue \\
  \vet=\Delta \ve - \ue\ve\\
  \delny \ue\amrand=\delny \ve\amrand=0\\
  \ue(\cdot,0)=u_0, \ve(\cdot,0)=v_0.
 \end{cases}
\end{equation}
For $\eps =0$, this system reduces to \eqref{a}; for $\eps \in(0,1)$ we will be able to derive global existence of solutions without any concern for the size of initial data and hence obtain a suitable stepping stone for the construction of weak solutions. Beginning the study of solutions to this system in Section \ref{sec:locex-andbasic} with a local existence result and elementary properties of the solutions, we will in Section \ref{sec:bdclasssol} consider a functional of the type $\io u^p+\io |\na v|^{2p}$ and finally, aided by estimates for the heat semigroup, obtain globally bounded solutions, thus proving Theorem \ref{thm1}.
In Section \ref{sec:stabilization} where $\kappa$ is assumed to be positive, we will let $a:=\frac{\mu }{\kappa}$ and employ the functional
\[
 \calF_{\eps }(t)=\io \ue(\cdot,t)-\frac{\kappa}{\mu }\io \ln\ue(\cdot,t) + \frac{\kappa}{2\mu }\io \ve^2(\cdot,t)
\]
in order to derive the stabilization result in Theorem \ref{thm3} and already prepare Theorem \ref{thm:weaksol-limit}. Section \ref{sec:weaksol}, finally, will be devoted to the construction of weak solutions to \eqref{a}, and to the proofs of Theorem \ref{thm:weaksol} and Theorem \ref{thm:weaksol-limit}.

\begin{remark}
 In \eqref{epssys}, the additional term $-\eps \ue^2\ln a\ue$ could be replaced by $-\eps \Phi(\ue)$ with some other continuous function $\Phi$ which satisfies: $\Phi(s)\to 0$ as $s\searrow 0$, $\frac{\Phi(s)}{s^2}\to \infty$ as $s\to\infty$ and, for the stabilization results in Section \ref{sec:stabilization}, $\Phi<0$ on $(0,\frac{\kappa}{\mu})$ as well as $\Phi>0$ on $(\frac{\kappa}{\mu},\infty)$. \\
 We will always let
 \begin{equation}\label{defa}
  a:=\begin{cases} \frac{\mu}{\kappa},& \text{if }\; \kappa>0\\ \mu&\text{if }\; \kappa\leq 0\end{cases}
 \end{equation}
 and note that the choice for the case $\kappa\leq 0$ was arbitrary and that in Sections \ref{sec:bdclasssol} and \ref{sec:weaksol}, the precise value of $a$ plays no important role.
\end{remark}

\textbf{Notation.} For solutions of PDEs we will use $\Tmax$ to denote their maximal time of existence (cf. also Lemma \ref{criterion}). Throughout the article we fix $N\in\mathbb{N}$ and a bounded, smooth domain $\Om\sub\mathbb{R}^N$.

\section{General preliminaries}\label{sec:prelim}
In this section we provide some estimates that are valid for all suitably regular functions and not only for solutions of the PDE under consideration.

\begin{lem}\label{lem:elementary.estimates}
 a) For any $c\in C^2(\Om)$:
\begin{equation}\label{eq:Delta.Hessian}
|\Delta c|^2\leq N|D^2 c|^2 \quad \text{throughout } \Om.
\end{equation}
 b) There are $C>0$ and $k>0$ such that every positive $c\in C^2(\Ombar)$ fulfilling $\delny c=0$ on $\dOm$ satisfies
\begin{equation}\label{eq:lembdrytermvi}
  -2\io \frac{|\Lap c|^2}{c} +\io \frac{|\na c|^2\Lap c}{c^2} \leq -k \io c|D^2\ln c|^2 -k \io\frac{|\na c|^4}{c^3} + C\io c.%-c \io w|\Lap \ln w|^2% or D^2 instead of \Lap
\end{equation}
\end{lem}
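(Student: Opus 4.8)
The plan is to establish part (a) first and then use it twice in part (b). For (a), since $\Lap c=\operatorname{tr}(D^2 c)=\sum_{i=1}^N c_{x_ix_i}$, the Cauchy--Schwarz inequality gives pointwise $(\Lap c)^2=\big(\sum_i c_{x_ix_i}\big)^2\leq N\sum_i c_{x_ix_i}^2\leq N\sum_{i,j}c_{x_ix_j}^2=N|D^2 c|^2$. For part (b) I would substitute $w:=\ln c$ (licit since $c>0$), so that $\na c=c\na w$, $\Lap c=c(\Lap w+|\na w|^2)$ and $\delny w=\f{\delny c}{c}=0$ on $\dOm$. The two terms on the left then read $\f{|\Lap c|^2}{c}=c(\Lap w+|\na w|^2)^2$ and $\f{|\na c|^2\Lap c}{c^2}=c|\na w|^2(\Lap w+|\na w|^2)$, and after expanding the whole left-hand side becomes
\[
 -2\io c(\Lap w)^2-3\io c\,\Lap w\,|\na w|^2-\io c|\na w|^4.
\]

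The first key step is an integration-by-parts identity for $\io c(\Lap w)^2$. Integrating by parts once (the boundary term vanishes because $\delny w=0$), then inserting the Bochner identity $\na w\cdot\na\Lap w=\tfrac12\Lap|\na w|^2-|D^2 w|^2$, and integrating the arising $\io c\,\Lap|\na w|^2$ by parts once more (another boundary term dies via $\delny c=0$), I obtain
\[
 \io c(\Lap w)^2=-\tfrac32\io c\,\Lap w\,|\na w|^2-\tfrac12\io c|\na w|^4+\io c|D^2 w|^2-\tfrac12\int_{\dOm}c\,\delny|\na w|^2.
\]
Substituting this back, the cross term $\io c\,\Lap w\,|\na w|^2$ and the term $\io c|\na w|^4$ cancel exactly, leaving the clean identity
\[
 -2\io\f{|\Lap c|^2}{c}+\io\f{|\na c|^2\Lap c}{c^2}=-2\io c|D^2\ln c|^2+\int_{\dOm}c\,\delny|\na\ln c|^2.
\]

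Two further ingredients finish the argument. First, the auxiliary estimate $\io\f{|\na c|^4}{c^3}=\io c|\na w|^4\leq(2+\sqrt N)^2\io c|D^2 w|^2$: rewriting $\io c|\na w|^4=\io|\na w|^2\,\na c\cdot\na w$ and integrating by parts (boundary term zero since $\delny w=0$) gives $\io c|\na w|^4=-2\io c\,(\na w)^{\!\top}D^2 w\,\na w-\io c|\na w|^2\Lap w$; bounding $|(\na w)^{\!\top}D^2 w\,\na w|\leq|D^2 w||\na w|^2$ and $|\na w|^2|\Lap w|\leq\sqrt N\,|D^2 w||\na w|^2$ (here part (a) enters) and applying Cauchy--Schwarz yields the claim. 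Second, I dispose of the boundary integral: since $\delny w=0$ on the smooth boundary, the curvature estimate $\delny|\na w|^2\leq 2\kappa_\Omega|\na w|^2$ on $\dOm$ holds with $\kappa_\Omega$ bounding the principal curvatures of $\dOm$, so $\int_{\dOm}c\,\delny|\na w|^2\leq 2\kappa_\Omega\int_{\dOm}\f{|\na c|^2}{c}$; extending the unit normal to a field $b$ and testing $\int_{\dOm}g=\io\na\cdot(gb)$ with $g=c|\na w|^2$ reduces this boundary integral to interior integrals of $c|\na w|^3$, $c|D^2 w||\na w|$ and $c|\na w|^2$, each of which is absorbed into $\eta\io c|D^2 w|^2+C_\eta\io c$ by Hölder's and Young's inequalities together with the auxiliary estimate. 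Thus $\int_{\dOm}c\,\delny|\na\ln c|^2\leq\eta\io c|D^2\ln c|^2+C_\eta\io c$ for every $\eta>0$.

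Combining the identity with this boundary bound, choosing $\eta$ small to keep a fixed negative multiple of $\io c|D^2\ln c|^2$, and finally splitting that multiple with the help of the auxiliary estimate to manufacture the required $-k\io\f{|\na c|^4}{c^3}$ term, I arrive at the assertion for a suitably small $k>0$ and a correspondingly large $C>0$. I expect the boundary term to be the main obstacle: its passage to interior quantities, via the curvature estimate, the trace identity and the interpolation against the good term $\io c|D^2\ln c|^2$, is the only place where the geometry of $\Om$ is used and where care is needed, whereas everything else reduces to algebra and integration by parts.
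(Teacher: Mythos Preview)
Your proof of (a) is correct and essentially identical to the paper's: both amount to Cauchy--Schwarz on the diagonal entries of the Hessian followed by the trivial inclusion $\sum_i c_{x_ix_i}^2\le\sum_{i,j}c_{x_ix_j}^2$.

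For (b) the paper gives no argument at all; it simply cites \cite[Lemma~2.7~vi)]{lankeit_fluid}. What you have written is in fact a complete and correct proof of that cited result. The substitution $w=\ln c$, the Bochner-type identity leading to
\[
 -2\io\frac{|\Lap c|^2}{c}+\io\frac{|\na c|^2\Lap c}{c^2}=-2\io c\,|D^2\ln c|^2+\int_{\dOm}c\,\delny|\na\ln c|^2,
\]
the functional inequality $\io c|\na w|^4\le(2+\sqrt N)^2\io c|D^2 w|^2$, and the treatment of the boundary term via the curvature bound $\delny|\na w|^2\le 2\kappa_\Omega|\na w|^2$ together with a trace-to-interior conversion are exactly the ingredients used in the reference. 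Your handling of the boundary contribution is the delicate part and it is done properly: the trace identity $\int_{\dOm}g=\io\na\cdot(gb)$ with an extension $b$ of the outward normal indeed produces only terms controllable by $\eta\io c|D^2\ln c|^2+C_\eta\io c$ after the Hölder/Young splittings you indicate, since each intermediate quantity $\io c|\na w|^k$, $k\in\{1,2,3\}$, interpolates between $\io c$ and $\io c|\na w|^4$, and the latter is dominated by the Hessian term via your auxiliary estimate. So nothing is missing; you have supplied what the paper merely outsourced.
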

\begin{proof}
a) Straightforward calculations yield
 \begin{align*}
  \kl{\sum_{i=1}^N c_{x_ix_i}}^2=\sum_{i,j=1}^N c_{x_ix_i}c_{x_jx_j}\leq \sum_{i,j=1}^N \kl{\frac12 c_{x_ix_i}^2 + \frac12 c_{x_jx_j}^2} %= \frac N2\sum_{i=1}^N c_{x_ix_i}^2+\frac N2\sum_{j=1}^N c_{x_jx_j}^2
 = N\sum_{i=1}^N c_{x_ix_i}^2 \leq N\sum_{i,j=1}^N c_{x_ix_j}^2.
 \end{align*}
b) This is \cite[Lemma 2.7 vi)]{lankeit_fluid}.
\end{proof}

Let us now derive the following interpolation inequality on which we will rely in obtaining an estimate for $\io u^p+\io |\na v|^{2p}$ in Section \ref{sec:bdclasssol}
\begin{lem}\label{l_interpolation}
%Suppose that $\Omega\subset \mathbb{R}^N$ is a bounded domain with
%smooth boundary.
Let $q\in[1,\infty)$. Then for any $c\in C^2(\Ombar )$
satisfying $c\frac{\partial c}{\partial \nu}=0$ on $\partial
\Omega$, the inequality
\begin{eqnarray}\label{inter0}
\|\nabla c\|^{2q+2}_{L^{2q+2}(\Omega)} \le
2(4q^2+N)\|c\|^2_{L^{\infty}} \big\||\nabla
c|^{q-1}D^2c\big\|^{2}_{L^2(\Omega)}
\end{eqnarray}
holds, where $D^2c$ denotes the Hessian of $c$.
\end{lem}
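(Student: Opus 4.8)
**The plan is to prove the gradient bound via integration by parts, turning $\|\nabla c\|_{L^{2q+2}}^{2q+2}$ into an integral involving the Hessian.**

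The starting point is to write $\|\nabla c\|_{L^{2q+2}(\Omega)}^{2q+2} = \io |\nabla c|^{2q+2} = \io |\nabla c|^{2q}\,|\nabla c|^2 = \io |\nabla c|^{2q}\,\nabla c\cdot\nabla c$ and integrate by parts, moving one gradient off the last factor. Concretely, I would integrate against $\nabla\cdot(|\nabla c|^{2q}\nabla c)$, so that
\[
 \io |\nabla c|^{2q+2} = -\io c\,\nabla\cdot\bigl(|\nabla c|^{2q}\nabla c\bigr) + \int_{\dOm} c\,|\nabla c|^{2q}\,\delny c.
\]
The boundary integral vanishes precisely because the hypothesis $c\,\delny c = 0$ on $\dOm$ forces $c\,|\nabla c|^{2q}\,\delny c = c\,\delny c\cdot|\nabla c|^{2q}=0$ there. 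Expanding the divergence gives $\nabla\cdot(|\nabla c|^{2q}\nabla c) = |\nabla c|^{2q}\Lap c + 2q\,|\nabla c|^{2q-2}\,(D^2 c\,\nabla c)\cdot\nabla c$, so that the volume term becomes $-\io c\,|\nabla c|^{2q}\Lap c - 2q\io c\,|\nabla c|^{2q-2}(D^2c\,\nabla c)\cdot\nabla c$.

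The next step is to bound the two resulting integrals by $\||\nabla c|^{q-1}D^2 c\|_{L^2}$ times a factor controlled by $\|c\|_{L^\infty}$. I would pull out $|c|\le\|c\|_{L^\infty}$ pointwise, then apply Cauchy--Schwarz, pairing each integrand with $|\nabla c|^{q+1}$ (whose square reconstitutes the quantity $\io|\nabla c|^{2q+2}$ on the left). For the first term I use $|\Lap c|\le\sqrt N\,|D^2c|$ from part a) of Lemma \ref{lem:elementary.estimates}, obtaining a factor $\sqrt N\,\||\nabla c|^{q-1}D^2c\|_{L^2}$ after writing $|\nabla c|^{2q}|\Lap c|=|\nabla c|^{q+1}\cdot|\nabla c|^{q-1}|\Lap c|$. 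For the second term, $|(D^2c\,\nabla c)\cdot\nabla c|\le|D^2c|\,|\nabla c|^2$ gives the factor $2q\,\||\nabla c|^{q-1}D^2c\|_{L^2}$ by the analogous splitting. Summing, both contributions carry the common factor $\|c\|_{L^\infty}\,\bigl(\io|\nabla c|^{2q+2}\bigr)^{1/2}\,\||\nabla c|^{q-1}D^2c\|_{L^2}$ with coefficient $(\sqrt N + 2q)$.

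Collecting everything yields $\io|\nabla c|^{2q+2}\le(\sqrt N+2q)\,\|c\|_{L^\infty}\bigl(\io|\nabla c|^{2q+2}\bigr)^{1/2}\||\nabla c|^{q-1}D^2c\|_{L^2}$. Dividing by $\bigl(\io|\nabla c|^{2q+2}\bigr)^{1/2}$ (the inequality is trivial if this vanishes) and squaring gives $\io|\nabla c|^{2q+2}\le(\sqrt N+2q)^2\|c\|_{L^\infty}^2\||\nabla c|^{q-1}D^2c\|_{L^2}^2$, and the elementary bound $(\sqrt N+2q)^2\le 2(N+4q^2)$ produces the stated constant $2(4q^2+N)$. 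I do not expect a genuine obstacle here; the only point requiring care is the bookkeeping of exponents in the Cauchy--Schwarz splitting so that exactly $|\nabla c|^{q+1}$ is separated from $|\nabla c|^{q-1}D^2c$, and the justification that the integration by parts is licit, which needs $c\in C^2(\Ombar)$ together with the boundary condition $c\,\delny c=0$ to annihilate the boundary term.
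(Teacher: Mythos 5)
Your proposal is correct and takes essentially the same route as the paper: the identical integration by parts (with the boundary term killed by $c\,\delny c=0$), the same decomposition into the $\Lap c$-term and the $2q\,(D^2c\,\nabla c)\cdot\nabla c$-term, and the same use of $|\Lap c|^2\le N|D^2c|^2$ from Lemma \ref{lem:elementary.estimates}~a). The only difference is cosmetic: you conclude via Cauchy--Schwarz followed by division by $\bigl(\int_\Omega|\nabla c|^{2q+2}\bigr)^{1/2}$ and squaring, whereas the paper uses Young's inequality to absorb $\tfrac12\int_\Omega|\nabla c|^{2q+2}$ into the left-hand side; your variant even yields the marginally sharper constant $(\sqrt N+2q)^2\le 2(4q^2+N)$.
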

\begin{proof} Since $c\frac{\partial c}{\partial \nu}=0$ on
$\partial \Omega$, an integration by parts yields
$$\|\nabla c\|^{2q+2}_{L^{2q+2}(\Omega)}=-\int_{\Omega}c|\nabla c|^{2q}\Delta c-2q\int_{\Omega}c |\nabla c|^{2q-2} \nabla c\cdot (D^2 c\cdot \nabla c).$$
Using Young's inequality and \eqref{eq:Delta.Hessian} we can estimate
\begin{eqnarray}\label{inter1}
\Big|-\int_{\Omega}c|\nabla c|^{2q}\Delta c\Big| &\le &
\frac{1}{4}\int_{\Omega}|\nabla
c|^{2q+2}+\int_{\Omega}c^2|\nabla
c|^{2q-2}|\Delta c|^2\nn\\
 &\le&
\frac{1}{4}\int_{\Omega}|\nabla
c|^{2q+2}+N\|c\|^2_{L^{\infty}(\Omega)}\int_{\Omega}|\nabla
c|^{2q-2}|D^2 c|^2.
\end{eqnarray}
Likewise, we see that
\begin{eqnarray}\label{inter2}
\Big|-2q\int_{\Omega}c |\nabla c|^{2q-2} \nabla c\cdot (D^2 c\cdot
\nabla c)\Big| &\le &
\frac{1}{4}\int_{\Omega}|\nabla
c|^{2q+2}+4q^2\|c\|^2_{L^{\infty}(\Omega)}\int_{\Omega}|\nabla
c|^{2q-2}|D^2 c|^2.
\end{eqnarray}
In consequence, \eqref{inter1} and \eqref{inter2} prove
\eqref{inter0}. \qquad
\end{proof}

\section{Local existence and basic properties of solutions}\label{sec:locex-andbasic}
We first recall a result on local solvability of \eqref{epssys}:
\begin{lem}\label{criterion}
Let $u_0$, $v_0$ satisfy \eqref{id}, let $\kappa\in\mathbb{R} $,
$\mu >0$, $\chi >0$ and $q>N$. Then for any $\eps
\in[0,1)$ there exist $T_{max}\in (0,\infty]$ and unique classical
solution $(\ue,\ve)$ of system \eqref{epssys} with $a$ as in \eqref{defa} in
$\Omega\times(0,T_{max})$ such that
\begin{eqnarray*}
&&\ue\in C^{0}\big(\Ombar \times[0,T_{\rm max})\big)\cap
C^{2,1}\big(\Ombar \times(0,T_{\rm max})\big),\\
&&\ve\in C^{0}\big(\Ombar \times[0,T_{\rm max})\big)\cap
C^{2,1}\big(\Ombar \times(0,T_{\rm max})\big).
\end{eqnarray*}
Moreover, we have $\ue> 0$ and $\ve > 0$ in $\Ombar \times
[0, T_{\max})$, and
\begin{equation}\label{extcrit}
\mathrm{if} \,\,\, T_{\rm max}<\infty,\,\,\, \mathrm{then}
\,\,\,\limsup_{t\nearrow\Tmax}\kl{\|\ue(\cdot,t)\|_{L^{\infty}(\Omega)}+\|\ve(\cdot,t)\|_{W^{1,q}(\Omega)}}=\infty.
\end{equation}
\end{lem}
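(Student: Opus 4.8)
The plan is to recast \eqref{epssys} as a fixed-point problem via the variation-of-constants formulation and to invoke the smoothing properties of the Neumann heat semigroup $\{e^{t\Lap}\}_{t\ge0}$ on $\Om$. For a small $T>0$ I would work in the Banach space
\[
 X_T := C^0\big([0,T];C^0(\Ombar)\big)\times C^0\big([0,T];W^{1,q}(\Om)\big)
\]
and define a map $\Psi=(\Psi_1,\Psi_2)$ on a closed ball of $X_T$ by
\begin{align*}
 \Psi_1(u,v)(t)&= e^{t\Lap}u_0 -\chi\intnt e^{(t-s)\Lap}\na\cdot\big(u(s)\na v(s)\big)\,ds + \intnt e^{(t-s)\Lap}\big(\kappa u-\mu u^2-\eps g(u)\big)(s)\,ds,\\
 \Psi_2(u,v)(t)&= e^{t\Lap}v_0 -\intnt e^{(t-s)\Lap}\big(u(s)v(s)\big)\,ds,
\end{align*}
where $g$ is a globally defined, locally Lipschitz extension of $s\mapsto s^2\ln as$ (for instance $g(s)=s^2\ln as$ for $s>0$ and $g(s)=0$ for $s\le0$, which is $C^1$ with vanishing value and derivative at the origin, since $s^2\ln as\to0$ as $s\searrow0$). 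The goal is to show that, for $T$ small enough depending only on $\|u_0\|_{C^0(\Ombar)}$ and $\|v_0\|_{W^{1,q}(\Om)}$, the map $\Psi$ takes a suitable ball into itself and acts as a contraction there; its unique fixed point is then the mild solution, and the original nonlinearity is recovered a posteriori once positivity of $\ue$ is established.

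The estimates to carry out are standard for chemotaxis systems. For the chemotaxis term I would use $\|e^{\tau\Lap}\na\cdot\varphi\|_{L^\infty}\le C\,\tau^{-\frac12-\frac{N}{2q}}\|\varphi\|_{L^q}$ together with the bound $\|u\na v\|_{L^q}\le\|u\|_{L^\infty}\|\na v\|_{L^q}$; the hypothesis $q>N$ is exactly what keeps the exponent $\frac12+\frac{N}{2q}$ below $1$, so that the time integral converges. For the gradient component of $\Psi_2$ I would combine $\|\na e^{\tau\Lap}\psi\|_{L^q}\le C\,\tau^{-\frac12}\|\psi\|_{L^q}$ with the boundedness $\|e^{t\Lap}v_0\|_{W^{1,q}}\le C\|v_0\|_{W^{1,q}}$. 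The terms $\kappa u-\mu u^2$, $uv$ and the absorption term $\eps g(u)$ enter only as lower-order reactions, for which the local Lipschitz continuity of the corresponding Nemytskii operators on bounded sets of $C^0(\Ombar)$, together with the elementary contractivity of $e^{t\Lap}$, suffices.

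Positivity and regularity come next. Since $v_0>0$ and $\ve$ solves $\vet=\Lap\ve-\ue\ve$ under homogeneous Neumann conditions, the parabolic maximum principle yields $\ve>0$ in $\Ombar\times[0,\Tmax)$; rewriting the $\ue$-equation as a linear parabolic equation for $\ue$ with bounded coefficients and noting that its reaction vanishes at $\ue=0$, the constant $0$ is a subsolution, so the strong maximum principle gives $\ue>0$. In particular $\ln a\ue$ is meaningful and $g(\ue)=\ue^2\ln a\ue$ throughout, so the fixed point indeed solves \eqref{epssys}. Standard parabolic $L^p$- and Schauder-regularity then upgrade the mild solution to the asserted regularity $\ue,\ve\in C^0(\Ombar\times[0,\Tmax))\cap C^{2,1}(\Ombar\times(0,\Tmax))$ by a bootstrap argument.

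Finally, the extensibility criterion \eqref{extcrit} follows from the usual continuation principle: the length of the existence interval produced by the contraction argument can be chosen to depend only on $\|\ue(\cdot,t_0)\|_{L^\infty(\Om)}$ and $\|\ve(\cdot,t_0)\|_{W^{1,q}(\Om)}$, so that if these quantities stayed bounded as $t\nearrow\Tmax<\infty$, one could restart the construction from a time close to $\Tmax$ and prolong the solution beyond $\Tmax$, contradicting maximality. I expect the main obstacle to be the bookkeeping for the superquadratic absorption term $-\eps\ue^2\ln a\ue$: one has to confirm that its extension $g$ is locally Lipschitz on bounded sets of $C^0(\Ombar)$ and, more importantly, that the contraction radius and the time $T$ can be chosen uniformly in terms of the data norms alone, which is exactly the uniformity that makes the continuation argument yielding \eqref{extcrit} valid.
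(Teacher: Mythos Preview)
Your proposal is correct and follows the standard Banach fixed-point approach in $C^0([0,T];C^0(\Ombar))\times C^0([0,T];W^{1,q}(\Om))$ via the variation-of-constants representation, heat-semigroup smoothing, maximum-principle positivity, parabolic bootstrap, and the usual continuation argument; this is precisely the machinery behind \cite[Lemma~2.1]{win_ctfluid}, which the paper simply cites (together with \cite[Lemma~3.1]{win_ksns_logsource} for the minor adaptation needed when $\eps>0$). In other words, the paper does not give an independent proof but defers to exactly the argument you have outlined.
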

\begin{proof}
 Apart from minor adaptions necessary if $\eps >0$ (see also \cite[Lemma 3.1]{win_ksns_logsource}), this lemma is contained in \cite[Lemma 2.1]{win_ctfluid}.
\end{proof}

Even thought the total mass is not conserved, an upper bound for it can be obtained easily:

\begin{lem}\label{lu1}
Let $u_0$, $v_0$ satisfy \eqref{id}, let $\kappa\in\mathbb{R} $, $\mu >0$, $\chi >0$. Then for any $\eps \in[0,\infty)$ the solution of \eqref{epssys} with $a$ as in \eqref{defa} satisfies
 \begin{eqnarray}\label{2.1}
 \iO \ue(x,t) dx\le \max\Big\{\f{\kappa|\Om|}{2\mu }+\sqrt{\kl{\f{\kappa_+|\Om|}{2\mu }}^2+\eps \f{|\Om|}{2a^2e\mu }},\io u_0 \Big\}=:m_{\eps } \quad\mathrm{ for\,\,\, all }\quad
t\in(0,T_{\rm max}).
\end{eqnarray}
\end{lem}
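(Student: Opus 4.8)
The plan is to integrate the first equation of \eqref{epssys} over $\Omega$ and turn the resulting ODE for the total mass $y(t):=\io\ue(x,t)\,dx$ into a differential inequality whose right-hand side is a downward-opening quadratic in $y$. First I would integrate: using the homogeneous Neumann boundary condition the diffusion term $\io\Lap\ue$ and the divergence term $-\chi\io\na\cdot(\ue\na\ve)$ both vanish, leaving
\[
 \ddt\io\ue = \kappa\io\ue - \mu\io\ue^2 - \eps\io\ue^2\ln a\ue.
\]
The key is to control the two $\ue^2$-type terms from below by something quadratic in $y$. For the genuine logistic term, Cauchy--Schwarz (or Jensen) gives $\io\ue^2\ge\frac1{|\Om|}\kl{\io\ue}^2=\frac{y^2}{|\Om|}$. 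For the extra term $-\eps\ue^2\ln a\ue$, I would exploit that the function $s\mapsto s^2\ln(as)$ has a finite negative minimum on $(0,\infty)$: elementary calculus shows $\min_{s>0}s^2\ln(as)=-\frac{1}{2a^2e}$, attained at $s=\frac1{a\sqrt e}$, hence pointwise $-\eps\ue^2\ln a\ue\le\frac{\eps}{2a^2e}$ and therefore $-\eps\io\ue^2\ln a\ue\le\frac{\eps|\Om|}{2a^2e}$.

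Combining these, I obtain the autonomous differential inequality
\[
 y'(t)\le \kappa y(t)-\frac{\mu}{|\Om|}\,y(t)^2+\frac{\eps|\Om|}{2a^2e}.
\]
The right-hand side is a concave downward parabola in $y$; it is negative once $y$ exceeds its larger root $y_+$, so $y$ cannot increase past $\max\{y_+,y(0)\}$. Concretely, I would invoke the standard comparison/ODE-barrier argument: if ever $y(t)>\max\{y_+,y(0)\}$ then $y'<0$ there, which together with $y(0)\le\max\{\cdots\}$ forces $y(t)\le\max\{y_+,y(0)\}$ for all $t\in(0,\Tmax)$. Solving $\kappa y-\frac{\mu}{|\Om|}y^2+\frac{\eps|\Om|}{2a^2e}=0$ by the quadratic formula yields
\[
 y_+=\frac{\kappa|\Om|}{2\mu}+\sqrt{\kl{\frac{\kappa|\Om|}{2\mu}}^2+\frac{\eps|\Om|^2}{2a^2e\mu}},
\]
and matching the stated bound requires only replacing $\kappa$ by $\kappa_+$ under the square root, which is legitimate since when $\kappa\le0$ the square-root term is only enlarged while the leading term $\frac{\kappa|\Om|}{2\mu}$ is unchanged (and in that case the bound remains valid, indeed is even weaker than what one could prove). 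This reproduces \eqref{2.1} up to the discrepancy of a factor $|\Om|$ inside the root, which I would reconcile by tracking the normalization carefully.

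The only genuinely nontrivial step is the sharp pointwise lower bound for the logistic-type correction, i.e.\ computing $\min_{s>0}s^2\ln(as)$; everything else is the textbook "integrate, apply Jensen, compare with the ODE barrier" routine. I do not expect real difficulty here, but I would double-check two points: that the constant $\frac{1}{2a^2e}$ is reproduced exactly (the minimizer and value depend on $a$ in precisely the way appearing in \eqref{2.1}), and that the case distinction in the definition \eqref{defa} of $a$ does not interfere — since the bound is stated uniformly with $\kappa_+$ and the value of $a$ enters only through the benign additive term, the argument goes through for every $\eps\in[0,\infty)$ and both sign regimes of $\kappa$ without change.
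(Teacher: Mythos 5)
Your proposal follows exactly the paper's own proof, which is a three-line sketch of precisely your argument: the pointwise bound $s^2\ln(as)\geq-\frac{1}{2a^2e}$, integration of the first equation over $\Om$, H\"older's (Jensen's) inequality to get $\io\ue^2\geq\frac{1}{|\Om|}\bigl(\io\ue\bigr)^2$, and an ODI-comparison argument. Your derivation of the differential inequality and of its larger root is correct.

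However, the two loose ends in your final ``matching'' step deserve a sharper verdict than the hopeful gloss you give them. First, the factor-$|\Om|$ discrepancy you noticed is real and cannot be ``reconciled by tracking the normalization carefully'': the ODI barrier is
\[
y_+=\frac{\kappa|\Om|}{2\mu}+\sqrt{\Bigl(\frac{\kappa|\Om|}{2\mu}\Bigr)^2+\frac{\eps|\Om|^2}{2a^2e\mu}},
\]
with $|\Om|^2$ under the root, exactly as you computed; the constant stated in \eqref{2.1} (with $|\Om|$) is smaller whenever $|\Om|>1$, so it is not what this argument -- the paper's terse proof included -- actually yields, and is best regarded as a typo in the lemma's statement. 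Second, your justification of the substitution $\kappa\mapsto\kappa_+$ under the square root is backwards: for $\kappa<0$ one has $\kappa_+^2=0\leq\kappa^2$, so this substitution \emph{shrinks} the square-root term rather than enlarging it, i.e.\ the stated bound with $\kappa_+$ is \emph{stronger} than the barrier $y_+$ furnished by the comparison argument. The substitution is vacuous for $\kappa\geq0$; for $\kappa<0$ with $\eps=0$ the claim still holds because both the stated entry and the true barrier $0$ lie below $\io u_0>0$; but for $\kappa<0$ and $\eps>0$ the inequality as stated simply does not follow from this proof (yours or the paper's). Both blemishes concern the precise form of $m_\eps$ rather than your method, and they are harmless for the paper's later applications, where only finiteness of $m_\eps$ uniformly in $\eps\in[0,1)$ is used; a correct write-up should state the bound with $\kappa$ (not $\kappa_+$) and $\eps|\Om|^2/(2a^2e\mu)$ under the root.
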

\begin{proof}
 Because $s^2\ln(as)\geq -\frac1{2a^2e}$ for all $s>0$, integrating the first equation in \eqref{epssys} over $\Omega$ and applying Hölder's inequality shows that
\[
 \ddt \io \ue\leq \kappa\io \ue - \frac{\mu }{|\Om|}\kl{\io \ue}^2+\frac{\eps |\Om|}{2a^2e}\quad \text{ on } (0,\Tmax)
\]
 and the claim results from an ODI-comparison argument.
\end{proof}

For the second component, even uniform boundedness can be deduced instantly:

\begin{lem}\label{lem:normvdecreases}
Let $u_0$, $v_0$ satisfy \eqref{id}, let $\kappa\in\mathbb{R} $, $\mu >0$, $\chi >0$.
Then for any $\eps \in[0,1)$ the solution of \eqref{epssys} with $a$ as in \eqref{defa} satisfies
 \begin{eqnarray}\label{2.2}
 \|\ve(\cdot,t)\|_{L^{\infty}(\Omega)}\le \|v_0\|_{L^{\infty}(\Omega)} \qquad\mathrm{ for\,\,\, all }\quad
t\in(0,T_{\rm max})
\end{eqnarray}
 and
\[
 (0,\Tmax)\ni t\mapsto \norm[\Lom\infty]{\ve(\cdot,t)}
\]
is monotone decreasing.
\end{lem}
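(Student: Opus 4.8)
The plan is to exploit the favorable sign of the zeroth-order term $-\ue\ve$ in the second equation of \eqref{epssys} by a parabolic comparison argument. By Lemma \ref{criterion} we have $\ue>0$ and $\ve>0$ throughout $\Ombar\times[0,\Tmax)$, so the second equation may be read as $\vet-\Lap\ve+\ue\ve=0$ with the nonnegative, continuous zeroth-order coefficient $\ue$, subject to homogeneous Neumann conditions.

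First I would exhibit a constant supersolution. The function $\overline v\equiv\norm[\Lom\infty]{v_0}$ satisfies $\overline v_t-\Lap\overline v+\ue\overline v=\ue\norm[\Lom\infty]{v_0}\ge 0$, matches the boundary condition $\delny\overline v=0$ on $\dOm$, and dominates the initial datum, $\overline v\ge v_0=\ve(\cdot,0)$. Applying the comparison principle for the linear parabolic operator $\partial_t-\Lap+\ue$ on $\Om\times(0,T)$ for each $T<\Tmax$ (on which $\ue$ is bounded and continuous) and letting $T\nearrow\Tmax$ then yields $\ve(\cdot,t)\le\norm[\Lom\infty]{v_0}$ on $(0,\Tmax)$, which is exactly \eqref{2.2}.

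For the asserted monotonicity I would repeat the identical reasoning with the initial time shifted to an arbitrary $s\in(0,\Tmax)$: the constant $\norm[\Lom\infty]{\ve(\cdot,s)}$ is again a supersolution on $(s,\Tmax)$ that dominates $\ve(\cdot,s)$, whence $\ve(\cdot,t)\le\norm[\Lom\infty]{\ve(\cdot,s)}$ for all $t\in[s,\Tmax)$. Taking the supremum over $x\in\Om$ gives $\norm[\Lom\infty]{\ve(\cdot,t)}\le\norm[\Lom\infty]{\ve(\cdot,s)}$ whenever $t\ge s$, i.e. the claimed decrease of $t\mapsto\norm[\Lom\infty]{\ve(\cdot,t)}$.

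No substantial obstacle is anticipated; the only points requiring care are the invocation of $\ue>0$ from Lemma \ref{criterion} and the validity of the comparison principle under Neumann boundary conditions. Should one prefer to avoid the comparison principle outright, an equivalent energy argument is available: testing the $\ve$-equation against $(\ve-M)_+$ with $M:=\norm[\Lom\infty]{v_0}$ and using $\io(\ve-M)_+\Lap\ve=-\io|\na(\ve-M)_+|^2\le 0$ together with $-\io(\ve-M)_+\ue\ve\le 0$ shows $\ddt\io(\ve-M)_+^2\le 0$; since this quantity vanishes at $t=0$, it remains zero, giving \eqref{2.2}, and replacing $M$ by $\norm[\Lom\infty]{\ve(\cdot,s)}$ and the initial time by $s$ reproduces the monotonicity as above.
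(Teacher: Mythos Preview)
Your argument is correct and is precisely the approach the paper takes: its entire proof reads ``This is a consequence of the maximum principle and the nonnegativity of the solution,'' and you have simply spelled out the comparison with the constant supersolution $\norm[\Lom\infty]{\ve(\cdot,s)}$ that this sentence encodes. The alternative energy argument with $(\ve-M)_+$ is also fine and leads to the same conclusion.
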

\begin{proof}
 This is a consequence of the maximum principle and the nonnegativity of the solution.
\end{proof}

Also the gradient of $v$ can be controlled in an $L^2(\Om)$-sense:
\begin{lem}\label{ltv2} Let $u_0$, $v_0$ satisfy \eqref{id}, let $\kappa\in\mathbb{R} $, $\mu >0$, $\chi >0$.
There exists a positive constant $M$ such that for all $\eps \in[0,1)$ the solution of \eqref{epssys} with $a$ as in \eqref{defa} satisfies
\begin{eqnarray}\label{2.tv2}
\int_{\Omega}|\nabla
\ve(\cdot,t)|^2\le M
\qquad\mathrm{ for\,\,\, all }\quad t\in(0,T_{\rm max}).
\end{eqnarray}
\end{lem}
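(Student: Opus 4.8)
The plan is to test the second equation in \eqref{epssys} against $-\Lap\ve$ and to exploit the resulting dissipation $\io|\Lap\ve|^2$ together with the uniform $L^\infty$-bound $\norm[\Lom\infty]{\ve}\le\norm[\Lom\infty]{v_0}=:V$ furnished by Lemma \ref{lem:normvdecreases}. Writing $y(t):=\frac12\io|\na\ve(\cdot,t)|^2$ and integrating by parts (the boundary term vanishes because $\delny\ve=0$), I obtain
\[
 \ddt y = \io \na\ve\cd\na\vet = -\io\Lap\ve\,\vet = -\io|\Lap\ve|^2 + \io\ue\ve\Lap\ve,
\]
and the cross term is controlled by Young's inequality together with the bound on $\ve$ via
\[
 \io\ue\ve\Lap\ve \le \frac12\io|\Lap\ve|^2 + \frac{V^2}{2}\io\ue^2.
\]

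Next I would turn the surviving dissipation into a zeroth-order damping. Since $\io|\na\ve|^2=-\io\ve\Lap\ve\le\frac12\io|\Lap\ve|^2+\frac12\io\ve^2$, we have $\frac12\io|\Lap\ve|^2\ge\io|\na\ve|^2-\frac{V^2|\Om|}{2}$, which, inserted into the two estimates above, produces the differential inequality
\[
 \ddt y + 2y \le \frac{V^2|\Om|}{2} + \frac{V^2}{2}\io\ue^2 \qquad\text{on }(0,\Tmax).
\]
The quantity $\io\ue^2$ is not bounded pointwise in time, but the logistic dissipation yields a uniform space-time bound: proceeding as in the proof of Lemma \ref{lu1} but keeping the term $-\mu\io\ue^2$, one gets $\ddt\io\ue+\mu\io\ue^2\le\kappa_+ m_\eps+\frac{|\Om|}{2a^2e}$, so that integration over $(t,t+1)$ and \eqref{2.1} give $\int_t^{t+1}\io\ue^2\le C_0$ with $C_0$ independent of $t$ and of $\eps\in[0,1)$.

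Finally I would close the argument by variation of constants. From the differential inequality,
\[
 y(t)\le y(0)e^{-2t}+\int_0^t e^{-2(t-s)}\kl{\frac{V^2|\Om|}{2}+\frac{V^2}{2}\io\ue^2(\cdot,s)}\,ds;
\]
splitting $(0,t)$ into unit intervals and applying the space-time bound on each of them, the geometric series $\sum_{k\ge0}e^{-2k}$ converges and bounds the integral uniformly, while $y(0)=\frac12\io|\na v_0|^2<\infty$ by \eqref{id}. This yields a bound on $y(t)$ uniform in $t\in(0,\Tmax)$ and in $\eps\in[0,1)$, hence the claimed constant $M$. The main obstacle is exactly the cross term $\io\ue\ve\Lap\ve$: because only a space-time, not a pointwise-in-time, $L^2$-bound on $\ue$ is at hand, the damping $+2y$ in the differential inequality, combined with the summation over unit time intervals, is what converts the time-integrated control of $\io\ue^2$ into the desired pointwise-in-time bound for $\io|\na\ve|^2$.
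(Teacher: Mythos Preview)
Your proof is correct, but it follows a different route from the paper's argument. The paper does not work with $y=\tfrac12\io|\na\ve|^2$ alone; instead it adds the quantity $\tfrac{\norm[\Lom\infty]{v_0}^2}{\mu}\io\ue$ to $\io|\na\ve|^2$ and derives an ODI for the combined functional. The point of this combination is that the dangerous term $\norm[\Lom\infty]{v_0}^2\io\ue^2$ arising from the cross term is cancelled \emph{pointwise in time} by the logistic contribution $-\mu\io\ue^2$ in $\ddt\io\ue$, so one ends up with an inequality of the form $z'+z\le C$ with a constant right-hand side (depending only on $m_1$ and $\norm[\Lom\infty]{v_0}$), and a single ODE comparison finishes the proof.

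Your approach uses the logistic term in a different way: you first extract from it the uniform space-time bound $\int_t^{t+1}\io\ue^2\le C_0$, and then feed this into the ODI $y'+2y\le C+\tfrac{V^2}{2}\io\ue^2$ via the Duhamel formula and a geometric-series summation over unit intervals. This is a perfectly standard and valid technique. The paper's combined-functional trick is slightly slicker once one sees the right linear combination, whereas your argument is more modular and does not require guessing any coupling constant; it would also work in situations where no exact cancellation is available. One minor point: when you claim $C_0$ is independent of $\eps\in[0,1)$, you are implicitly using $m_\eps\le m_1$, which holds since $m_\eps$ is increasing in $\eps$; this is worth stating explicitly.
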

\begin{proof} Integration by parts and the Young inequality result
in
\begin{eqnarray}\label{2.tv1}
\frac{d}{dt}\int_{\Omega}|\nabla \ve|^2&=& 2\int_{\Omega} \nabla
\ve\cdot \nabla (\Delta \ve-\ue\ve)\nn \\
&\le& -2\int_{\Omega}|\Delta \ve|^2 -2\int_{\Omega}|\nabla
\ve|^2+2\int_{\Omega} \ve(\ue-1)\Delta \ve \nn\\
&\le& -\int_{\Omega}|\Delta \ve|^2 -2\int_{\Omega}|\nabla \ve|^2
+\int_{\Omega} \ve^2(\ue-1)^2\nn\\
&\le& -\int_{\Omega}|\Delta \ve|^2 -2\int_{\Omega}|\nabla
\ve|^2+\|v_0\|_{L^{\infty}(\Omega)}^2\int_{\Omega}
\ue^2+2\|v_0\|_{L^{\infty}(\Omega)}^2\int_{\Omega}
\ue+\|v_0\|_{L^{\infty}(\Omega)}^2
\end{eqnarray}
on $(0,\Tmax)$. Furthermore,
\begin{eqnarray}\label{2.tv3}
\frac{\|v_0\|_{L^{\infty}(\Omega)}^2}{\mu}\frac{d}{dt}\int_{\Omega}
\ue\le \frac{\kappa_{+}\|v_0\|_{L^{\infty}(\Omega)}^2}{\mu}\int_{\Omega}
\ue-\|v_0\|_{L^{\infty}(\Omega)}^2\int_{\Omega} \ue^2-\frac{\eps \norm[\Lom\infty]{v_0}^2}{\mu }\io \ue^2\ln (a\ue).
\end{eqnarray}
Adding \eqref{2.tv1} to \eqref{2.tv3} and taking into account that
\[
 -\frac{\eps \norm[\Lom\infty]{v_0}^2}{\mu } s^2\ln  as\leq \frac{\norm[\Lom\infty]{v_0}^2}{2ea^2\mu }
\]
for any $\eps \in[0,1)$ and $s\geq 0$, we obtain that
\begin{eqnarray*}
&&\frac{d}{dt}\Big\{\int_{\Omega}|\nabla
\ve|^2+\frac{\|v_0\|_{L^{\infty}(\Omega)}^2}{\mu}\int_{\Omega}\ue\Big\}\\
&& \le -\left(\int_{\Omega}|\nabla
\ve|^2+\frac{\|v_0\|_{L^{\infty}(\Omega)}^2}{\mu}\int_{\Omega}\ue\right)+\left(2\|v_0\|_{L^{\infty}(\Omega)}^2+\frac{\kappa_{+}+1}{\mu}\|v_0\|_{L^{\infty}(\Omega)}^2\right)\int_{\Omega}
\ue+\|v_0\|_{L^{\infty}(\Omega)}^2+\frac{|\Om|\norm[\Lom\infty]{v_0}^2}{2\mu ea^2}.
\end{eqnarray*}
Since Lemma \ref{lu1} shows that $\iO \ue(x,t) dx\le m_1$ for any $\eps \in[0,1)$ and $t\in(0,\Tmax)$,
a comparison argument leads to
\begin{eqnarray*}
&&\int_{\Omega}|\nabla
\ve|^2+\frac{\|v_0\|_{L^{\infty}(\Omega)}^2}{\mu}\int_{\Omega}\ue \\
&& \le \max \left\{|\nabla
v_0|^2+\frac{\|v_0\|_{L^{\infty}(\Omega)}^2}{\mu}\int_{\Omega}u_0,\,\kl{1+\frac{|\Om|}{2ea^2\mu }}\|v_0\|_{L^{\infty}(\Omega)}^2+
\left(2\|v_0\|_{L^{\infty}(\Omega)}^2+\frac{\kappa_{+}+1}{\mu}\|v_0\|_{L^{\infty}(\Omega)}^2\right)m_1\right\},
\end{eqnarray*}
holding true on $(0,\Tmax)$, which in particular implies \eqref{2.tv2}
\end{proof}

\section{Existence of a bounded classical solution}\label{sec:bdclasssol}

We now turn to the analysis of the coupled functional of
$\int_{\Omega}u^p$ and $\int_{\Omega}|\nabla v|^{2p}$. We first
apply standard testing procedures to gain the time evolution of
each quantity.

\begin{lem}\label{l3.1} Let $u_0$, $v_0$ satisfy \eqref{id}, let $\kappa\in\mathbb{R} $, $\mu >0$, $\chi >0$. For any $p\in[1,\infty)$, any $\eps \in[0,1)$, we have that the solution of \eqref{epssys} with $a$ as in \eqref{defa} satisfies
\begin{equation}\label{3.1}
\frac{d}{dt}\int_{\Omega}\ue^p+\frac{2(p-1)}{p}\int_{\Omega}|\nabla
\ue^{\frac{p}{2}}|^2\le \frac{p(p-1)}{2} \chi
^2\int_{\Omega}\ue^p|\nabla \ve|^2+p\kappa\int_{\Omega}\ue^p-p\mu
\int_{\Omega}\ue^{p+1} -\eps  p\io \ue^{p+1}\ln  a\ue
\end{equation}
on $(0,\Tmax)$.
\end{lem}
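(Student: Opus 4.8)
The plan is to test the first equation of \eqref{epssys} with $\ue^{p-1}$ and integrate over $\Om$, treating each term on the right-hand side in turn. The diffusion term $\io \ue^{p-1}\Lap\ue$ is handled by integration by parts together with the homogeneous Neumann boundary condition $\delny\ue=0$, which makes the boundary contribution vanish; this produces $-(p-1)\io\ue^{p-2}|\na\ue|^2$, and since $\io\ue^{p-2}|\na\ue|^2=\frac{4}{p^2}\io|\na\ue^{p/2}|^2$, it yields exactly the dissipation term $-\frac{2(p-1)}{p}\io|\na\ue^{p/2}|^2$ (after moving it to the left). The source terms $\kappa\ue-\mu\ue^2-\eps\ue^2\ln a\ue$ contribute $p\kappa\io\ue^p-p\mu\io\ue^{p+1}-\eps p\io\ue^{p+1}\ln a\ue$ directly, without any manipulation.

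The only term requiring genuine estimation is the chemotactic one. Integrating by parts once more, using $\delny\ve=0$,
\[
 -\chi(p-1)\io \ue^{p-1}\na\cd(\ue\na\ve)
  = \chi(p-1)\io \na(\ue^{p-1})\cd \ue\na\ve
  = \chi\,\frac{(p-1)}{1}\io \ue^{p-1}\na\ue\cd\na\ve \cdot (p-1)\ \text{-type rearrangement};
\]
more cleanly, I would first write $\ue^{p-1}\na\ue=\frac1p\na\ue^p$ so that the chemotaxis contribution becomes $\chi(p-1)\io \ue^{p-1}\na\ue\cd\na\ve$, and then bound it by Young's inequality. Splitting $\chi(p-1)\ue^{p-1}\na\ue\cd\na\ve = \big(\tfrac{2(p-1)}{p}\big)^{1/2}\ue^{(p-2)/2}\na\ue \cdot \big(\tfrac{p(p-1)}{2}\big)^{1/2}\chi\,\ue^{p/2}\na\ve$ and applying $ab\le\frac12a^2+\frac12b^2$ produces precisely $\frac{(p-1)}{p}\cdot 2\io|\na\ue^{p/2}|^2\cdot\frac12$ absorbing into the dissipation, plus $\frac{p(p-1)}{2}\chi^2\io\ue^p|\na\ve|^2$, which is the stated chemotaxis term on the right-hand side of \eqref{3.1}.

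The main obstacle is purely bookkeeping: choosing the Young splitting so that the gradient part of the chemotaxis term is absorbed into exactly half of the available diffusion dissipation, leaving the coefficient $\frac{2(p-1)}{p}$ intact on the left after absorbing. Concretely, one wants the $|\na\ue^{p/2}|^2$-coefficient coming out of Young's inequality to match the diffusive coefficient so that no net dissipation is lost; this forces the particular constant $\frac{p(p-1)}{2}\chi^2$ in front of $\io\ue^p|\na\ve|^2$. I would verify this by writing $\ue^{p-1}\na\ue=\frac2p\ue^{p/2}\na\ue^{p/2}$ and applying Young's inequality in the form $\chi(p-1)\cdot\frac2p\,\ue^{p/2}\na\ue^{p/2}\cd\na\ve \le \frac{2(p-1)}{p}|\na\ue^{p/2}|^2 + \frac{(p-1)}{2p}\chi^2\cdot\frac{4}{?}\,\ue^p|\na\ve|^2$ and then reconciling the constant with the claimed one; any discrepancy signals an arithmetic slip rather than a structural difficulty.

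Once all four contributions are collected and the dissipation term is moved to the left, the inequality \eqref{3.1} follows on $(0,\Tmax)$, the time interval on which the classical solution from Lemma \ref{criterion} exists and on which all the integrations by parts are justified by the regularity $\ue\in C^{2,1}(\Ombar\times(0,\Tmax))$ and positivity $\ue>0$.
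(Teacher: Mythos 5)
Your proposal is correct and follows the paper's own proof: test the first equation with $\ue^{p-1}$, integrate by parts using the Neumann boundary conditions, let the source terms pass through untouched, and absorb the gradient part of the chemotaxis term into half of the diffusive dissipation via Young's inequality. The only flaw is bookkeeping --- the asymmetric splitting you wrote down, taken literally with $ab\le\frac12a^2+\frac12b^2$, does not reproduce the stated constants; the clean choice (and the paper's) is the symmetric one, $(p-1)\chi\,\ue^{p-1}\na\ue\cdot\na\ve \le \frac{p-1}{2}\ue^{p-2}|\na\ue|^2 + \frac{p-1}{2}\chi^2\ue^{p}|\na\ve|^2$, applied before multiplying by $p$, which absorbs exactly half of the available dissipation $(p-1)\io\ue^{p-2}|\na\ue|^2$ and, after multiplying by $p$ and using $\io\ue^{p-2}|\na\ue|^2=\frac{4}{p^2}\io|\na\ue^{\frac p2}|^2$, yields precisely the coefficients $\frac{2(p-1)}{p}$ and $\frac{p(p-1)}{2}\chi^2$ in \eqref{3.1}, resolving the question mark you left in your constant check.
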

\begin{proof}  Testing the first equation in \eqref{a} against
$\ue^{p-1}$ and using Young's inequality, we can obtain
\begin{align}\label{3.1''}
\frac{1}{p}\frac{d}{dt}\int_{\Omega}\ue^p
&=-(p-1)\int_{\Omega}\ue^{p-2}|\nabla
\ue|^2+(p-1)\chi \int_{\Omega}\ue^{p-1}\nabla \ue \cdot \nabla
\ve+\kappa\int_{\Omega}\ue^p-\mu\int_{\Omega}\ue^{p+1}\nn\\
&\quad -\eps \io \ue^{p+1}\ln (a\ue)\nn\\
&\le -(p-1)\int_{\Omega}\ue^{p-2}|\nabla
\ue|^2+\frac{p-1}{2}\int_{\Omega}\ue^{p-2}|\nabla
\ue|^2+\frac{p-1}{2} \chi^2\int_{\Omega}\ue^{p}|\nabla
\ve|^2
\nn\\
&\quad+\kappa\int_{\Omega}\ue^p-\mu\int_{\Omega}\ue^{p+1}-\eps \io \ue^{p+1}\ln (a\ue)
\end{align}
on $(0,T_{\rm max})$, which  by using the fact that
$$\int_{\Omega}u^{p-2}|\nabla
\ue|^2=\frac{4}{p^2}\int_{\Omega}|\nabla \ue^{\frac{p}{2}}|^2 \quad \text{ on } (0,\Tmax)$$
directly results in
\eqref{3.1}.
\end{proof}

\begin{lem}\label{l3.2} Let $u_0$, $v_0$ satisfy \eqref{id}, let $\kappa\in\mathbb{R} $, $\mu >0$, $\chi >0$.
 For any $p\in[1,\infty)$, any $\eps \in[0,1)$, we have that the solution of \eqref{epssys} with $a$ as in \eqref{defa} satisfies
\begin{eqnarray}\label{3.1'}
\frac{d}{dt}\int_{\Omega} |\nabla \ve|^{2p}+p\int_{\Omega}|\nabla
\ve|^{2p-2}|D^2 \ve|^2 \le
p(p+N-1)\|v_0\|_{L^{\infty}(\Omega)}^2\int_{\Omega}\ue^2|\nabla
\ve|^{2p-2} \quad \text{on } (0,T_{\rm
max}).
\end{eqnarray}
\end{lem}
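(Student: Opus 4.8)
The plan is to test the evolution equation for $\nabla\ve$ against $|\nabla\ve|^{2p-2}\nabla\ve$ and carefully handle the three types of terms that arise: the diffusion/Hessian terms, a boundary term, and the coupling to $\ue$. Starting from $\vet=\Lap\ve-\ue\ve$, I would compute
\[
 \ddt\io|\nabla\ve|^{2p} = 2p\io |\nabla\ve|^{2p-2}\nabla\ve\cdot\nabla(\Lap\ve-\ue\ve).
\]
The standard pointwise identity $\nabla\ve\cdot\nabla\Lap\ve=\frac12\Lap|\nabla\ve|^2-|D^2\ve|^2$ converts the first piece into integrals of $\Lap|\nabla\ve|^2$ and $|D^2\ve|^2$. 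Integrating the $\Lap|\nabla\ve|^2$-term by parts once produces a bulk term $-p(p-1)\io|\nabla\ve|^{2p-4}\big|\nabla|\nabla\ve|^2\big|^2$, which is favorable and can be discarded, plus a boundary term $p\int_{\dOm}|\nabla\ve|^{2p-2}\partial_\nu|\nabla\ve|^2$.

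First I would dispose of the boundary term. Under homogeneous Neumann conditions one has the well-known inequality $\partial_\nu|\nabla\ve|^2\le 2\kappa_\Om|\nabla\ve|^2$ on $\dOm$, where $\kappa_\Om$ bounds the curvatures of $\dOm$; trace and interpolation estimates then let this be absorbed, but in the present clean statement the boundary term is simply nonpositive after using convexity or is controlled so as not to appear — here I expect the intended route is that the geometry contributes only the constant $N-1$ that shows up in the factor $p+N-1$. Next I would treat the coupling term $-2p\io|\nabla\ve|^{2p-2}\nabla\ve\cdot\nabla(\ue\ve)$. Expanding $\nabla(\ue\ve)=\ve\nabla\ue+\ue\nabla\ve$ and integrating by parts to move the derivative off $\ue$, then invoking $\norm[\Lom\infty]{\ve}\le\norm[\Lom\infty]{v_0}$ from Lemma \ref{lem:normvdecreases} and Young's inequality, one generates exactly a term of the form $\ue^2|\nabla\ve|^{2p-2}$ (with the $\norm[\Lom\infty]{v_0}^2$ factor) together with Hessian contributions that combine with those already present.

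The main obstacle — and the step requiring the most care — is bookkeeping the Hessian terms so that they assemble into the single clean coefficient $p$ in front of $\io|\nabla\ve|^{2p-2}|D^2\ve|^2$ on the left, while the discardable nonnegative gradient terms are dropped and the curvature of $\dOm$ contributes the precise shift $N-1$. The identity from Lemma \ref{lem:elementary.estimates}a), namely $|\Lap\ve|^2\le N|D^2\ve|^2$, is what allows the dimension $N$ to enter with the right weight when estimating cross terms like $|\nabla\ve|^{2p-2}\nabla\ve\cdot(D^2\ve\cdot\nabla\ve)$ against $|\nabla\ve|^{2p-2}|D^2\ve|^2$. After collecting: the genuinely negative Hessian and gradient terms I keep (or retain only the Hessian term with coefficient $p$), the coupling term yields the right-hand side $p(p+N-1)\norm[\Lom\infty]{v_0}^2\io\ue^2|\nabla\ve|^{2p-2}$, and all intermediate constants are arranged by Young's inequality with carefully chosen weights, giving \eqref{3.1'}.
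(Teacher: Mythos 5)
Your skeleton is the same as the paper's: differentiate the second equation to get $(|\na\ve|^2)_t=\Lap|\na\ve|^2-2|D^2\ve|^2-2\na\ve\cdot\na(\ue\ve)$, test with $(|\na\ve|^2)^{p-1}$, integrate by parts, move the derivative off $\ue\ve$ in the coupling term, and finish with $\norm[\Lom\infty]{\ve}\le\norm[\Lom\infty]{v_0}$, Young's inequality and \eqref{eq:Delta.Hessian}. But two of your concrete claims about how the constant $p(p+N-1)$ assembles are wrong. First, the shift $N-1$ has nothing to do with the curvature of $\dOm$. In the paper the boundary integral $p\int_{\dOm}|\na\ve|^{2p-2}\partial_\nu|\na\ve|^2$ is simply dropped in passing to the inequality \eqref{3.2}, and the constant arises entirely from interior estimates: the $N$ comes from Young's inequality combined with $|\Lap\ve|^2\le N|D^2\ve|^2$ applied to $2\io\ue\ve|\na\ve|^{2p-2}\Lap\ve$, and the $p-1$ from Young applied to the cross term $2(p-1)\io\ue\ve|\na\ve|^{2p-4}\na\ve\cdot\na|\na\ve|^2$; that is, $p+N-1=N+(p-1)$. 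Your proposed route through $\partial_\nu|\na\ve|^2\le 2\kappa_\Om|\na\ve|^2$ plus trace and interpolation estimates would generate additional terms that do not appear in \eqref{3.1'}, and convexity is not among the hypotheses, so this step of yours is not closed, and your guess for where $N-1$ originates is incorrect.

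Second, you cannot discard the favorable term $(p-1)\io|\na\ve|^{2p-4}\big|\na|\na\ve|^2\big|^2$: in the paper's proof it is precisely what absorbs the cross term just mentioned, at cost $(p-1)\norm[\Lom\infty]{v_0}^2\io\ue^2|\na\ve|^{2p-2}$, and it cancels identically in the final balance. If you drop it at the outset, the cross term must instead be absorbed by the Hessian integral (via $\big|\na|\na\ve|^2\big|\le 2|\na\ve|\,|D^2\ve|$), and keeping the coefficient $p$ in front of $\io|\na\ve|^{2p-2}|D^2\ve|^2$ then forces a right-hand constant of the form $p\big(\sqrt N+2(p-1)\big)^2$, strictly larger than $p(p+N-1)$. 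Since this constant feeds into $k_1$, $k_2$ of Lemma \ref{l3.5} and hence into the explicit largeness condition of Theorem \ref{thm1}, the bookkeeping is not cosmetic. Relatedly, Lemma \ref{lem:elementary.estimates}~a) is never needed for terms of the type $\na\ve\cdot(D^2\ve\cdot\na\ve)$; it is used only on the term containing $\Lap\ve$. In short, the strategy is the right one, but the assembly you describe would not yield \eqref{3.1'} as stated.
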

\begin{proof} We differentiate the second equation in \eqref{a} to
compute
$$(|\nabla \ve|^2)_t=2\nabla \ve\cdot \nabla \Delta \ve-2\nabla \ve\cdot \nabla (\ue\ve)=\Delta |\nabla \ve|^2-2|D^2\ve|^2-2\nabla \ve\cdot \nabla (\ue\ve)
\quad \text{in }  \Omega\times (0,T_{\rm max}).$$ Upon multiplication
by $(|\nabla \ve|^2)^{p-1}$ and integration, this leads to
\begin{equation}\label{3.2}
\frac{1}{p}\frac{d}{dt} \int_{\Omega}|\nabla
\ve|^{2p}+(p-1)\int_{\Omega} |\nabla \ve|^{2p-4}\big|\nabla|\nabla
\ve|^2\big|^2+2\int_{\Omega}|\nabla \ve|^{2p-2}|D^2\ve|^2\le
-2\int_{\Omega}|\nabla \ve|^{2p-2} \nabla \ve \cdot \nabla(\ue\ve)
\end{equation}
on $(0,T_{\rm max})$. Then integrating by parts, we
achieve
\begin{align*}
 -2\int_{\Omega}|\nabla \ve|^{2p-2} \nabla \ve \cdot
\nabla(\ue\ve)&=2\int_{\Omega} \ue\ve|\nabla \ve|^{2p-2} \Delta
\ve+2(p-1)\int_{\Omega} \ue\ve|\nabla \ve|^{2p-4}\nabla \ve\cdot
\nabla|\nabla \ve|^2\\
&\le 2\|v_0\|_{L^{\infty}(\Omega)}\int_{\Omega} \ue|\nabla
\ve|^{2p-2} |\Delta
\ve|+2(p-1)\|v_0\|_{L^{\infty}(\Omega)}\int_{\Omega} \ue|\nabla
\ve|^{2p-3}\cdot \big|\nabla|\nabla \ve|^2\big|
\end{align*}
 throughout $(0,\Tmax)$, were we have used Lemma \ref{lem:normvdecreases}.
 Next by Young's inequality and Lemma \ref{lem:elementary.estimates} a)
we have that
 $$2\|v_0\|_{L^{\infty}(\Omega)}\int_{\Omega} \ue|\nabla
\ve|^{2p-2} |\Delta \ve|\le \int_{\Omega} |\nabla \ve|^{2p-2} |D^2
\ve|^2+N\|v_0\|^2_{L^{\infty}(\Omega)}\int_{\Omega}\ue^2 |\nabla
\ve|^{2p-2},$$

and
\begin{eqnarray*}
&&  2(p-1)\|v_0\|_{L^{\infty}(\Omega)}\int_{\Omega} \ue|\nabla
\ve|^{2p-3}\cdot \big|\nabla|\nabla \ve|^2\big|\\
&&\le (p-1)\int_{\Omega} |\nabla \ve|^{2p-4}\big|\nabla|\nabla
\ve|^2\big|^2+(p-1)\|v_0\|^2_{L^{\infty}(\Omega)}\int_{\Omega}\ue^2
|\nabla \ve|^{2p-2}.
\end{eqnarray*}

Thereupon, \eqref{3.2} implies that
$$\frac{d}{dt} \int_{\Omega}|\nabla
\ve|^{2p}+p\int_{\Omega}|\nabla \ve|^{2p-2}|D^2\ve|^2\le
p(p+N-1)\|v_0\|^2_{L^{\infty}(\Omega)}\int_{\Omega}\ue^2 |\nabla
\ve|^{2p-2}$$ on $(0,T_{\rm max})$.
\end{proof}

Next we will show that if $\mu$ is suitably large, then all
integrals on the right side in \eqref{3.1} and \eqref{3.1'}
can adequately be estimated in terms of the respective dissipated
quantities on the left, in consequence implying the $L^p$ estimate
of $u$ and the boundedness estimate for $|\nabla v|$.

\begin{lem}\label{l3.5}
Let $p>1$. With
\begin{align}\label{eq:defk1k2}
 k_1(p,N):=\f{p(p-1)}{(p+1)}\left(\frac{4(p-1)(4p^2+N)}{p+1}\right)^{\frac{1}{p}}\nn\\
 k_2(p,N):=\frac{4(p+N-1)}{p+1}\left(\frac{8(p-1)(p+N-1)(4p^2+N)}{p+1}\right)^{\frac{p-1}{2}}
\end{align}
the following holds:
If $\mu >0$, $\chi >0$ and the positive function $v_0\in C^1(\Ombar)$ fulfil
\begin{equation}\label{cond:mularge}
\mu\ge k_1(p,N)\|\chi v_0\|^{\frac{2}{p}}_{L^\infty(\Omega)}+k_2(p,N)\|\chi v_0\|^{2p}_{L^\infty(\Omega)},
\end{equation}
 then
for every $\kappa\in \mathbb{R} $, $0<u_0\in C^0(\Ombar)$ there is $C>0$ such that for every $\eps \in[0,1)$ the solution $(\ue,\ve)$ of \eqref{epssys} with $a$ as in \eqref{defa} satisfies
\[
 \io \ue^p(\cdot,t) + \io |\na \ve(\cdot,t)|^{2p}\leq C \qquad \text{on } (0,\Tmax).
\]
If, however, $\mu >0$, $\chi >0$ and $0<v_0\in C^1(\Ombar)$ do not satisfy \eqref{cond:mularge}, then
for every $\eps \in(0,1)$, $\kappa\in \mathbb{R} $, $0<u_0\in C^0(\Ombar)$ there is $c_\eps>0$ such that the solution $(\ue,\ve)$ of \eqref{epssys} with $a$ as in \eqref{defa} satisfies
\[
 \io \ue^p(\cdot,t) + \io |\na \ve(\cdot,t)|^{2p}\leq c_\eps \qquad \text{on } (0,\Tmax).
\]
\end{lem}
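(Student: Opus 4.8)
The plan is to study the coupled functional $y(t):=\io\ue^p+\io|\na\ve|^{2p}$ by adding the two differential inequalities of Lemma \ref{l3.1} and Lemma \ref{l3.2}, and to show that—once $\mu$ is large enough—the two cross terms on their right-hand sides, namely $\frac{p(p-1)}{2}\chi^2\io\ue^p|\na\ve|^2$ and $p(p+N-1)\|v_0\|_{L^\infty(\Om)}^2\io\ue^2|\na\ve|^{2p-2}$, can be absorbed into the Hessian dissipation $p\io|\na\ve|^{2p-2}|D^2\ve|^2$ together with the logistic dissipation $-p\mu\io\ue^{p+1}$ (and, in the second case, also $-\eps p\io\ue^{p+1}\ln a\ue$). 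The resulting differential inequality of the form $y'\le -cy+C$ then yields the asserted bound by an ODE comparison, with $y(0)=\io u_0^p+\io|\na v_0|^{2p}<\infty$ finite by \eqref{id}.

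First I would convert the Hessian dissipation into gradient control: applying Lemma \ref{l_interpolation} with $q=p$ and $c=\ve$ (for which $\ve\delny\ve=0$ on $\dOm$ by the Neumann condition) and using $\|\ve\|_{L^\infty}\le\|v_0\|_{L^\infty}$ from Lemma \ref{lem:normvdecreases}, one obtains
\[
 p\io|\na\ve|^{2p-2}|D^2\ve|^2\ \ge\ \frac{p}{2(4p^2+N)\|v_0\|_{L^\infty(\Om)}^2}\io|\na\ve|^{2p+2}=:B\io|\na\ve|^{2p+2}.
\]
Each cross term is then treated by Young's inequality so as to produce a multiple of $\io\ue^{p+1}$ and a multiple of $\io|\na\ve|^{2p+2}$; crucially, the two terms call for different conjugate exponents—$\frac{p+1}{p}$ and $p+1$ for the first, $\frac{p+1}{2}$ and $\frac{p+1}{p-1}$ for the second—which is exactly what produces the $\frac1p$- and $\frac{p-1}{2}$-powers appearing in $k_1$ and $k_2$. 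Choosing the free Young parameters so that the two $\io|\na\ve|^{2p+2}$-contributions together remain below $B$ (reserving a small share for later), the combined coefficient of $\io\ue^{p+1}$ comes out bounded by $p\big(k_1(p,N)\|\chi v_0\|_{L^\infty}^{2/p}+k_2(p,N)\|\chi v_0\|_{L^\infty}^{2p}\big)$.

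The key point of this bookkeeping is that the factor $\chi^2$ in the first cross term and the factor $\|v_0\|_{L^\infty}^2$ in the dissipation constant $B$ combine, after the above Young split, into the single quantity $\|\chi v_0\|_{L^\infty}=\chi\|v_0\|_{L^\infty}$; equivalently, one may first pass to $w:=\chi v$ (as motivated in the introduction), after which the system is $\chi$-free with rescaled datum $\|w_0\|_{L^\infty}=\|\chi v_0\|_{L^\infty}$, and a symmetric, equal split of the budget $B$ between the two cross terms does the job with a little room to spare. Under \eqref{cond:mularge} the whole coefficient of $\io\ue^{p+1}$ is then $\le p\mu$, so after subtracting $-p\mu\io\ue^{p+1}$ a strictly negative multiple of $\io\ue^{p+1}$ survives. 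Absorbing the lower-order term $p\kappa\io\ue^p$ by Young's inequality, disposing of the $\eps$-term via the elementary bound $-s^{p+1}\ln as\le C_\ast$ (so $-\eps p\io\ue^{p+1}\ln a\ue\le pC_\ast|\Om|$, uniformly in $\eps\in[0,1)$), and re-inserting the reserved fraction of the $\io|\na\ve|^{2p+2}$-dissipation to dominate $\io|\na\ve|^{2p}$ and a fraction of $\io\ue^{p+1}$ to dominate $\io\ue^p$, one arrives at $y'\le -cy+C$ with $\eps$-independent constants, proving the first claim.

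For the second claim, when \eqref{cond:mularge} fails but $\eps\in(0,1)$, the same computation leaves a possibly positive multiple $K\io\ue^{p+1}$ of the critical term, but now the superquadratic term $-\eps p\io\ue^{p+1}\ln a\ue$ is available. Since $K-p\mu-\eps p\ln as\le 0$ once $s\ge\rho_\eps:=\frac1a e^{(K-p\mu)/(\eps p)}$, on $\{\ue\ge\rho_\eps\}$ the integrand $\ue^{p+1}(K-p\mu-\eps p\ln a\ue)$ is nonpositive, while on $\{\ue<\rho_\eps\}$ it is bounded by an $\eps$-dependent constant; the same term, being eventually far larger than $\ue^{p+1}$, still provides enough coercivity to dominate $\io\ue^p$. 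One again reaches $y'\le-cy+C_\eps$, now with an $\eps$-dependent constant, yielding the bound $c_\eps$. The main obstacle throughout is the constant-tracking in the Young/interpolation step—in particular forcing the separate $\chi$- and $\|v_0\|_{L^\infty}$-dependencies to merge precisely into $\|\chi v_0\|_{L^\infty}$ and to reproduce the explicit $k_1,k_2$—while the case $\eps>0$ additionally requires the region argument that trades the missing smallness of $\mu$ against the logistic dissipation.
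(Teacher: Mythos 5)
Your strategy is the same as the paper's: add the inequalities of Lemma \ref{l3.1} and Lemma \ref{l3.2}, absorb the two cross terms by Young's inequality combined with Lemma \ref{l_interpolation}, and close with an ODE comparison for the coupled functional. The conjugate exponents you name ($\tfrac{p+1}{p}$, $p+1$ for the first term and $\tfrac{p+1}{2}$, $\tfrac{p+1}{p-1}$ for the second) are exactly those used in the paper, and your merging of $\chi$ and $\|v_0\|_{L^\infty(\Omega)}$ (either by carrying the weight $\chi^{2p}$, as the paper does, or by the rescaling $w=\chi v$) is legitimate. Two of your deviations are harmless or even slightly simpler: you convert the Hessian dissipation into $\io|\na \ve|^{2p+2}$ once and then budget it, where the paper converts the Young-produced gradient terms back into Hessian terms; and you dominate the lower-order quantities $\io \ue^p$, $\io|\na \ve|^{2p}$ by pointwise Young's inequality against $\io \ue^{p+1}$, $\io|\na \ve|^{2p+2}$, where the paper instead invokes the Poincar\'e inequality together with the mass bound of Lemma \ref{lu1} and the gradient bound of Lemma \ref{ltv2}; your route does not need those two lemmas. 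Your region-splitting treatment of the case $\eps>0$ is equivalent to the paper's observation that the relevant supremum over $s>0$ is finite once $\eps>0$.

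There is, however, a quantitative slip which, as written, makes the first case fail precisely when \eqref{cond:mularge} holds with equality. You claim the combined coefficient of $\io \ue^{p+1}$ is bounded by $p\big(k_1\|\chi v_0\|_{L^\infty(\Omega)}^{2/p}+k_2\|\chi v_0\|_{L^\infty(\Omega)}^{2p}\big)$, hence $\le p\mu$, and conclude that after subtracting $p\mu\io \ue^{p+1}$ ``a strictly negative multiple of $\io \ue^{p+1}$ survives''. That inference does not follow: $\le p\mu$ minus $p\mu$ only gives $\le 0$, leaving nothing with which to absorb $p\kappa\io \ue^p$ (for $\kappa>0$) or to dominate $\io \ue^p$ in your ODE closure. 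What is actually needed, and what the paper arranges, is the sharper bound $\tfrac{p}{2}\big(k_1\|\chi v_0\|_{L^\infty(\Omega)}^{2/p}+k_2\|\chi v_0\|_{L^\infty(\Omega)}^{2p}\big)\le \tfrac{p\mu}{2}$: choosing $\delta_1,\delta_2$ so that each cross term consumes only a \emph{quarter} of the Hessian dissipation $p\chi^{2p}\io|\na \ve|^{2p-2}|D^2\ve|^2$ produces exactly the coefficients $\tfrac{p}{2}k_1\|\chi v_0\|^{2/p}_{L^\infty(\Omega)}$ and $\tfrac{p}{2}k_2\|\chi v_0\|^{2p}_{L^\infty(\Omega)}$, so that $-\tfrac{p\mu}{2}\io \ue^{p+1}$ remains after cancellation. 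In fact your own allocation rescues you: giving each cross term (a bit less than) half of the budget $B$ yields coefficients $\tfrac{p}{2}k_1\|\chi v_0\|^{2/p}_{L^\infty(\Omega)}\,2^{-1/p}$ and $\tfrac{p}{2}k_2\|\chi v_0\|^{2p}_{L^\infty(\Omega)}\,2^{-(p-1)/2}$, strictly below the critical values, so both a gradient reserve and a negative multiple of $\io \ue^{p+1}$ survive simultaneously --- but you must state and use the factor $\tfrac{p}{2}$, not $p$, for the argument to cover the borderline case of \eqref{cond:mularge}.
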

\begin{proof} Lemma \ref{l3.1} and \ref{l3.2} show that
\begin{align}\label{3.12}
& \frac{d}{dt}\Big(\int_{\Omega}\ue^p+\chi ^{2p}\int_{\Omega}|\nabla
\ve|^{2p}\Big)+\frac{2(p-1)}{p}\int_{\Omega}|\nabla
\ue^{\frac{p}{2}}|^2+p\chi ^{2p}\int_{\Omega}|\nabla \ve|^{2p-2}|D^2 \ve|^2 \nn\\
&\le \frac{p(p-1)}{2} \chi ^2\int_{\Omega}\ue^p|\nabla
\ve|^2+p(p+N-1)\|v_0\|_{L^{\infty}(\Omega)}^2\chi ^{2p}\int_{\Omega}\ue^2|\nabla
\ve|^{2p-2}\nn\\
&+p\kappa\int_{\Omega}\ue^p-p\mu \int_{\Omega}\ue^{p+1}- \eps p\io \ue^{p+1}\ln  a\ue
\end{align}
throughout $(0,T_{\rm{max}})$. Using Young's inequality, we can assert that for any
$\delta_1>0$,
\begin{eqnarray}\label{3.13}
 \frac{p(p-1)}{2} \chi ^{2}\int_{\Omega}\ue^p|\nabla
\ve|^2\le \frac{p(p-1)\delta_1 ^{p+1}}{2(p+1)}\chi ^{2p}\int_{\Omega}|\nabla
\ve|^{2(p+1)}+\frac{p^2(p-1)}{2(p+1)}\Big(\frac{1}{\delta_1}\Big)^{\frac{p+1}{p}}\chi ^{\frac2p}\int_{\Omega}\ue^{p+1}
\end{eqnarray}
on $(0,T_{\rm max})$. We then apply Lemma
\ref{l_interpolation} and $\|\ve(\cdot,t)\|_{L^{\infty}(\Omega)}\le
\|v_0\|_{L^{\infty}(\Omega)}$ to obtain
$$ \frac{p(p-1)\delta_1 ^{p+1}}{2(p+1)}\chi ^{2p}\int_{\Omega}|\nabla
\ve|^{2(p+1)}\le \frac{p(p-1)(4p^2+N)\delta_1
^{p+1}\|v_0\|^2_{L^\infty(\Omega)}}{(p+1)}\chi ^{2p}\int_{\Omega}|\nabla
\ve|^{2p-2}|D^2\ve|^2 $$ for all $t\in (0,T_{\rm max})$. If we let
$\delta_1=\left(\frac{p+1}{4(p-1)(4p^2+N)\|v_0\|^2_{L^\infty(\Omega)}}\right)^{\frac{1}{p+1}}$,
\eqref{3.13} shows that
\begin{equation}\label{3.14}
\frac{p(p-1)}{2}\chi ^2 \int_{\Omega}\ue^p|\nabla \ve|^2\le
\frac{p}{4}\chi ^{2p}\int_{\Omega}|\nabla
\ve|^{2p-2}|D^2\ve|^2+\frac{p^2(p-1)}{2(p+1)}\left(\frac{4(p-1)(4p^2+N)}{p+1}\right)^{\frac{1}{p}}\|v_0\|^{\frac{2}{p}}_{L^\infty(\Omega)}\chi ^{\frac2p}\int_{\Omega}\ue^{p+1}
\end{equation}
on $(0,T_{\rm max})$. Similarly, for any $\delta_2>0$ we have
\begin{eqnarray}\label{3.15}
&& p(p+N-1)\|v_0\|_{L^{\infty}(\Omega)}^2\chi ^{2p}\int_{\Omega}u^2|\nabla
\ve|^{2p-2} \nn\\
&& \le \frac{p(p-1)(p+N-1)\delta_2
^{\frac{p+1}{p-1}}\|v_0\|^2_{L^\infty(\Omega)}}{p+1}\chi
^{2p}\int_{\Omega}|\nabla
\ve|^{2(p+1)}\nn\\
&&\quad
+\frac{2p(p+N-1)\|v_0\|^2_{L^\infty(\Omega)}}{p+1}\Big(\frac{1}{\delta_2}\Big)^{\frac{p+1}{2}}\chi
^{2p}\int_{\Omega}\ue^{p+1}
\end{eqnarray}
on $(0,T_{\rm max})$. Using Lemma \ref{l_interpolation} once more and taking
$\delta_2=\left(\frac{p+1}{8(p-1)(p+N-1)(4p^2+N)\|v_0\|^4_{L^\infty(\Omega)}}\right)^{\frac{p-1}{p+1}}$,
we can obtain from \eqref{3.15} that
\begin{eqnarray}\label{3.17}
&& p(p+N-1)\|v_0\|_{L^{\infty}(\Omega)}^2\chi ^{2p}\int_{\Omega}\ue^2|\nabla
\ve|^{2p-2}\nn\\
&& \le \frac{p}{4}\chi ^{2p}\int_{\Omega}|\nabla
\ve|^{2p-2}|D^2\ve|^2\nn\\
&&\quad+\frac{2p(p+N-1)}{p+1}\left(\frac{8(p-1)(p+N-1)(4p^2+N)}{p+1}\right)^{\frac{p-1}{2}}\|v_0\|^{2p}_{L^\infty(\Omega)}\chi
^{2p}\int_{\Omega}\ue^{p+1}
\end{eqnarray}
on $(0,T_{\rm max})$.
Combining inequalities \eqref{3.12}, \eqref{3.13}
and \eqref{3.17}, we arrive at
\begin{align}\label{eq:diffineq}
&\frac{d}{dt}\kl{\int_{\Omega}\ue^p+\chi ^{2p}\int_{\Omega}|\nabla
\ve|^{2p}}+\frac{2(p-1)}{p}\int_{\Omega}|\nabla
\ue^{\frac{p}{2}}|^2+\frac{p}{2}\chi ^{2p}\int_{\Omega}|\nabla
\ve|^{2p-2}|D^2 \ve|^2 \\\nn &\le
\frac{p}2\kl{k_1(p,N)\norm[\Lom\infty]{\chi v_0}^{\frac2p} +
k_2(p,N)\norm[\Lom\infty]{\chi v_0}^{2p} - \mu }\io
\ue^{p+1} - \eps \io \ue^{p+1}\ln a\ue + p\kappa\io \ue^p -
\frac{\mu p}2\io \ue^{p+1}
\end{align}
on $(0,T_{\rm{max}})$.

We can moreover invoke the Poincar\'{e} inequality along with Lemma \ref{lu1} to
estimate
\begin{eqnarray*}
\int_{\Omega}\ue^p = \|\ue^{\frac{p}{2}}\|^2_{L^2(\Omega)}\le
c_1\big(\|\nabla
\ue^{\frac{p}{2}}\|^2_{L^2(\Omega)}+\|\ue^{\frac{p}{2}}\|^2_{L^{\frac{2}{p}}(\Omega)}\big)\le
c_2\Big(\int_{\Omega}|\nabla \ue^{\frac{p}{2}}|^2+1\Big) \qquad
\text{on } (0,T_{\rm max})
\end{eqnarray*}
with some $c_1>0$ and $c_2>0$. In a quite similar way, using Lemma
\ref{ltv2} we obtain constants $c_3>0$ and $c_4>0$ such that
\begin{eqnarray*}
\int_{\Omega}|\nabla \ve|^{2p} &=& \big\||\nabla
\ve|^{p}\big\|^2_{L^2(\Omega)}\\
&\le& c_3\big(\big\|\nabla |\nabla \ve|^{p}
\big\|^2_{L^2(\Omega)}+\big\||\nabla
\ve|^{p}\big\|^2_{L^{\frac{2}{p}}(\Omega)}\big)\\
&\le& c_4\Big(\int_{\Omega}\big|\nabla |\nabla
\ve|^{p}\big|^2+1\Big)\\
&=& c_4\Big(p^2\int_{\Omega}|\nabla \ve|^{2p-4}|D^2\ve \nabla
\ve|^2+1\Big)\\
&\le& c_4\Big(p^2\int_{\Omega}|\nabla \ve|^{2p-2}|D^2\ve |^2+1\Big)
 \qquad \text{on }
(0,T_{\rm max}).
\end{eqnarray*}
Introducing $c_5:=\min\set{\frac{2(p-1)}{c_2p},\frac{p}{2c_4}}$ and abbreviating $\ye(t):=\io \ue^p+\io |\na \ve|^{2p}$, we thus obtain from \eqref{eq:diffineq} that
\[
 \ye'(t)+c_5 \ye(t) \leq K \qquad \text{for all } t\in(0,\Tmax),
\]
where
\[
 K:=\begin{cases}
     p|\Om|\cdot\kl{\sup_{s>0} (\kappa s^p -\frac{\mu }2s^{p+1}) + |\inf_{s>0} s^{p+1} \ln s|}, &\text{ if \eqref{cond:mularge}}\\%\mu >k_1(p,N)\|v_0\|^{\frac{2}{p}}_{L^\infty(\Omega)}+k_2(p,N)\|v_0\|^{2p}_{L^\infty(\Omega)}\\
      p|\Om| \sup_{s>0} \kl{\kl{\frac12 k_1(p,N)\norm[\Lom\infty]{v_0}^{\frac2p} + \frac12 k_2(p,N)\norm[\Lom\infty]{v_0}^{2p}-\mu } s^{p+1} + \kappa s^p -\eps s^{p+1}\ln  as}&\text{ else}.
    \end{cases}
\]
In consequence,
\[
 \ye(t)\leq \max\set{\ye(0);\frac{K}{c_{42}}}
\]
for all $t\in(0,\Tmax)$. We note that $K$ depends on $\eps $ if and only if \eqref{cond:mularge} is not satisfied.
\end{proof}

The previous lemma ensures boundedness of $u$ in some $L^p$-space for finite $p$ only. Fortunately, this is already sufficient for the solution to be bounded -- and global.

\begin{lem}\label{lem:ulp.to.boundedness}
 Let $T\in(0,\infty]$, $p>N$, $M>0$, $a>0$, $\kappa\in\mathbb{R} $, $\mu >0$. Then there is $C>0$ with the following property:
 If for some $\eps \in[0,1)$, the function $(\ue,\ve)\in (C^0(\Ombar\times[0,T))\cap C^{2,1}(\Ombar\times(0,T)))^2$ is a solution to \eqref{epssys} with $a$ as in \eqref{defa} such that
\[
 0\leq \ue,\,\, 0\leq \ve \text{ in  }\Om\times(0,T)\text{ and } \io \ue^p(\cdot,t)\leq M \text{ for all } t\in (0,T),
\]
then
\[
 \norm[\Lom\infty]{\ue(\cdot,t)} + \norm[\Lom\infty]{\na \ve(\cdot,t)}\leq C \qquad \text{for all } t\in(0,T).
\]
\end{lem}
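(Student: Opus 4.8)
The plan is to leverage the $L^p$-hypothesis with $p>N$ through smoothing estimates for the Neumann heat semigroup $(e^{t\Lap})_{t\ge 0}$, proceeding in two steps: first I would upgrade the bound on $\ue$ to an $L^\infty$-bound for $\na\ve$, and then use the latter to control $\ue$ itself. The structural fact that makes every time integral below converge is that $p>N$ renders $\sigma\mapsto\sigma^{-\frac12-\frac N{2p}}$ integrable near $\sigma=0$; indeed $\frac12+\frac N{2p}<1$ exactly when $p>N$.

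To bound $\na\ve$, recall from Lemma \ref{lem:normvdecreases} that $\norm[\Lom\infty]{\ve(\cdot,t)}\le\norm[\Lom\infty]{v_0}$, so the hypothesis $\io\ue^p\le M$ gives $\norm[\Lom p]{(\ue\ve)(\cdot,t)}\le\norm[\Lom\infty]{v_0}M^{1/p}$ uniformly in $t\in(0,T)$ and $\eps\in[0,1)$. Representing $\ve$ via $\ve(t)=e^{t\Lap}v_0-\int_0^t e^{(t-s)\Lap}(\ue\ve)(s)\,ds$ and applying the standard estimate $\norm[\Lom\infty]{\na e^{\tau\Lap}w}\le c\,(1+\tau^{-\frac12-\frac N{2p}})e^{-\lambda\tau}\norm[\Lom p]{w}$ to the integrand (and the usual bound for the free evolution to the first term) yields
\[
\norm[\Lom\infty]{\na\ve(\cdot,t)}\le c\,\norm[\Lom\infty]{\na v_0}+c\,\norm[\Lom\infty]{v_0}M^{1/p}\int_0^\infty\big(1+\sigma^{-\frac12-\frac N{2p}}\big)e^{-\lambda\sigma}\,d\sigma=:C_1,
\]
the integral being finite precisely because $p>N$.

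To bound $\ue$, the essential idea is to treat the superlinear absorption by its sign rather than its magnitude: since $s^2\ln(as)\ge-\frac1{2ea^2}$ and $\eps<1$, the reaction satisfies the pointwise upper bound $\kappa\ue-\mu\ue^2-\eps\ue^2\ln(a\ue)\le\sup_{s\ge0}(\kappa s-\mu s^2)+\tfrac1{2ea^2}=\tfrac{\kappa_+^2}{4\mu}+\tfrac1{2ea^2}$, which is independent of $\eps$. I would rewrite the first equation of \eqref{epssys} as $\uet=(\Lap-1)\ue-\chi\na\cd(\ue\na\ve)+\tilde g$ with $\tilde g:=(\kappa+1)\ue-\mu\ue^2-\eps\ue^2\ln(a\ue)$, the shift furnishing the decay a constant source would otherwise lack. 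Then $\tilde g\le\tilde C_2:=\frac{(\kappa+1)_+^2}{4\mu}+\frac1{2ea^2}$ (again uniformly in $\eps$), and Duhamel's formula gives $\ue=T_1+T_2+T_3$ with $T_1=e^{t(\Lap-1)}u_0$, $T_2=-\chi\int_0^t e^{(t-s)(\Lap-1)}\na\cd(\ue\na\ve)\,ds$ and $T_3=\int_0^t e^{(t-s)(\Lap-1)}\tilde g\,ds$. Since $e^{\tau(\Lap-1)}=e^{-\tau}e^{\tau\Lap}$ is order preserving and maps a constant $c$ to $e^{-\tau}c$, I obtain the pointwise bounds $T_1\le\norm[\Lom\infty]{u_0}$ and $T_3\le\tilde C_2\int_0^t e^{-(t-s)}\,ds\le\tilde C_2$, while
\[
\norm[\Lom\infty]{T_2}\le\chi\int_0^t c\,(t-s)^{-\frac12-\frac N{2p}}e^{-\lambda'(t-s)}\norm[\Lom p]{(\ue\na\ve)(\cdot,s)}\,ds\le\chi c\,C_1M^{1/p}\int_0^\infty\sigma^{-\frac12-\frac N{2p}}e^{-\lambda'\sigma}\,d\sigma=:C_3,
\]
using $\norm[\Lom\infty]{\na\ve}\le C_1$ from Step 1; this is finite, once more because $p>N$. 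As $\ue\ge0$, combining these three contributions gives $\norm[\Lom\infty]{\ue(\cdot,t)}\le\norm[\Lom\infty]{u_0}+C_3+\tilde C_2$ uniformly in $t\in(0,T)$ and $\eps\in[0,1)$, which together with Step 1 proves the claim.

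The main obstacle is precisely this last step. The quadratic term $-\mu\ue^2$ and the logarithmic term $-\eps\ue^2\ln(a\ue)$ are far too large to be estimated in $L^\infty$, and bounding the source crudely would integrate to linear-in-$t$ growth, ruining uniformity in $t$ (and thus the case $T=\infty$). The remedy is to use the one-sided bound on the reaction together with the positivity of both $\ue$ and the semigroup, discarding the helpful negative part, and to pass to $\Lap-1$ so that the remaining constant source decays. Finally, the demand that the resulting constant be uniform in $\eps\in[0,1)$ is exactly what dictates the use of $s^2\ln(as)\ge-\frac1{2ea^2}$ in place of any $\eps$-dependent estimate.
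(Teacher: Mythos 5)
Your proof is correct, and its first step coincides with the paper's: both obtain the $L^\infty$ bound on $\na\ve$ from the hypothesis $\io\ue^p\le M$ with $p>N$ via the Neumann heat semigroup gradient estimate, the whole point being that $\sigma\mapsto\sigma^{-\frac12-\frac{N}{2p}}$ is integrable near $0$ precisely when $p>N$. The second step, however, takes a genuinely different route. The paper applies the variation-of-constants formula on time windows $[t_0,t]$ with $t_0=(t-1)_+$ and the \emph{unshifted} semigroup $e^{\tau\Lap}$: there the one-sided bound $\kappa s-\mu s^2-\eps s^2\ln(as)\le c_3$ contributes only $c_3(t-t_0)\le c_3$ because the window has length at most $1$, while the term $e^{(t-t_0)\Lap}\ue(\cdot,t_0)$ for $t>2$ must be re-controlled through $L^1$--$L^\infty$ smoothing combined with the total-mass bound of Lemma \ref{lu1}. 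You instead run Duhamel from $t=0$ for the shifted generator $\Lap-1$, so that the weight $e^{-(t-s)}$ makes the constant majorant of the reaction integrable uniformly in $t$ (also for $T=\infty$), and the initial term is dominated outright by $\norm[\Lom\infty]{u_0}$. Both arguments rest on the same two pillars — integrability coming from $p>N$, and the sign of the absorption used pointwise through positivity of the semigroup, uniformly in $\eps\in[0,1)$ via $s^2\ln(as)\ge-\frac1{2ea^2}$ — but your version is leaner: it needs neither the time-windowing with its re-initialization estimate for $\ue(\cdot,t_0)$ nor Lemma \ref{lu1} at all, at the harmless price of carrying the factor $e^{-\tau}$ through the estimates. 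One cosmetic remark: the semigroup estimate you invoke for $e^{\tau\Lap}\na\cd$ reads $c\,(1+\tau^{-\frac12-\frac N{2p}})e^{-\lambda_1\tau}$ rather than $c\,\tau^{-\frac12-\frac N{2p}}e^{-\lambda'\tau}$; with the additional $e^{-\tau}$ from the shift this changes nothing in the convergence of your integrals.
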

\begin{proof}
We use the standard estimate for the Neumann heat
semigroup (\cite[Lemma 1.3]{win_aggregationvs}) to conclude that with some $c_1>0$
\begin{align*}
\|\nabla \ve(\cdot,t)\|_{L^{\infty}(\Omega)}&\le \|\nabla
e^{t\triangle} \ve(\cdot,0)\|_{L^{\infty}(\Omega)}+\int^t_0 \|\nabla
e^{(t-s)\triangle}\ue(\cdot,s)\ve(\cdot,s)\|_{L^{\infty}(\Omega)}\\
&\le \norm[\Lom\infty]{\na v_0}+c_1\int^t_0
c_1\Big(1+(t-s)^{-\f{1}{2}-\f N{2p}}\Big)e^{-\lambda_1(t-s)}\|\ue(\cdot,s)\ve(\cdot,s)\|_{L^{p}(\Omega)} \quad \text{for all } t\in(0,T),
\end{align*}
where $\lambda_1$ denotes the first nonzero eigenvalue of
$-\Delta$ in $\Omega$  under the homogeneous Neumann boundary
conditions. Due to Lemma \ref{lem:normvdecreases} and the condition on $\ue$, we obtain $c_2>0$ such that
\begin{equation}\label{vlinftybound}
 \norm[\Lom \infty]{\na \ve(\cdot,t)}\leq c_2 \quad \text{for all } t\in (0,T).
\end{equation}
In order to obtain a bound for $\ue$, we use the
variation-of-constants formula to represent $\ue(\cdot,t)$ as
\begin{eqnarray}\label{duhamel.u}
\ue(\cdot,t)&=&e^{(t-t_0)\Delta}\ue(\cdot,t_0)-\int^t_{t_0}
e^{(t-s)\Delta}\nabla\cdot(\ue(\cdot,s)\nabla \ve(\cdot,s))ds\nn\\
&\quad&+\int^t_{t_0}e^{(t-s)\Delta}(\kappa \ue(\cdot,s)-\mu
\ue^2(\cdot,s)-\eps \ue^2(\cdot,s)\ln  a\ue(\cdot,s))ds,
\end{eqnarray}
for each $t\in (0,T)$, where $t_0=(t-1)_{+}$.

Due to the estimate
\[
 \kappa s-\mu s^2-\eps s^2\ln  as\leq \frac1{2a^2e} + \sup_{\xi >0} (\kappa\xi -\mu \xi ^2)=:c_3,
\]
positivity of the heat semigroup ensures that
\begin{equation}\label{uestimate:inhomogeneity}
 \int_{t_0}^t e^{(t-s)\Delta }(\kappa\ue(\cdot,s)-\mu \ue^2(\cdot,s)-\eps \ue^2(\cdot,s)\ln  a\ue(\cdot,s)) \leq c_3(t-t_0)\leq c_3.
\end{equation}
Moreover, from the maximum principle we can easily infer that
\begin{equation}\label{uestimate:initdata.smallt}
 \norm[\Lom\infty]{e^{(t-t_0)\Delta} \ue(\cdot,t_0)}=\norm[\Lom\infty]{e^{t\Delta} u_0} \leq \norm[\Lom\infty]{u_0} \text{ if } t\in[0,2]
\end{equation}
and that with $c_4>0$ taken from \cite[Lem. 1.3]{win_aggregationvs}
\begin{equation}\label{uestimate:initdata.larget}
 \norm[\Lom\infty]{e^{(t-t_0)\Delta}\ue(\cdot,t_0)}\leq \norm[\Lom\infty]{\ue(\cdot,t_0)}\leq \norm[\Lom\infty]{e^{1\cdot\Delta} \ue(\cdot,t_0-1)} \leq c_4 (1+1^{-\frac N2})
  \norm[\Lom 1]{\ue(\cdot,t_0-1)}\leq c_4 m_1,
\end{equation}
whenever $t>2$ and with $m_1$ as in \eqref{2.1}.

Finally, we estimate the second integral on the right hand of
\eqref{duhamel.u}.  \cite[Lemma 1.3]{win_aggregationvs} provides $c_5>0$ fulfilling
\begin{align}\label{uestimate:chemotaxisterm}
\Big\|\int^t_{t_0} e^{(t-s)\Delta}\nabla\cdot(\ue(\cdot,s)\nabla
\ve(\cdot,s))ds\Big\|_{L^{\infty}(\Omega)}&\le c_5\int^t_{t_0}
(t-s)^{-\frac{1}{2}-\frac{N}{2p}}\|\ue(\cdot,s)\nabla
\ve(\cdot,s)\|_{L^{p}(\Omega)}ds\nn\\
&\leq c_5M c_2\int_0^1\sigma^{-\frac12-\frac N{2p}} d\sigma
=:c_6
\end{align}
for $t\in(0,T)$. In view of \eqref{duhamel.u}, \eqref{uestimate:initdata.smallt}, \eqref{uestimate:initdata.larget}, \eqref{uestimate:chemotaxisterm}, we have obtained that
\[
 0\leq \ue(\cdot,t)\leq \max\set{\norm[\Lom\infty]{u_0}, c_4m_1} + c_6 + c_3
\]
holds for any $t\in(0,T)$, which combined with \eqref{vlinftybound} is the desired conclusion.
\end{proof}

In fact, the assumption of Lemma \ref{lem:ulp.to.boundedness} suffices for even higher regularity, as we will see in Lemma \ref{lem:holder}.
For the moment we return to the proof of global existence of solutions.

\begin{lem}\label{lem:ge.for.positive.eps.or.large.mu}
 Let $\eps \in(0,1)$ and let $a$ be as in \eqref{defa} or let $\eps =0$ and $\mu >k_1(N,N)\norm[\Lom\infty]{\chi v_0}^{\frac2N}+k_2(N,N)\norm[\Lom\infty]{\chi v_0}^{2N}$, where $k_1$, $k_2$ are as in Lemma \ref{l3.5}. Then the classical solution to \eqref{epssys} given by Lemma \ref{criterion} is global and bounded.
\end{lem}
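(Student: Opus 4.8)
The plan is to concatenate the a priori estimates already established with the extensibility criterion \eqref{extcrit}. Concretely, I would fix an exponent $p>N$, use Lemma \ref{l3.5} to bound $\io \ue^p$ uniformly on $(0,\Tmax)$, invoke Lemma \ref{lem:ulp.to.boundedness} to upgrade this to an $L^\infty$-bound for $\ue$ and $\na\ve$, and finally feed the resulting $W^{1,\infty}$-control of $\ve$ (together with Lemma \ref{lem:normvdecreases}) into \eqref{extcrit} in order to rule out $\Tmax<\infty$. Since all the bounds so obtained are uniform in $t$, the same argument simultaneously yields boundedness.

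In the case $\eps\in(0,1)$ I would simply pick any $p>N$ and apply the second alternative of Lemma \ref{l3.5}: irrespective of whether \eqref{cond:mularge} holds, this produces some $c_\eps>0$ with $\io \ue^p+\io|\na\ve|^{2p}\leq c_\eps$ on $(0,\Tmax)$, and in particular $\io\ue^p\leq c_\eps$ there.

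The case $\eps=0$ is where the only genuine subtlety lies. The largeness hypothesis is phrased with $k_1(N,N),k_2(N,N)$, that is, it is exactly \eqref{cond:mularge} evaluated at $p=N$; but Lemma \ref{lem:ulp.to.boundedness} requires the stronger integrability stemming from an $L^p$-bound with $p>N$ strictly (so that $\int_0^1\sigma^{-\frac12-\frac N{2p}}\,d\sigma<\infty$, as used in its proof). To bridge this gap I would exploit that $p\mapsto k_1(p,N)$ and $p\mapsto k_2(p,N)$, as well as the exponents $\tfrac2p$ and $2p$, are continuous. As the hypothesis is a strict inequality, the right-hand side of \eqref{cond:mularge} depends continuously on $p$ at $p=N$, so there is some $p>N$ for which \eqref{cond:mularge} persists. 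For this $p$ the first alternative of Lemma \ref{l3.5} supplies a constant $C$, depending on $\mu,\chi,v_0$ but uniform in $\eps\in[0,1)$ and hence valid for $\eps=0$, with $\io \ue^p+\io|\na\ve|^{2p}\leq C$ on $(0,\Tmax)$.

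In either case I now have $p>N$ and $\io \ue^p(\cdot,t)\leq M$ for all $t\in(0,\Tmax)$, with $M$ independent of $t$. By the positivity asserted in Lemma \ref{criterion} the hypotheses of Lemma \ref{lem:ulp.to.boundedness} are met with $T=\Tmax$, whence $\norm[\Lom\infty]{\ue(\cdot,t)}+\norm[\Lom\infty]{\na\ve(\cdot,t)}\leq C'$ for all $t\in(0,\Tmax)$. Combining this with $\norm[\Lom\infty]{\ve(\cdot,t)}\leq\norm[\Lom\infty]{v_0}$ from Lemma \ref{lem:normvdecreases} and using that $\Om$ is bounded, I obtain a uniform bound on $\norm[\Lom\infty]{\ue(\cdot,t)}+\norm[W^{1,q}(\Om)]{\ve(\cdot,t)}$ on $(0,\Tmax)$ for the exponent $q>N$ fixed in Lemma \ref{criterion}. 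Were $\Tmax<\infty$, this would contradict \eqref{extcrit}; hence $\Tmax=\infty$, and the very same uniform bound establishes boundedness. The only delicate point is the passage from $p=N$ to some $p>N$ in the $\eps=0$ case, which is precisely what the strictness of the largeness assumption on $\mu$ secures; everything else is a direct chaining of the preceding lemmas.
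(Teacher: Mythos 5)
Your proposal is correct and follows essentially the same route as the paper's proof: the paper likewise uses continuity of $p\mapsto k_1(p,N)\norm[\Lom\infty]{\chi v_0}^{\frac2p}+k_2(p,N)\norm[\Lom\infty]{\chi v_0}^{2p}$ to pass from the strict hypothesis at $p=N$ to some $p>N$, then chains Lemma \ref{l3.5}, Lemma \ref{lem:ulp.to.boundedness} and Lemma \ref{lem:normvdecreases} into the extensibility criterion \eqref{extcrit}. Your write-up merely makes explicit the case distinction for $\eps\in(0,1)$ (second alternative of Lemma \ref{l3.5}) that the paper leaves implicit.
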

\begin{proof}
 By continuity, there is $p>N$ such that $\mu >k_1(p,N)\norm[\Lom\infty]{\chi v_0}^{\frac2p}+k_2(p,N)\norm[\Lom\infty]{\chi v_0}^{2p}$, and Lemma \ref{l3.5} shows that $\io \ue^p$ is bounded on $(0,\Tmax)$. Lemma \ref{lem:ulp.to.boundedness} together with Lemma \ref{lem:normvdecreases} turns this into a uniform bound on $\norm[\Lom \infty]{\ue(\cdot,t)}+\norm[W^{1,\infty}(\Om)]{\ve(\cdot,t)}$ on $(0,\Tmax)$, so that the extensibility criterion \eqref{extcrit} shows that $\Tmax=\infty$.
\end{proof}

\begin{proof}[Proof of Theorem \ref{thm1}] Theorem \ref{thm1} is the case $\eps =0$ in Lemma \ref{lem:ge.for.positive.eps.or.large.mu}.\end{proof}

\section{Stabilization}\label{sec:stabilization}

In this section, we shall consider the large time asymptotic
stabilization of any global classical bounded solution.

In a first step we derive uniform Hölder bounds that will facilitate convergence. After that, we have to ensure that solutions actually converge, and in particular must identify their limit.
In the spirit of the persistence-of-mass result in \cite{taowin_persistence}, showing that $v\to 0$ as $t\to \infty$ would be possible by relying on a uniform lower bound for $\io u$ and finiteness of $\intninf\io uv$ (see also \cite[Lemmata 3.2 and 3.3]{lankeit_fluid}). We will instead focus on other information that can be obtained from the following functional of type already employed in \cite{lankeit_fluid} (after the example of \cite{win_ksns_logsource}), namely
\begin{equation}\label{eq:defF}
 \calF_{\eps}(t):=\int_{\Omega}\ue(\cdot,t)-\f{\kappa}{\mu }\int_{\Omega}\ln \ue(\cdot,t)+\f{\kappa}{2\mu }\int_{\Omega}\ve^2(\cdot,t).
\end{equation}
This way, in Lemma \ref{lem:vtozero} we will achieve a convergence result for $\ve$ that will also be useful in the investigation of the large time behaviour of weak solutions in Section \ref{sec:weaksol}.

\begin{lem}\label{lem:holder}
 Let $\eps\in[0,1)$, $\mu >0$, $\chi >0$, $\kappa\in\mathbb{R} $. Let $(\ue,\ve)\in C^0(\Ombar\times [0,\infty))\cap C^{2,1}(\Ombar\times(0,\infty))$ be a solution to \eqref{epssys} with $a$ as in \eqref{defa} which is bounded in the sense that there exists $M>0$ such that
 \begin{equation}\label{eq:boundednessconditionforregularity}
  \norm[\Lom\infty]{\ue(\cdot,t)}+\norm[W^{1,\infty}(\Om)]{\ve(\cdot,t)}\leq M \quad \text{for all } t\in(0,\infty).
 \end{equation}
 Then there are $\alpha\in(0,1)$ and $C>0$ such that
 \[
  \normm{C^{\alpha,\frac{\alpha}2}(\Ombar\times[t,t+1])}{\ue} + \normm{C^{2+\alpha,1+\frac\alpha2}(\Om\times[t,t+1])}{\ve}\leq C \qquad \text{for all } t\in (2,\infty).
 \]
\end{lem}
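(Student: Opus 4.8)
The plan is to run the standard parabolic bootstrap that by now is classical in the chemotaxis literature: first obtain a uniform \emph{Hölder} bound for $\ue$ by treating its equation as a scalar divergence-form parabolic problem with bounded data, and then upgrade the regularity of $\ve$ via Schauder theory once the coefficient $\ue\ve$ in its equation is known to be Hölder continuous.

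First I would rewrite the first equation of \eqref{epssys} in divergence form
\[
 \uet = \na\cdot\kl{\na\ue - \chi\ue\na\ve} + f_\eps, \qquad f_\eps := \kappa\ue - \mu\ue^2 - \eps\ue^2\ln a\ue,
\]
and observe that, under \eqref{eq:boundednessconditionforregularity}, the flux contribution $\chi\ue\na\ve$ is bounded in $\Lom\infty$ uniformly in space and time, while $f_\eps$ is bounded as well: indeed $0\le\ue\le M$ forces the continuous map $s\mapsto s^2\ln as$ to range over a compact set, and $\eps\in[0,1)$, so $\norm[\Lom\infty]{f_\eps}$ is controlled by a constant depending only on $M$, $\kappa$, $\mu$ and $a$, uniformly in $t$ and $\eps$. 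Since moreover $\delny\ue=0$ on $\dOm$, the De Giorgi--Nash--Moser theory for divergence-form parabolic equations with bounded measurable coefficients and bounded right-hand side, in its form adapted to the homogeneous Neumann boundary condition, yields some $\alpha_1\in(0,1)$ and a constant depending only on $M,N,\Om$ such that $\normm{C^{\alpha_1,\frac{\alpha_1}2}(\Ombar\times[t,t+1])}{\ue}$ is bounded independently of $t$ and $\eps$.

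Next I would note that the second equation $\vet=\Lap\ve-\ue\ve$ has bounded right-hand side $\ue\ve$, again by \eqref{eq:boundednessconditionforregularity}, so the same Hölder theory provides $\alpha_2\in(0,1)$ with $\ve\in C^{\alpha_2,\frac{\alpha_2}2}$ uniformly; setting $\alpha:=\min\set{\alpha_1,\alpha_2}$, both $\ue$ and $\ve$ then lie in $C^{\alpha,\frac\alpha2}$ on every $\Ombar\times[t,t+1]$ with a $t$- and $\eps$-independent bound. Consequently the product $\ue\ve$ is in $C^{\alpha,\frac\alpha2}$ with a uniform bound, and the second equation becomes a linear inhomogeneous heat equation with Hölder continuous source and homogeneous Neumann data. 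Parabolic Schauder estimates then deliver the claimed bound $\normm{C^{2+\alpha,1+\frac\alpha2}(\Om\times[t,t+1])}{\ve}\le C$, uniformly in $t$ and $\eps$, which together with the Hölder bound on $\ue$ gives the assertion.

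The restriction to $t>2$ enters because the interior-in-time regularity estimates on $\Ombar\times[t,t+1]$ rely on a parabolic cylinder reaching backward in time (roughly to $t-1$); taking $t>2$ keeps this cylinder inside the region $(0,\infty)$ on which $(\ue,\ve)$ is classical and the uniform bounds \eqref{eq:boundednessconditionforregularity} hold, safely away from the merely continuous initial data. The only genuine point requiring care is the \emph{uniformity} of all constants in $t$ and $\eps$: this is guaranteed because every estimate above depends on the data exclusively through the single bound $M$ in \eqref{eq:boundednessconditionforregularity}, together with the fixed quantities $N,\Om,\kappa,\mu,a,\chi$, and the divergence structure of the chemotaxis term is precisely what keeps the first equation within the scope of the De Giorgi--Nash--Moser theory despite the absence of any smallness or sign condition on its coefficients.
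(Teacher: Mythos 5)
Your proposal is correct and takes essentially the same route as the paper: uniform space-time Hölder bounds for both $\ue$ and $\ve$ via De Giorgi--Nash--Moser type theory (the paper cites \cite[Thm. 1.3]{porzio_vespri}, verifying its structure conditions with precisely your bounded flux $\chi\ue\na\ve$ and bounded source terms), followed by parabolic Schauder estimates to upgrade $\ve$ to $C^{2+\alpha,1+\frac\alpha2}$. The only difference is technical rather than conceptual: where you invoke interior-in-time Schauder estimates as a known black box, the paper obtains the required $t$-uniformity explicitly by multiplying $\ve$ with a smooth time cutoff $\zeta(t-t_0)$ vanishing near $t_0$, so that $\zeta\ve$ has zero initial data and the global Schauder theorem \cite[Thm. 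IV.5.3]{LSU} applies on each cylinder with a constant independent of $t_0$.
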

\begin{proof}
Due to the time-uniform ($L^\infty(\Om)$-)bound on $\ve$ and on the right hand side of
\[
 \vet-\Delta \ve = \ue\ve \text{ in } \Om\times[t,t+2], \quad \delny \ve\amrand=0
\]
of which $\ve$ is a weak solution, \cite[Thm. 1.3]{porzio_vespri} immediately yields $\alpha _1\in(0,1)$ and $c_1>0$ such that
\[
 \normm{C^{\alpha _1,\frac{\alpha _1}2}(\Ombar\times[t+1,t+2])}{\ve}\leq c_1
\]
for any $t>0$.

Similarly, \eqref{eq:boundednessconditionforregularity} provides $t$-independent bounds on the functions $\psi _0:=\frac12\chi ^2\ue^2|\na \ve|^2$, $\psi _1:=\chi \ue|\na \ve|$, $\psi _2:=|\kappa|\ue-\mu \ue^2-\eps \ue^2\ln  a\ue$ in conditions (A$_1$), (A$_2$), (A$_3$) of \cite{porzio_vespri}. An application of \cite[Thm. 1.3]{porzio_vespri} to solutions of
\[
 \uet - \na\cdot(\na \ue-\chi \ue\na \ve) = \kappa\ue-\mu \ue^2-\eps \ue^2\ln  a\ue \text{ in } \Om\times[t,t+2], \delny \ue\amrand=0
\]
therefore provides $\alpha _2\in(0,1)$, $c_2>0$ such that
\[
 \normm{C^{\alpha _2,\frac{\alpha _2}2}(\Ombar\times[t+1,t+2])}{\ue}\leq c_2
\]
for any $t>0$.

We pick a monotone increasing function $\zeta \in C^\infty(\mathbb{R} )$ such that $\zeta |_{(-\infty,\frac12)}\equiv 0$, $\zeta|_{(1,\infty)}\equiv 1$ and note that, for any $t_0>1$, the function $(x,t)\mapsto \zeta (t-t_0)\ve(x,t)$ belongs to $C^{2,1}(\Ombar\times[t_0,t_0+2])$ and satisfies
\[
 (\zeta \ve)_t=\Delta (\zeta \ve) - \ue\zeta \ve + \zeta '\ve, \quad (\zeta \ve)(\cdot,t_0)=0, \quad \delny(\zeta \ve)\amrand=0.
\]
Due to the uniform bound for $\ue\zeta \ve + \zeta '\ve$ in some Hölder space, an application of \cite[Thm. IV.5.3]{LSU} (together with \cite[Thm. III.5.1]{LSU}) ensures the existence of $\alpha _3\in(0,1)$ and $c_3>0$ such that
\[
 \normm{C^{2+\alpha _3,1+\frac{\alpha _3}2}(\Ombar\times[t_0+1,t_0+2])}{\ve}=\normm{C^{2+\alpha _3,1+\frac{\alpha _3}2}(\Ombar\times[t_0+1,t_0+2]}{\zeta \ve}\leq\normm{C^{2+\alpha _3,1+\frac{\alpha _3}2}(\Ombar\times[t_0,t_0+2]}{\zeta \ve}\leq c_3
\]
for any $t_0>1$.
\end{proof}

\begin{lem}\label{lem:ddtF}
Let $u_0$, $v_0$ satisfy \eqref{id} and assume that $\mu >0$, $\kappa>0$, $\chi >0$, $a=\f{\mu }{\kappa}$ (as in \eqref{defa}). Then for any $\eps \in[0,1)$ any solution $(\ue,\ve)\in C^{2,1}(\Ombar\times(0,\infty))\cap C^0(\Ombar\times[0,\infty))$ of \eqref{epssys} satisfies
\begin{equation}\label{FODI}
 \calF_\eps'(t)+\mu \io\kl{\ue-\frac{\kappa}{\mu }}^2\leq 0 \qquad \text{for all } t\in(0,\infty)
\end{equation}
and, consequently, there is $C>0$ such that for any $\eps \in [0,1)$
\begin{equation}\label{eq:ueminlimit2.bounded}
 \intninf \io \kl{\ue-\frac{\kappa}{\mu }}^2 \leq C.
\end{equation}

\end{lem}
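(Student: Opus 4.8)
The plan is to differentiate the three summands of $\calF_\eps$ in \eqref{eq:defF} separately, insert the two equations of \eqref{epssys}, and integrate by parts under the homogeneous Neumann conditions so that all Laplacian and divergence boundary contributions drop out. From $\ddt\io\ue$ the diffusion and chemotaxis terms vanish entirely, leaving $\kappa\io\ue-\mu\io\ue^2-\eps\io\ue^2\ln(a\ue)$. In $-\f{\kappa}{\mu}\ddt\io\ln\ue=-\f{\kappa}{\mu}\io\f{\uet}{\ue}$ the two integrations by parts produce $-\f{\kappa}{\mu}\io\f{|\na\ue|^2}{\ue^2}$ from the Laplacian and the mixed term $\f{\kappa\chi}{\mu}\io\f{\na\ve\cdot\na\ue}{\ue}$ from the chemotactic part, together with the algebraic pieces $-\f{\kappa^2}{\mu}|\Om|+\kappa\io\ue+\f{\kappa\eps}{\mu}\io\ue\ln(a\ue)$. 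Finally $\f{\kappa}{2\mu}\ddt\io\ve^2=\f{\kappa}{\mu}\io\ve\vet$ gives $-\f{\kappa}{\mu}\io|\na\ve|^2-\f{\kappa}{\mu}\io\ue\ve^2$. Positivity of $\ue$ from Lemma \ref{criterion} ensures that $\ln\ue$, $\f1\ue$ and $\f{|\na\ue|^2}{\ue^2}$ are well defined and integrable, so this termwise differentiation is legitimate.

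I would then sort the outcome into four groups. The zeroth-order algebraic terms collapse into a square, $2\kappa\io\ue-\mu\io\ue^2-\f{\kappa^2}{\mu}|\Om|=-\mu\io\kl{\ue-\f{\kappa}{\mu}}^2$, which is exactly the dissipation in \eqref{FODI}. The two $\eps$-contributions combine to $-\eps\io\ue\kl{\ue-\f{\kappa}{\mu}}\ln(a\ue)$; here the choice $a=\f{\mu}{\kappa}$ is decisive, because then $\ln(a\ue)$ and $\ue-\f{\kappa}{\mu}$ share the same sign (both negative for $\ue<\f{\kappa}{\mu}$, both positive for $\ue>\f{\kappa}{\mu}$), making the integrand nonnegative and this group $\le0$. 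The consumption term $-\f{\kappa}{\mu}\io\ue\ve^2$ is manifestly nonpositive by $\ue\ge0$.

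The one substantive point is the gradient group $-\f{\kappa}{\mu}\io\f{|\na\ue|^2}{\ue^2}+\f{\kappa\chi}{\mu}\io\f{\na\ve\cdot\na\ue}{\ue}-\f{\kappa}{\mu}\io|\na\ve|^2$, which I would show to be nonpositive by estimating the mixed term with Young's inequality so that it is absorbed by the two squared gradients; with the normalization of the chemotactic coefficient underlying \eqref{epssys} this leaves a negative quadratic form in $\f{\na\ue}{\ue}$ and $\na\ve$, hence a nonpositive contribution. Adding the four groups yields \eqref{FODI}. This is the step I expect to carry the real weight, since the sign hinges precisely on the coefficients in $\calF_\eps$ being matched to the equation.

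For \eqref{eq:ueminlimit2.bounded} I would integrate \eqref{FODI} over $(0,T)$ to obtain $\mu\int_0^T\io\kl{\ue-\f{\kappa}{\mu}}^2\le\calF_\eps(0)-\calF_\eps(T)$ and then bound the right-hand side uniformly in $\eps$ and $T$. The initial value $\calF_\eps(0)=\io u_0-\f{\kappa}{\mu}\io\ln u_0+\f{\kappa}{2\mu}\io v_0^2$ is finite and $\eps$-independent, as $u_0>0$ on the compact set $\Ombar$. For a uniform lower bound I would use that $s\mapsto s-\f{\kappa}{\mu}\ln s$ is minimized at $s=\f{\kappa}{\mu}$, so $\io\ue-\f{\kappa}{\mu}\io\ln\ue\ge|\Om|\kl{\f{\kappa}{\mu}-\f{\kappa}{\mu}\ln\f{\kappa}{\mu}}$, and since $\f{\kappa}{2\mu}\io\ve^2\ge0$ this gives $\calF_\eps(T)\ge|\Om|\kl{\f{\kappa}{\mu}-\f{\kappa}{\mu}\ln\f{\kappa}{\mu}}$ for all $T$ and all $\eps\in[0,1)$. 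Passing to $T\to\infty$ then delivers \eqref{eq:ueminlimit2.bounded} with a constant $C$ depending only on the data, $\kappa$ and $\mu$.
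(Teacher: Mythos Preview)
Your approach is essentially the paper's: differentiate the three summands of $\calF_\eps$, integrate by parts, recognize the completed square $-\mu\io(\ue-\kappa/\mu)^2$, observe that the two $\eps$-contributions combine to $-\eps\io\ue(\ue-\kappa/\mu)\ln(a\ue)\le 0$ precisely because $a=\mu/\kappa$, and control the mixed gradient term by Young's inequality so that it is absorbed by $-\frac{\kappa}{\mu}\io\frac{|\na\ue|^2}{\ue^2}$ and $-\frac{\kappa}{\mu}\io|\na\ve|^2$. The paper carries out exactly these steps.

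One remark on the gradient group: you correctly keep the factor $\chi$ in the cross term $\frac{\kappa\chi}{\mu}\io\frac{\na\ue\cdot\na\ve}{\ue}$, whereas the paper's displayed computation silently drops it (effectively using $\chi=1$, as in system \eqref{a}). Your phrase ``with the normalization of the chemotactic coefficient underlying \eqref{epssys}'' gestures at this but does not resolve it: for the functional \eqref{eq:defF} as written, the quadratic form $|X|^2-\chi X\cdot Y+|Y|^2$ in $X=\na\ue/\ue$, $Y=\na\ve$ is nonnegative only when $\chi\le 2$. For general $\chi>0$ the fix is to replace $\frac{\kappa}{2\mu}\io\ve^2$ in \eqref{eq:defF} by $\frac{\kappa\chi^2}{2\mu}\io\ve^2$; this leaves \eqref{FODI} and \eqref{eq:ueminlimit2.bounded} intact. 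Both your proposal and the paper gloss over this point in the same way.

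Your derivation of \eqref{eq:ueminlimit2.bounded} via integration of \eqref{FODI} and the pointwise lower bound $s-\frac{\kappa}{\mu}\ln s\ge\frac{\kappa}{\mu}\bigl(1-\ln\frac{\kappa}{\mu}\bigr)$ is more explicit than the paper, which merely writes ``which implies \eqref{FODI}'' and leaves the integration and the uniform lower bound on $\calF_\eps$ to the reader.
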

 \begin{proof}
In fact, on $(0,\infty)$
\begin{align*}
\calF_\eps'&=\int_{\Omega}
\uet-\f{\kappa}{\mu }\int_{\Omega}\frac{\uet}{\ue}+\f{\kappa}{\mu }\int_{\Omega}\ve\vet
\nn \\
&= \kappa\int_{\Omega}
\ue-\mu\int_{\Omega}\ue^2 - \eps \io \ue^2\ln  a\ue -\f{\kappa}{\mu }\int_{\Omega}\frac{\Delta
\ue}{\ue}+\f{\kappa}{\mu }\int_{\Omega}\frac{\nabla \ue\cdot \nabla
\ve}{\ue}-\f{\kappa^2}{\mu }\int_{\Omega} 1+\kappa\int_{\Omega}
\ue\\
&+\f{\kappa}{\mu }\int_{\Omega}\ve\Delta
\ve-\f{\kappa}{\mu }\int_{\Omega}\ue\ve^2
+\eps \f{\kappa}{\mu }\io \ue\ln  a\ue
\end{align*}
Because $\eps s(\f{\kappa}{\mu }-s)\ln (\f{\mu }{\kappa}s)$ is negative for any $s>0$, we obtain
\begin{align*}
\calF_\eps'&\le
-\mu\int_{\Omega}\Big(\ue-\f{\kappa}{\mu }\Big)^2-\f{\kappa}{\mu }\int_{\Omega}\frac{|\nabla
\ue|^2}{\ue^2}+\f{\kappa}{2\mu }\int_{\Omega}\frac{|\nabla
\ue|^2}{\ue^2}+\f{\kappa}{2\mu }\int_{\Omega}|\nabla
\ve|^2-\f{\kappa}{\mu }\int_{\Omega}|\nabla
\ve|^2-\f{\kappa}{\mu }\int_{\Omega}\ue\ve^2\nn\\
&=
-\mu\int_{\Omega}\Big(\ue-\f{\kappa}{\mu }\Big)^2-\f{\kappa}{2\mu }\int_{\Omega}\frac{|\nabla
\ue|^2}{\ue^2}-\f{\kappa}{2\mu }\int_{\Omega}|\nabla
\ve|^2-\f{\kappa}{\mu }\int_{\Omega}\ue\ve^2
\end{align*}
on $(0,\infty)$, which implies \eqref{FODI}.
\end{proof}

Building upon \eqref{eq:ueminlimit2.bounded} and the second equation of \eqref{epssys}, we can now acquire decay information about $\ve$:
\begin{lem}\label{lem:vtozero}
Let $\chi >0$, $\kappa>0$, $\mu >0$, let $u_0$ and $v_0$ satisfy \eqref{id} and moreover set $a:=\frac{\mu }{\kappa}$. Then for every $p\in[1,\infty)$ and every $\eta >0$ there is $T>0$ such that for every $t>T$ and every $\eps \in[0,1)$ every global classical solution $(\ue,\ve)$ of \eqref{epssys} satisfies
\begin{equation}\label{eq:vtozerolp}
 \norm[\Lom p]{\ve(\cdot,t)}<\eta .
\end{equation}
\end{lem}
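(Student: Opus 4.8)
The plan is to work throughout with the single quantity $y_\eps(t):=\frac12\io \ve^2(\cdot,t)$, to first establish \eqref{eq:vtozerolp} for $p=2$ with $T$ independent of $\eps$, and only afterwards to interpolate. Testing the second equation of \eqref{epssys} against $\ve$ and integrating by parts yields
\[
 y_\eps'(t)=-\io |\na \ve|^2 - \io \ue\ve^2 \leq -\io \ue\ve^2 \leq 0 \qquad \text{on } (0,\infty),
\]
so that $y_\eps$ is nonincreasing and, upon integration over $(0,\infty)$,
\[
 \intninf \io \ue\ve^2 \leq y_\eps(0)=\tfrac12\io v_0^2,
\]
a bound that is uniform with respect to $\eps\in[0,1)$ because the right-hand side does not involve $\eps$.

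Next I would split the reaction term around the expected limit $\frac{\kappa}{\mu }$: invoking Lemma \ref{lem:normvdecreases} and the Cauchy--Schwarz inequality,
\[
 \frac{\kappa}{\mu } \io \ve^2 = \io \ue\ve^2 - \io\kl{\ue-\frac{\kappa}{\mu }}\ve^2 \leq \io \ue\ve^2 + \norm[\Lom\infty]{v_0}^2|\Om|^{\frac12}\kl{\io\kl{\ue-\frac{\kappa}{\mu }}^2}^{\frac12}.
\]
Together with the uniform estimate \eqref{eq:ueminlimit2.bounded} of Lemma \ref{lem:ddtF}, this pointwise inequality controls $\io\ve^2$ against two quantities whose $\eps$-uniform integral bounds are already at hand.

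The crucial --- and most delicate --- point is the uniformity of $T$ in $\eps$, which I would secure by a contradiction argument exploiting the monotonicity of $y_\eps$. If the $L^2$-version of \eqref{eq:vtozerolp} failed uniformly, there would be $\eta_0>0$, $\eps_n\in[0,1)$ and $t_n\to\infty$ with $y_{\eps_n}(t_n)\geq\eta_0$; monotonicity then forces $y_{\eps_n}(s)\geq\eta_0$ for all $s\in[0,t_n]$, so that (using $\io\ve^2=2y_\eps$) integration of the pointwise inequality of the preceding step over $[0,t_n]$ gives $\frac{2\kappa}{\mu }\eta_0 t_n\leq \frac{2\kappa}{\mu }\int_0^{t_n}y_{\eps_n}\leq C_1 + c\,t_n^{1/2}$, where $C_1:=\frac12\io v_0^2$ and where the $\sqrt{t_n}$-term arises from $\int_0^{t_n}\kl{\io(\ue-\frac{\kappa}{\mu })^2}^{\frac12}\leq t_n^{1/2}\kl{\intninf\io(\ue-\frac{\kappa}{\mu })^2}^{1/2}\leq C^{1/2}t_n^{1/2}$, all constants being independent of $n$. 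Dividing by $t_n$ and letting $n\to\infty$ produces $\frac{2\kappa}{\mu }\eta_0\leq0$, a contradiction. I expect this to be the heart of the matter: it is precisely the clash between the \emph{linear}-in-time lower bound forced by monotonicity and the mere $\sqrt{t}$-growth of the error (the unavoidable cost of passing from the $L^1$-in-time control of $\io(\ue-\frac{\kappa}{\mu })^2$ to its square root) that delivers a time $T$ valid simultaneously for all $\eps$.

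Finally, the passage from $p=2$ to arbitrary $p\in[1,\infty)$ is routine and preserves uniformity. For $p\geq2$ one estimates, by Lemma \ref{lem:normvdecreases}, $\norm[\Lom p]{\ve}^p\leq\norm[\Lom\infty]{\ve}^{p-2}\norm[\Lom 2]{\ve}^2\leq\norm[\Lom\infty]{v_0}^{p-2}\norm[\Lom 2]{\ve}^2$, while for $1\leq p<2$ Hölder's inequality gives $\norm[\Lom p]{\ve}\leq|\Om|^{\frac1p-\frac12}\norm[\Lom 2]{\ve}$; in both cases the right-hand side is governed by $\norm[\Lom 2]{\ve}$, for which a uniform-in-$\eps$ threshold time $T$ has just been produced, and the proof concludes.
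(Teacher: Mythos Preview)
Your proof is correct and mirrors the paper's strategy: monotonicity of a norm of $\ve$, splitting the absorption term around $\frac{\kappa}{\mu}$, Cauchy--Schwarz against the $\eps$-uniform bound \eqref{eq:ueminlimit2.bounded} from Lemma~\ref{lem:ddtF}, and interpolation with the $L^\infty$ bound of Lemma~\ref{lem:normvdecreases}. The only cosmetic differences are that the paper works with $\io \ve$ rather than $\io \ve^2$ and, in place of your contradiction argument, derives an explicit decay rate directly via the averaging inequality $\io \ve(\cdot,t)\leq\frac1t\intnt\io \ve$.
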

\begin{proof}
 By Lemma \ref{lem:ddtF} we find $c_1>0$ such that for any $\eps\in[0,1)$ we have $\intninf\io \kl{\frac{\kappa}{\mu }-\ue}^2<c_1$.
 Integrating the second equation of \eqref{epssys} shows that
\[
 (0,\infty)\ni t\mapsto \io \ve(\cdot,t)
\]
 is decreasing, and that, moreover,
\[
 \frac{\kappa}{\mu }\intnt\io \ve + \intnt\io\kl{\ue-\frac{\kappa}{\mu }}\ve = \intnt \io \ue\ve \leq \io v_0
\]
 for any $t>0$.
 We conclude that for any $t>0$ and any $\eps \in[0,1)$
\begin{align*}
 \io \ve(\cdot,t) \leq \frac1t\intnt \io \ve&\leq\frac{\mu }{\kappa t} \io v_0 + \frac{\mu }{\kappa t}\intnt\io \kl{\frac{\kappa}{\mu }-\ue}\ve\\
 &\leq \frac{\mu }{\kappa t}\io v_0 + \frac{\mu }{\kappa t}\sqrt{\intnt\io v^2}\sqrt{\intnt\io \kl{\frac{\kappa}{\mu }-\ue}^2}\\
&\leq \frac{\mu }{\kappa t}\io v_0 +\frac{\mu }{\kappa t} \sqrt{\norm[\Lom\infty]{v_0}^2|\Om|t} \sqrt{\intninf\io\kl{\frac{\kappa}{\mu }-\ue}^2}\\
 &\leq \frac{\mu }{\kappa t}\io v_0 +\frac{\mu \sqrt{|\Om|}\norm[\Lom\infty]{v_0}\sqrt{c_1}}{\kappa\sqrt{t}}
\end{align*}
and hence already have that $\io \ve(\cdot,t)$ converges to $0$ as $t\to\infty$, uniformly with respect to $\eps $. In order to obtain \eqref{eq:vtozerolp}, we invoke the additional interpolation
\[
 \norm[\Lom p]{\ve(\cdot,t)}\leq \norm[\Lom\infty]{\ve(\cdot,t)}^{\frac{p-1}p}\norm[\Lom 1]{\ve(\cdot,t)}^{\frac1p}\leq \norm[\Lom\infty]{v_0}^{\frac{p-1}p}\norm[\Lom 1]{\ve(\cdot,t)}^{\frac1p},
\]
valid for any $t>0$.
\end{proof}

A combination of the previous lemmata in this section reveals the large time behaviour of bounded classical solutions:

\begin{lem}\label{lem:convergence.classical}
Let $\kappa>0$, $\mu >0$, $\chi >0$ and let $u_0$, $v_0$ satisfy \eqref{id}. For any solution $(u,v)\in C^{2,1}(\Ombar\times(0,\infty))\cap C^0(\Ombar\times[0,\infty))$ of \eqref{a} that satisfies the boundedness condition \eqref{eq:boundednessconditionforregularity}, we have
\begin{equation}\label{eq:convergencestatement}
 u(\cdot,t)\to \frac{\kappa}{\mu } \quad \text{in } C^0(\Ombar),\quad  v(\cdot,t)\to 0\quad \text{in }C^2(\Ombar).
\end{equation}
as $t\to \infty $.
\end{lem}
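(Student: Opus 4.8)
The plan is to combine the $L^p$-to-$L^\infty$ regularity machinery with the convergence information already extracted from the energy functional $\calF$, upgrading the weak convergences to the uniform ones claimed in \eqref{eq:convergencestatement}. First I would invoke Lemma \ref{lem:holder} (with $\eps=0$), which under the boundedness hypothesis \eqref{eq:boundednessconditionforregularity} furnishes $\alpha\in(0,1)$ and $C>0$ with uniform H\"older bounds
\[
 \normm{C^{\alpha,\frac\alpha2}(\Ombar\times[t,t+1])}{u} + \normm{C^{2+\alpha,1+\frac\alpha2}(\Om\times[t,t+1])}{v}\leq C \qquad\text{for all } t>2.
\]
This compactness is the essential lever: it means that any subsequential limit along $t_k\to\infty$ converges in the stronger topologies $C^0(\Ombar)$ for $u$ and $C^2(\Ombar)$ for $v$, so it suffices to pin down the limits in weaker senses and then argue by contradiction.

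Next I would identify the limit of $v$. Lemma \ref{lem:vtozero} (applied with $\eps=0$) already gives $\norm[\Lom p]{v(\cdot,t)}\to 0$ as $t\to\infty$ for every $p\in[1,\infty)$; combining this $L^p$-decay with the uniform $C^{2+\alpha}$-bound just obtained and an interpolation (or a contradiction argument against the $C^2$-compactness) yields $v(\cdot,t)\to 0$ in $C^2(\Ombar)$. For $u$, the key quantitative input is \eqref{eq:ueminlimit2.bounded} from Lemma \ref{lem:ddtF}, namely $\intninf\io(u-\frac\kappa\mu)^2<\infty$, which forces $\int_t^{t+1}\io(u-\frac\kappa\mu)^2\,ds\to 0$ as $t\to\infty$, and in particular the existence of a sequence $t_k\to\infty$ with $\norm[\Lom2]{u(\cdot,t_k)-\frac\kappa\mu}\to 0$.

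The main obstacle, and the step I would treat most carefully, is passing from convergence along a sequence (in $L^2$) to convergence of the whole trajectory in $C^0(\Ombar)$. The clean way is a standard contradiction argument: if $u(\cdot,t)\not\to\frac\kappa\mu$ in $C^0(\Ombar)$, there would exist $\eta>0$ and times $s_k\to\infty$ with $\norm[\Lom\infty]{u(\cdot,s_k)-\frac\kappa\mu}\geq\eta$. By the uniform $C^{\alpha,\frac\alpha2}$-bound and Arzel\`a--Ascoli, a subsequence of $u(\cdot,s_k)$ converges in $C^0(\Ombar)$ to some limit $u_\infty$ with $\norm[\Lom\infty]{u_\infty-\frac\kappa\mu}\geq\eta$, whence $\norm[\Lom2]{u_\infty-\frac\kappa\mu}>0$. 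On the other hand, the H\"older equicontinuity in time together with $\int_t^{t+1}\io(u-\frac\kappa\mu)^2\to 0$ shows that $\norm[\Lom2]{u(\cdot,s_k)-\frac\kappa\mu}\to 0$, so $u_\infty=\frac\kappa\mu$ a.e., a contradiction. This delivers $u(\cdot,t)\to\frac\kappa\mu$ in $C^0(\Ombar)$.

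Finally I would assemble these two convergences into the single statement \eqref{eq:convergencestatement}, noting that the hypotheses of Lemmata \ref{lem:holder}, \ref{lem:ddtF} and \ref{lem:vtozero} are all met here (the solution is a global bounded classical solution of \eqref{a}, i.e. \eqref{epssys} with $\eps=0$, and $\kappa>0$ so that $a=\frac\mu\kappa$ is admissible). The only real subtlety beyond bookkeeping is the interplay between temporal equicontinuity and the time-averaged $L^2$-smallness in the contradiction argument, so I would make sure the H\"older-in-time bound is used to control $\sup_{s\in[s_k,s_k+1]}\norm[\Lom2]{u(\cdot,s)-u(\cdot,s_k)}$ and thereby transfer the averaged smallness to the specific times $s_k$.
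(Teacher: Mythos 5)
Your strategy is correct and rests on exactly the same three pillars as the paper's proof --- the uniform H\"older bounds of Lemma \ref{lem:holder}, the dissipation bound \eqref{eq:ueminlimit2.bounded} from Lemma \ref{lem:ddtF}, and the $L^p$-decay of Lemma \ref{lem:vtozero} --- but it organizes the compactness step differently, and one step needs repair. The paper passes to the shifted functions $u_j(\cdot,\tau):=u(\cdot,j+\tau)$, $v_j(\cdot,\tau):=v(\cdot,j+\tau)$ on the fixed cylinder $\Ombar\times[0,1]$, extracts subsequential limits in $C^0(\Ombar\times[0,1])\times C^2(\Ombar\times[0,1])$, identifies them as $(\f{\kappa}{\mu},0)$ because $\int_0^1\io\kl{u_{j_k}-\f{\kappa}{\mu}}^2\to 0$ by \eqref{eq:ueminlimit2.bounded} and this integral passes to the limit under uniform convergence on the cylinder, and concludes by the subsequence principle; this device sidesteps precisely the subtlety you flag, since uniform convergence in space-time lets the time-integrated information identify the limit with no need to transfer averaged smallness to particular time slices. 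In your slice-wise argument that transfer, as you describe it, does not work: the H\"older-in-time bound controls $\norm[\Lom 2]{u(\cdot,s)-u(\cdot,s_k)}$ only by $c|s-s_k|^{\alpha/2}$, so the supremum over the \emph{whole} interval $[s_k,s_k+1]$ is merely bounded by $c$, not small, and cannot convert $\int_{s_k}^{s_k+1}\io\kl{u-\f{\kappa}{\mu}}^2\to 0$ into smallness at $s=s_k$. The standard repair is to work on a short subinterval: your Arzel\`a--Ascoli step gives $\norm[\Lom 2]{u(\cdot,s_{k})-\f{\kappa}{\mu}}\geq \eta'>0$ along the (relabeled) subsequence; choosing $\delta>0$ with $c\delta^{\alpha/2}\leq \eta'/2$ yields $\norm[\Lom 2]{u(\cdot,s)-\f{\kappa}{\mu}}\geq \eta'/2$ for all $s\in[s_k,s_k+\delta]$, hence $\int_{s_k}^{s_k+1}\io\kl{u-\f{\kappa}{\mu}}^2\geq \frac{\delta(\eta')^2}{4}$ for all $k$, contradicting \eqref{eq:ueminlimit2.bounded}. (Equivalently: $t\mapsto\norm[\Lom 2]{u(\cdot,t)-\f{\kappa}{\mu}}^2$ is uniformly continuous on $(2,\infty)$ by the H\"older bounds and integrable by \eqref{eq:ueminlimit2.bounded}, hence tends to $0$ by Barbalat's lemma.) With this fix your proof is complete; the remaining steps --- the $v$-part via interpolation between the uniform $C^{2+\alpha}$-bound and the $L^p$-decay, and the identification of subsequential limits --- are sound and parallel the paper's reasoning.
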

\begin{proof} For $j\in\mathbb{N} $ we define
\[
 u_j(x,\tau ):=u(x,j+\tau ), \qquad v_j(x,\tau ):=v(x,j+\tau ),\qquad x\in\Ombar, \tau \in[0,1].
\]
We let $(j_k)_{k\in \mathbb{N}}\subset\mathbb{N} $ be a sequence
satisfying $j_k\to \infty $ as $k\to \infty $. By Lemma
\ref{lem:holder} there are $\alpha \in(0,1)$, $C>0$ such that
\[
 \normm{C^{\alpha,\frac{\alpha }2}(\Ombar\times[0,1])}{u_{j_k}}\leq C, \quad \normm{C^{2+\alpha,1+\frac{\alpha}2}(\Ombar\times[0,1])}{v_{j_k}}\leq C
\]
for all $k\in \mathbb{N} $ and hence there are $u,v\in C^{\alpha
,\frac{\alpha }2}(\Ombar\times[0,1])$ such that $u_{j_{k_l}}\to u$
in $C^0(\Ombar\times[0,1])$ and $v_{j_{k_l}}\to v$ in
$C^2(\Ombar\times[0,1])$ as $l\to \infty $ along a suitable
subsequence. According to \eqref{eq:ueminlimit2.bounded} and
Lemma \ref{lem:vtozero} $u\equiv \frac{\kappa}{\mu }$, $v\equiv
0$. Because every subsequence of $((u_j,v_j))_{j\in\mathbb{N} }$
contains a subsequence converging to $(\f{\kappa}{\mu },0)$, we
conclude that $(u_j,v_j)\to(\f{\kappa}{\mu },0)$ in
$C^0(\Ombar\times[0,1])\times C^2(\Ombar\times[0,1])$ and hence, a
fortiori, \eqref{eq:convergencestatement}.
\end{proof}

\begin{proof}[Proof of Theorem \ref{thm3}]
 The statement of Lemma \ref{lem:convergence.classical} is even slightly stronger than that of Theorem \ref{thm3}.
\end{proof}

\section{Weak solutions}\label{sec:weaksol}
Purpose of this section is the construction of weak solutions to \eqref{a}, in those cases, where Theorem \ref{thm1} is not applicable. To this end let us first state what a weak solution is supposed to be:

\begin{dnt}\label{def:weaksol}
 A weak solution to \eqref{a} for initial data $(u_0,v_0)$ as in \eqref{id} is a pair $(u,v)$ of functions
 \begin{align*}
  u\in L^2_{loc}(\Ombar\times[0,\infty)) \quad \text{ with } \quad \na u \in L^1_{loc}(\Ombar\times [0,\infty)),\\
  v\in L^\infty(\Om\times(0,\infty))\quad \text{ with } \quad \na v \in L^2(\Om\times(0,\infty))
 \end{align*}
 such that, for every $\varphi \in C_0^{\infty}(\Ombar\times[0,\infty))$,
 \begin{align*}
  -\intninf\io u\varphi _t -\io u_0\varphi (\cdot,0) &= -\intninf\io \na u\cdot \na \varphi  + \chi \intninf\io u\na v\cdot\na\varphi +\kappa\intninf\io u\varphi -\mu \intninf\io u^2\varphi \\
  -\intninf\io v\varphi _t -\io v_0\varphi (\cdot,0) &= -\intninf\io \na v\cdot \na \varphi  - \intninf \io uv\varphi
 \end{align*}
 hold true.
\end{dnt}

Some of the estimates neeeded for the compactness arguments in the construction of these weak solutions will spring from the following quasi-energy inequality:

\begin{lem}\label{lem:energyfunctional}
 Let $\mu ,\chi \in(0,\infty)$, $\kappa\in\mathbb{R} $ and let $(u_0,v_0)$ satisfy \eqref{id}.
 There are constants $k_1>0$, $k_2>0$ such that for any $\eps \in(0,1)$ the solution of \eqref{epssys} with $a$ as in \eqref{defa} satisfies
\begin{align}\label{eq:ddtuloguplusnavdv}
 \ddt&\kl{\io \ue\ln  \ue + \frac{\chi }{2} \io \frac{|\na \ve|^2}{\ve}}\nn\\
& + \io \frac{|\na \ue|^2}{\ue} + k_1 \io \frac{|\na \ve|^4}{\ve^3} + k_1 \io \ve|D^2\ln  \ve|^2 + \frac{\mu }2\io \ue^2\ln  \ue+ \eps \io \ue^2\ln  a\ue\ln  \ue \nn\\
 &\leq k_2\io \ve+k_3
\end{align}
on $(0,\infty)$.
\end{lem}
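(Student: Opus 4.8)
The plan is to derive a coupled differential inequality by combining the natural entropy functional $\io \ue\ln \ue$ for the first component with a Lyapunov-type functional $\io \frac{|\na \ve|^2}{\ve}$ for the second. First I would test the $\ue$-equation against $\ln \ue + 1$ (equivalently differentiate $\io \ue\ln \ue$), which produces the dissipation term $\io \frac{|\na \ue|^2}{\ue}$, a chemotaxis cross term $\chi\io \na \ue\cdot\na \ve$, the logistic contributions $\kappa\io \ue\ln \ue - \mu\io \ue^2\ln \ue$, and the regularizing term $-\eps\io \ue^2\ln a\ue\ln \ue$. The dangerous cross term $\chi\io \na \ue\cdot\na \ve$ should be integrated by parts and rewritten so that it can be absorbed; a cleaner route is to integrate by parts to move the derivative onto $\ve$, turning it into $-\chi\io \ue\Delta \ve$, and then invoke the second equation to replace $\Delta \ve = \vet + \ue\ve$.

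Next I would treat the functional $\frac{\chi}{2}\io \frac{|\na \ve|^2}{\ve}$, whose time derivative is the more delicate computation. Using $\vet = \Delta \ve - \ue\ve$ and the pointwise identity for $\frac{\mathrm{d}}{\mathrm{d}t}\frac{|\na \ve|^2}{\ve}$, integration by parts yields, after some manipulation, the boundary term handled by Lemma \ref{lem:elementary.estimates} b). That lemma is exactly what converts the problematic expressions $-2\io \frac{|\Lap \ve|^2}{\ve} + \io \frac{|\na \ve|^2\Lap \ve}{\ve^2}$ into the good dissipation terms $-k\io \ve|D^2\ln \ve|^2 - k\io \frac{|\na \ve|^4}{\ve^3}$ plus a harmless multiple of $\io \ve$. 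The chemotactic coupling is arranged precisely so that the bad contribution generated when differentiating $\io \ue\ln \ue$ cancels against a matching term arising from $\frac{\chi}{2}\io \frac{|\na \ve|^2}{\ve}$; I expect the coefficient $\frac{\chi}{2}$ to be chosen exactly to force this cancellation of the term involving $\io \ue \na \ve$ against $\io \frac{\na \ue\cdot\na \ve}{\ve}$ after both functionals are differentiated.

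The main obstacle, and the technical heart of the proof, will be controlling the remaining cross terms that mix $\ue$, $\na \ve$, and $\Delta \ve$ after the two derivatives are added. Specifically, one is left with terms like $\io \frac{\ue|\na \ve|^2}{\ve}$ and $\io \ue^2\ve$ (or similar) that are not obviously sign-definite; these must be absorbed into the dissipation terms $\io \frac{|\na \ve|^4}{\ve^3}$, $\io \ve|D^2\ln \ve|^2$, and $\frac{\mu}2\io \ue^2\ln \ue$ using Young's inequality together with the pointwise $L^\infty$-bound $\ve\le \norm[\Lom\infty]{v_0}$ from Lemma \ref{lem:normvdecreases}. The uniform boundedness of $\ve$ is essential here, as it lets one estimate $\frac{1}{\ve}$-weighted quantities against the quartic gradient term. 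The $-\frac{\mu}2\io \ue^2\ln \ue$ on the left (which I keep as dissipation) absorbs the positive part of the logistic interaction, while any leftover terms that are superlinear in $\ue$ but lack a favorable sign should be dominated by the available negative powers, with everything not so controllable collected into the linear reservoir $k_2\io \ve + k_3$.

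Finally, I would verify that the $\eps$-regularization term $\eps\io \ue^2\ln a\ue\ln \ue$ is kept on the dissipation side with the correct sign (it is nonnegative for $\ue$ large and only boundedly negative near the origin, so any negative contribution merges into the constant $k_3$), and collect the constants $k_1, k_2, k_3$ so that they are independent of $\eps\in(0,1)$; this $\eps$-uniformity is what makes the inequality useful for the subsequent compactness arguments in the construction of weak solutions.
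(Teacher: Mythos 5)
Your proposal follows essentially the same route as the paper: differentiate the coupled functional $\io \ue\ln\ue + \frac{\chi}{2}\io\frac{|\na\ve|^2}{\ve}$, exploit the exact cancellation of the chemotactic cross terms (the coefficient $\frac{\chi}{2}$ is indeed chosen for precisely this), apply Lemma \ref{lem:elementary.estimates}~b) to the resulting expression $-2\io\frac{|\Delta\ve|^2}{\ve}+\io\frac{|\na\ve|^2\Delta\ve}{\ve^2}$, and absorb the logistic and $\eps$-terms into $k_3$ by sacrificing half of $\mu\io\ue^2\ln\ue$. Two of your details are imprecise but harmless: the cancelling pair is $\chi\io\na\ue\cdot\na\ve$ against $-\chi\io\na\ue\cdot\na\ve$ (equivalently $\mp\chi\io\ue\Delta\ve$), not the pair you name, and the leftover couplings $-\io\frac{\ue|\na\ve|^2}{\ve}$ and $-\chi\io\ue^2\ve$ come with favorable signs by nonnegativity of $\ue$ and $\ve$, so they may simply be dropped rather than absorbed via Young's inequality.
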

\begin{proof}
According to Lemma \ref{lem:ge.for.positive.eps.or.large.mu}, for any $\eps \in(0,1)$, the solution to \eqref{epssys} is global, and from the second equation of \eqref{epssys} we obtain that

 \begin{align}\label{eq:ddtnavdv}
  \ddt \io \frac{|\na \ve|^2}{\ve} &= 2\io \frac{\na \ve \na \vet}\ve - \io \frac{|\na \ve|^2}{\ve^2}\vet\nn\\
 &=-2\io \frac{\Delta \ve \vet}{\ve} + 2\io \frac{|\na \ve|^2}{\ve^2} \vet -\io \frac{|\na \ve|^2}{\ve^2}\vet\nn\\
&= -2\io \frac{|\Delta \ve|^2}\ve + 2\io \ue\Delta \ve +\io \frac{|\na \ve|^2}{\ve^2}\Delta \ve - \io \frac{|\na \ve|^2}{\ve}\ue\nn\\
 &\leq -2\io \frac{|\Delta \ve|^2}\ve - 2\io \na \ue\cdot \na \ve + \io \frac{|\na \ve|^2}{\ve^2}\Delta \ve\quad\text{on } (0,\infty).
 \end{align}
Here we may rely on Lemma \ref{lem:elementary.estimates} b) to obtain $k_1>0$, $k_2>0$ such that
\[
 \ddt \io \frac{|\na \ve|^2}{\ve}\leq  -2\io \na \ue\cdot \na \ve - \frac{2k_1}{\chi }\io \ve|D^2\ln \ve|^2-\frac{2k_1}{\chi }\io \frac{|\na \ve|^4}{\ve^3} + \frac{2k_2}{\chi } \io \ve \quad \text{on } (0,\infty).
\]
Concerning the entropy term, we compute
\begin{align}\label{eq:ddtulogu}
 \ddt \io \ue\ln  \ue &= \io \uet \ln  \ue + \kappa\io \ue - \mu \io \ue^2 -\eps \io \ue^2\ln  a\ue\nn\\
 &= -\io \frac{|\na \ue|^2}\ue  + \chi \io \na \ue\cdot \na \ve + \kappa\io \ue\ln  \ue - \mu \io \ue^2\ln  \ue-\eps \io \ue^2\ln  a\ue\ln  \ue\nn\\
 &\quad+\kappa\io \ue-\mu \io \ue^2-\eps \io \ue^2\ln  a\ue \quad \text{on } (0,\infty ).
\end{align}
Additionally, $s^2\ln  as>-\frac{1}{2a^2e}$ for all $s\in(0,\infty)$, so that for all $\eps \in(0,1)$
we have $-\eps (s^2\ln  as)<\frac1{2a^2e}$. Since moreover
$\lim_{s\to \infty}(\kappa s-\mu s^2+\kappa s\ln  s-\frac{\mu
}2s^2\ln  s)=-\infty $, we can find $k_3>0$ such that
\[
 \kappa s\ln  s -\frac{\mu }{2}s^2\ln  s +\kappa s-\mu s^2-\eps s^2\ln  as \leq \frac{k_3}{|\Om|}
\]
for any $s\geq 0$ and $\eps \in(0,1)$. Inserting this into the sum of \eqref{eq:ddtulogu} and a multiple of \eqref{eq:ddtnavdv}, we obtain \eqref{eq:ddtuloguplusnavdv}.
\end{proof}

The following lemma serves as collection of the bounds we have prepared:

\begin{lem}\label{lem:bounds}
 Let $\mu >0$, $\chi >0$, $\kappa\in\mathbb{R} $ and suppose that $u_0$, $v_0$ satisfy \eqref{id}.
 Then there is $C>0$ and for any $T>0$ and $q>N$ there is $C(T)>0$ such that for any $\eps \in(0,1)$ the solution $(\ue,\ve)$ of \eqref{epssys} with $a$ as in \eqref{defa} satisfies
\begin{align}
 \int_0^T \io \ue^2\leq C(T)\label{bd:ul2}\\
 \int_0^T \io \frac{|\na \ue|^2}u\leq C(T)\label{bd:nau2u}\\
 \int_0^T \io |\na \ue|^\frac43\leq C(T)\label{bd:nau}\\
 \int_0^T \io \ue^2\ln  a\ue \leq C(T)\label{bd:u2logu}\\
 \int_0^T \io \eps \ue^2(\ln  \ue)\ln  a\ue\leq C(T)\label{bd:epsu2logu2}\\
 \int_0^T \io |\na \ve|^4 \leq C\label{bd:nav4}\\
 \int_0^{\infty } \io |\na \ve|^2\leq C\label{bd:nav}\\
 \norm[ L^\infty(\Om\times (0,\infty ))]{\ve}\leq C\label{bd:v}\\
 \norm[L^2((0,T);(W_0^{1,2}(\Om))^\ast)]{\vet}\leq C(T)\label{bd:vt}\\
 \norm[L^1((0,T);(W_0^{1,2}(\Om))^\ast)]{\uet}\leq C(T)\label{bd:ut}
\end{align}
If, moreover $\kappa>0$, then there is $C>0$ such that for any $\eps \in(0,1)$ the solution $(\ue,\ve)$ of \eqref{epssys} with $a=\f{\mu}{\kappa}$ as in \eqref{defa} satisfies
\begin{equation}
  \int_0^\infty\io \kl{\ue-\f{\kappa}{\mu }}^2 \leq C.\label{bd:uminlimit2}.
\end{equation}
\end{lem}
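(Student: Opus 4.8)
The plan is to collect the bounds in Lemma \ref{lem:bounds} by harvesting the dissipation terms from the quasi-energy inequality \eqref{eq:ddtuloguplusnavdv} and combining them with the elementary a-priori estimates established in Sections \ref{sec:locex-andbasic} and \ref{sec:stabilization}. The logical backbone is that \eqref{eq:ddtuloguplusnavdv} contains, on its left-hand side and after integration in time, nearly all the quantities we wish to bound, while its right-hand side $k_2\io\ve+k_3$ is controllable thanks to the already-known $L^\infty$-bound on $\ve$ from Lemma \ref{lem:normvdecreases}.

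First I would integrate \eqref{eq:ddtuloguplusnavdv} over $(0,T)$. Since $\norm[\Lom\infty]{\ve(\cdot,t)}\leq\norm[\Lom\infty]{v_0}$ (Lemma \ref{lem:normvdecreases}), the right-hand side is bounded by $c(1+T)$; to close the estimate I need a lower bound on the left-hand boundary contribution $\io\ue\ln\ue+\frac{\chi}2\io\frac{|\na\ve|^2}{\ve}$ at time $T$, which follows because $\io\ue\ln\ue\geq-c$ (using $s\ln s\geq-\frac1e$ together with the mass bound $\io\ue\leq m_\eps\leq m_1$ from Lemma \ref{lu1}) and because the $\frac{|\na\ve|^2}{\ve}$-term is nonnegative. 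This immediately delivers the three dissipation bounds \eqref{bd:nau2u}, \eqref{bd:epsu2logu2}, and (via the term $\frac\mu2\io\ue^2\ln\ue$, after noting $s^2\ln s$ is bounded below) \eqref{bd:u2logu}, from which \eqref{bd:ul2} follows since $\ue^2\leq\ue^2\ln(a\ue)+C$ pointwise. The inequality $|\na\ue|^{4/3}=\kl{\frac{|\na\ue|^2}{\ue}}^{2/3}\ue^{2/3}$ together with Young's inequality and \eqref{bd:ul2}, \eqref{bd:nau2u} yields \eqref{bd:nau}.

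For the $v$-estimates: \eqref{bd:v} is just Lemma \ref{lem:normvdecreases}, while \eqref{bd:nav} comes directly from Lemma \ref{ltv2} after integrating the differential inequality \eqref{2.tv1} in time and using that the bad term $\io\ve^2(\ue-1)^2$ is controlled by $\int_0^\infty\io|\Delta\ve|^2$ and the $\ue$-integrals, or more simply by recognizing that the dissipation $\frac{|\na\ve|^4}{\ve^3}$ in \eqref{eq:ddtuloguplusnavdv} together with $\ve\leq C$ gives $\int_0^T\io|\na\ve|^4\leq C\int_0^T\io\frac{|\na\ve|^4}{\ve^3}$, establishing \eqref{bd:nav4}; I would then deduce \eqref{bd:nav} over the infinite time horizon from Lemma \ref{ltv2}'s time-uniform bound combined with the integrated dissipation. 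The dual-norm bounds \eqref{bd:vt} and \eqref{bd:ut} follow by testing the two equations of \eqref{epssys} against $\varphi\in W_0^{1,2}(\Om)$: for $\vet$ one estimates $\norm{(W_0^{1,2})^\ast}{\vet}\leq\norm[\Lom2]{\na\ve}+\norm[\Lom2]{\ue\ve}$ and squares and integrates using \eqref{bd:nav}, \eqref{bd:ul2}, \eqref{bd:v}; for $\uet$ one uses the $L^1$-in-time structure, estimating the chemotaxis and reaction terms by \eqref{bd:nau2u}, \eqref{bd:nav4} (through $\io\ue|\na\ve|\leq(\io\frac{|\na\ue|^2}{\ue})^{1/2}(\io\ue|\na\ve|^2)^{1/2}$ or a Hölder split) and \eqref{bd:u2logu}. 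Finally, \eqref{bd:uminlimit2} is simply a restatement of \eqref{eq:ueminlimit2.bounded} from Lemma \ref{lem:ddtF}, valid in the case $\kappa>0$, $a=\frac\mu\kappa$.

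The main obstacle I anticipate is the $L^1$-in-time bound \eqref{bd:ut} on $\uet$: unlike $\vet$, the reaction term $\mu\ue^2$ and the chemotactic flux $\chi\ue\na\ve$ are only controlled in spacetime-integral norms of low exponent, so one must carefully match the test-function pairing $\io\ue\na\ve\cdot\na\varphi$ against the available dissipation $\int_0^T\io\frac{|\na\ue|^2}{\ue}$ and the $L^4$-gradient bound on $\ve$, choosing the Hölder exponents so that every factor lands in a space where Lemma \ref{lem:bounds}'s earlier estimates apply; this is exactly why only an $L^1$ (rather than $L^2$) bound in time is asserted for $\uet$.
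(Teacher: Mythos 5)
Most of your plan coincides with the paper's actual proof: \eqref{bd:v} is Lemma \ref{lem:normvdecreases}; \eqref{bd:ul2}, \eqref{bd:nau2u}, \eqref{bd:u2logu}, \eqref{bd:epsu2logu2} and \eqref{bd:nav4} come from integrating the quasi-energy inequality of Lemma \ref{lem:energyfunctional} exactly as you describe; \eqref{bd:nau} is the same H\"older split; \eqref{bd:uminlimit2} is read off from \eqref{FODI}; and your treatment of \eqref{bd:vt} is the paper's. Two items, however, contain genuine gaps.

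First, \eqref{bd:nav}. Your proposed derivation --- integrating \eqref{2.tv1} and invoking Lemma \ref{ltv2} --- cannot yield a bound over the infinite time horizon: Lemma \ref{ltv2} is only a pointwise-in-time bound, and \eqref{2.tv1} carries the source terms $\norm[\Lom\infty]{v_0}^2\io\ue^2+2\norm[\Lom\infty]{v_0}^2\io\ue+\norm[\Lom\infty]{v_0}^2$, whose time integral over $(0,\infty)$ is infinite (the bound \eqref{bd:ul2} grows linearly in $T$, and $\io\ue$ is merely bounded, not integrable in time); the same objection defeats the idea of controlling $\io\ve^2(\ue-1)^2$ by ``the $\ue$-integrals''. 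The paper's argument is different and essential here: test the second equation of \eqref{epssys} by $\ve$ itself, so that
\begin{equation*}
 \frac12\,\ddt\io\ve^2=-\io|\na\ve|^2-\io\ue\ve^2\leq-\io|\na\ve|^2,
\end{equation*}
where both terms on the right are signed; integrating gives $\int_0^\infty\io|\na\ve|^2\leq\frac12\io v_0^2$, uniformly in $\eps$ and in time.

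Second, \eqref{bd:ut}. Pairing $\uet$ against $\varphi\in W_0^{1,2}(\Om)$, as you propose, cannot close: the terms $\mu\io\ue^2\varphi$ and $\eps\io\ue^2\ln(a\ue)\varphi$ require $\varphi\in\Lom\infty$, since $\ue^2$ and $\ue^2\ln(a\ue)$ are controlled only in $L^1(\Om\times(0,T))$, and $W_0^{1,2}(\Om)\not\hookrightarrow\Lom\infty$ for $N\geq2$; the diffusion term, paired as $\io\na\ue\cdot\na\varphi$, would need $\na\ue\in\Lom2$ whereas only \eqref{bd:nau} is available; and your displayed inequality $\io\ue|\na\ve|\leq\kl{\io\frac{|\na\ue|^2}{\ue}}^{1/2}\kl{\io\ue|\na\ve|^2}^{1/2}$ is not the relevant pairing (Cauchy--Schwarz with those two factors bounds $\io|\na\ue|\,|\na\ve|$, not the chemotaxis term $\io\ue\na\ve\cdot\na\varphi$). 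This is precisely why the statement of the lemma introduces $q>N$: the paper takes $\varphi\in L^\infty((0,T);W_0^{2,q}(\Om))$ of norm at most $1$, so that by Sobolev embedding $\varphi$ and $\na\varphi$ are bounded in $\Lom\infty$, and it integrates the diffusion term by parts twice so that only $\io\ue\Delta\varphi$ appears; every term is then estimated using \eqref{bd:ul2}, \eqref{bd:nav} and \eqref{bd:u2logu}, yielding the bound on $\uet$ in $L^1((0,T);(W_0^{2,q}(\Om))^\ast)$ --- the space in which the paper actually proves \eqref{bd:ut}, and which suffices for the compactness argument of Lemma \ref{lem:weaksol}. With only $\ue\in L^2$, $\na\ue\in L^{4/3}$ and $\na\ve\in L^4$ in spacetime at hand, no choice of H\"older exponents rescues the $W_0^{1,2}$-duality version you sketch.
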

\begin{proof}
 Bondedness of $\ve$ as in \eqref{bd:v} has been shown in Lemma \ref{lem:normvdecreases}; \eqref{bd:ul2}, \eqref{bd:nau2u}, \eqref{bd:u2logu}, \eqref{bd:epsu2logu2} result from Lemma \ref{lem:energyfunctional} by straightforward integration, as well as \eqref{bd:nav4} if Lemma \ref{lem:normvdecreases} is taken into account. Testing the second equation in \eqref{epssys} by $\ve$, \eqref{bd:nav} is readily obtained. By an application of Hölder's inequality, \eqref{bd:nau} immediately follows from \eqref{bd:ul2} and \eqref{bd:nau2u}. Moreover, \eqref{bd:uminlimit2} is a consequence of \eqref{FODI}.
 For any $\varphi \in C_0^\infty(\Ombar\times[0,T))$ we have
\begin{align*}
 \intnT\io \vet\varphi  &= -\intnT\io \na \varphi \cdot \na \ve - \intnT\io \varphi \ue\ve \\
 &\leq \norm[L^2(\Om\times(0,T))]{\na \varphi }\norm[L^2(\Om\times(0,T))]{\na \ve} + \norm[L^\infty(\Om\times(0,T))]{\ve}\norm[L^2(\Om\times(0,T))]{\ue}\norm[L^2(\Om\times(0,T))]{\varphi }
\end{align*}
and -- by \eqref{bd:ul2}, \eqref{bd:nav}, \eqref{bd:v} -- hence \eqref{bd:vt}. In order to obtain \eqref{bd:ut}, we let $\varphi \in (L^1((0,T);(W_0^{2,q}(\Om))^\ast))^\ast = L^\infty((0,T);W_0^{2,q}(\Om))$ with $\norm[L^\infty((0,T);W_0^{2,q}(\Om))]{\varphi }\leq 1$ and have
\begin{align*}
 \intnT\io \uet \varphi &=\intnT\io \ue \Delta \varphi  + \chi \intnT\io \ue\na \ve\cdot\na\varphi  + \kappa\intnT\io \ue\varphi -\mu \intnT\io \ue^2\varphi +\eps \intnT\io \varphi \ue^2\ln  a\ue\\
 &\leq \norm[L^2(\Om\times(0,T))]{\ue}\norm[L^2(\Om\times(0,T))]{\Delta \varphi }+\chi \norm[L^2(\Om\times(0,T))]{\ue}\norm[L^2(\Om\times(0,T))]{\na v}\norm[L^\infty(\Om\times(0,T))]{\na \varphi } \\
 &+ |\kappa| \norm[L^2(\Om\times(0,T))]{\ue}\norm[L^2(\Om\times(0,T))]{\varphi }+\mu \norm[L^2(\Om\times(0,T))]{\ue}^2\norm[L^\infty(\Om\times(0,T))]{\varphi } \\
&+ \norm[L^\infty(\Om\times(0,T))]{\varphi }\eps \intnT\io \ue^2|\ln  a\ue|,
\end{align*}
 which, due to \eqref{bd:ul2}, \eqref{bd:nav}, \eqref{bd:u2logu}, proves \eqref{bd:ut}.
\end{proof}

By means of compactness arguments, these estimates allow for the construction of weak solutions. This is to be our next undertaking:

\begin{lem}\label{lem:weaksol}
 Let $\mu >0$, $\chi >0$, $\kappa\in\mathbb{R} $ and assume that $u_0$, $v_0$ satisfy \eqref{id}.
 There are a sequence $(\eps _j)_{j\in \mathbb{N}}$, $\eps _j\searrow 0$ and functions
\begin{align*}
 u&\in L^2_{loc}(\Ombar\times[0,\infty)) \quad \text{ with } \quad \na u\in L^{\frac43}_{loc}(\Ombar\times [0,\infty)),\\
 v&\in L^\infty(\Om\times(0,\infty)) \quad \text{ with }\quad \na v \in L^2(\Om\times(0,\infty))
\end{align*}
such that the solutions $(\ue,\ve)$ of \eqref{epssys} with $a$ as in \eqref{defa} satisfy
\begin{align}
 \ue&\to u & &\text{in } L^{\frac 43}_{loc}([0,\infty);L^{\frac43}(\Om)) \quad \text{ and a.e. in } \Om\times(0,\infty)\label{conv:u}\\
 \na\ue&\weakto \na u&&\text{ in } L^{\frac 43}_{loc}([0,\infty);L^{\frac43}(\Om))\label{conv:nau}\\
 \ue^2&\to u^2&&\text{ in } L^1_{loc}(\Ombar\times[0,\infty))\label{conv:u2}\\
 \eps \ue^2\ln (a\ue)&\to 0&&\text{ in } L^1_{loc}(\Ombar\times[0,\infty))\label{conv:epsu2logu}\\
 \ve &\to v & & \text{a.e. in } \Om\times (0,\infty)\label{conv:v}\\
 \ve &\weakstarto v && \text{in } L^\infty((0,\infty),\Lom p)\quad \text{for any } p\in[1,\infty]\label{conv:vweakstar}\\
 \na \ve&\weakto \na v && \text{in } L^4_{loc}([0,\infty);\Lom4)\label{conv:nav4}\\
 \na\ve&\weakto \na v && \text{in } L^2((0,\infty);\Lom2)\label{conv:nav}
\end{align}
as $\eps =\eps _j\searrow 0$ and such that $(u,v)$ is a weak solution to \eqref{a}.\\
If additionally $\kappa>0$ and $a=\f{\mu }{\kappa}$ as in \eqref{defa}, then $\eps _j$ can be chosen such that additionally
\begin{equation}
 \ue-\frac{\kappa}{\mu }\weakto u-\f{\kappa}{\mu }\qquad\text{ in  } L^2(\Om\times(0,\infty))\label{conv:uminlimit}
\end{equation}
as $\eps =\eps _j\searrow 0$, and
\begin{equation}\label{uminlimitinL2}
 \kl{u-\frac{\kappa}{\mu }}\in L^2(\Om\times(0,\infty))
\end{equation}
\end{lem}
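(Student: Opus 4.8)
The plan is to extract convergent subsequences from the family $(\ue,\ve)_{\eps\in(0,1)}$ of global classical solutions to \eqref{epssys} by exploiting the uniform bounds collected in Lemma \ref{lem:bounds}, and then to pass to the limit in the weak formulation of \eqref{epssys} to recover Definition \ref{def:weaksol}. The final assertions \eqref{conv:uminlimit} and \eqref{uminlimitinL2} in the case $\kappa>0$ will be handled separately at the very end, as an easy consequence of the uniform $L^2$-bound \eqref{bd:uminlimit2} and the pointwise convergence already established. The central technical issue is obtaining \emph{strong} (not merely weak) convergence of $\ue$, since the nonlinear terms $\ue^2$, $\ue\na\ve$ and $\eps\ue^2\ln a\ue$ cannot be handled by weak convergence alone.

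\textbf{Compactness and extraction of the subsequence.}
First I would fix an exhausting sequence of time horizons $T\to\infty$ and argue on each $\Om\times(0,T)$. The bounds \eqref{bd:ul2} and \eqref{bd:nau} give that $(\ue)$ is bounded in $L^{4/3}((0,T);W^{1,4/3}(\Om))$, while \eqref{bd:ut} bounds $(\uet)$ in $L^1((0,T);(W_0^{2,q}(\Om))^\ast)$. An Aubin--Lions--Simon compactness argument then yields a subsequence along which $\ue\to u$ strongly in $L^{4/3}_{loc}([0,\infty);L^{4/3}(\Om))$ and, after passing to a further subsequence, a.e.\ in $\Om\times(0,\infty)$; combined with the reflexivity of $L^{4/3}$ this gives \eqref{conv:u} and the weak gradient convergence \eqref{conv:nau}. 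For $\ve$, the bounds \eqref{bd:v}, \eqref{bd:nav}, \eqref{bd:nav4} together with \eqref{bd:vt} allow an analogous Aubin--Lions argument producing $\ve\to v$ a.e.\ \eqref{conv:v}, the weak-$\ast$ convergence \eqref{conv:vweakstar} from \eqref{bd:v}, and the weak gradient convergences \eqref{conv:nav4}, \eqref{conv:nav} directly from the respective bounds. A diagonal argument over $T\to\infty$ delivers a single sequence $\eps_j\searrow0$ realizing all of these simultaneously.

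\textbf{Upgrading to strong convergence of the nonlinearities.}
The key step is \eqref{conv:u2}: upgrading $\ue^2\to u^2$ in $L^1_{loc}$. Here I would combine the a.e.\ convergence \eqref{conv:u} with the equiintegrability of $(\ue^2)$ supplied by the superquadratic bound \eqref{bd:u2logu} (the term $\ue^2\ln a\ue$ controls high values of $\ue$), and invoke Vitali's convergence theorem on each $\Om\times(0,T)$. The same equiintegrability argument, now applied to $\eps\ue^2\ln a\ue$ and using \eqref{bd:epsu2logu2} together with the explicit prefactor $\eps\to0$, yields \eqref{conv:epsu2logu}; one splits the integral at a threshold $\ue\le K$ (where the integrand is $O(\eps)$ pointwise) and $\ue>K$ (controlled uniformly by \eqref{bd:epsu2logu2}). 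The product $\ue\na\ve$ converges in the distributional sense because $\ue\to u$ strongly in $L^2_{loc}$ (a consequence of \eqref{conv:u2}) while $\na\ve\weakto\na v$ weakly in $L^2_{loc}$, so the product converges weakly in $L^1_{loc}$, which suffices for the weak formulation.

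\textbf{Passage to the limit and the case $\kappa>0$.}
With these convergences in hand, I would write the weak formulation of \eqref{epssys} tested against an arbitrary $\varphi\in C_0^\infty(\Ombar\times[0,\infty))$ and pass to the limit term by term: the linear terms converge by the weak/strong convergences, the quadratic term by \eqref{conv:u2}, the chemotactic term by the product convergence above, and the $\eps$-term vanishes by \eqref{conv:epsu2logu}; this produces exactly the two identities in Definition \ref{def:weaksol}, establishing that $(u,v)$ is a weak solution. Finally, under $\kappa>0$ with $a=\mu/\kappa$, the uniform bound \eqref{bd:uminlimit2} shows that $(\ue-\tfrac{\kappa}{\mu})$ is bounded in $L^2(\Om\times(0,\infty))$; weak compactness gives a weak limit, which the a.e.\ convergence \eqref{conv:u} identifies as $u-\tfrac\kappa\mu$, yielding \eqref{conv:uminlimit}, while weak lower semicontinuity of the $L^2$-norm against the uniform bound gives \eqref{uminlimitinL2}. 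The main obstacle throughout is the strong $L^2_{loc}$-convergence of $\ue$ needed for the quadratic and chemotactic terms; everything hinges on the equiintegrability furnished by the superquadratic entropy bound \eqref{bd:u2logu}, without which weak convergence alone would fail to identify the nonlinear limits.
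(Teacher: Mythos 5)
Your proposal is correct and follows essentially the same route as the paper: Aubin--Lions--Simon compactness built on the bounds of Lemma \ref{lem:bounds}, equiintegrability furnished by the superquadratic bounds \eqref{bd:u2logu} and \eqref{bd:epsu2logu2} combined with Vitali's theorem to handle the nonlinear terms, term-by-term limit passage in the weak formulation, and weak $L^2$-compactness plus a.e.\ identification and lower semicontinuity for the case $\kappa>0$. The only cosmetic deviation is that you treat $\eps\ue^2\ln(a\ue)$ by splitting at a threshold $\set{\ue\le K}$ versus $\set{\ue>K}$ instead of the paper's uniform-integrability estimate, but both arguments rest on exactly the same bound \eqref{bd:epsu2logu2}.
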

\begin{proof}
 \cite[Cor. 8.4]{simon} transforms \eqref{bd:ul2}, \eqref{bd:nau} and \eqref{bd:ut} into \eqref{conv:u} along a suitable sequence $(\eps_j)_j\searrow 0$; the bound in \eqref{bd:nau} enables us to find a further subsequence such that \eqref{conv:nau} holds. Similarly, \eqref{bd:nav} facilitates the extraction of a subsequence satisfying \eqref{conv:nav}, and an analogous application of \cite[Cor. 8.4]{simon} as before from \eqref{bd:nav}, \eqref{bd:v} and \eqref{bd:vt} provides a (non-relabeled) subsequence such that $v_{\eps _j}\to v$ in $L^2(\Om\times(0,\infty))$ and, along another subsequence thereof establishes \eqref{conv:v}. Also \eqref{conv:vweakstar} is immediately obtained from \eqref{bd:v}, as is \eqref{conv:nav4} from \eqref{bd:nav4}; \eqref{conv:uminlimit} results from \eqref{bd:uminlimit2}. For the $L^1$-convergence statements in \eqref{conv:u2} and \eqref{conv:epsu2logu}, mere boundedness, like obtainable from \eqref{bd:nau2u} and \eqref{bd:u2logu}, even if combined with the a.e. convergence provided by \eqref{conv:u}, is insufficient for the existence of a convergent  subsequence; we must, in addition,
check for equi-integrability on $\Om\times(0,T)$ for any
finite $T>0$. To this purpose we note that with $C(T)$ from \eqref{bd:epsu2logu2}
\begin{align*}
 \inf_{b\geq0} \sup_{\eps \in(0,1)} \intnT\int_{\set{\eps \ue^2\ln a \ue>b}} |\eps \ue^2\ln  a \ue| &\leq \inf_{b>a }\sup_{\eps \in(0,1)} \intnT\int_{\set{\eps \ue^2\ln a \ue>b}} \eps \ue^2\ln  a \ue\\
  &\leq \inf_{b>a }\sup_{\eps \in(0,1)} \intnT\int_{\set{a \ue^3>b}} \eps \ue^2\ln  a \ue\\
  &\leq \inf_{b>a }\sup_{\eps \in(0,1)} \intnT\int_{\set{\ln  \ue>\frac13 \ln  \frac ba}} \eps \ue^2(\ln  a\ue)\ln  \ue\cdot \frac 3{\ln  \frac{b}{a}}\\
 &\leq \inf_{b>a } \frac{3C(T)}{\ln  \frac{b}{a}} =0
\end{align*}
and, due to \eqref{bd:u2logu},
\begin{align*}
  \inf_{b\geq 0} \sup_{\eps \in(0,1)} \intnT\int_{\set{\ue^2>b}}\ue^2 \leq \inf_{b>1}\sup_{\eps \in(0,1)}\intnT\int_{\set{\ue^2>b}} \ue^2\ln  \ue \frac{1}{\ln  b} \leq \inf_{b>1} \frac{C(T)}{\ln  b} = 0.
\end{align*}
Accordingly, $\set{ \eps \ue^2\ln \ue; \eps \in(0,1)}$ and $\set{\ue^2; \eps \in(0,1)}$ are uniformly integrable, hence %weakly relatively compact in $L^1(\Om\times(0,T))$ and
by \eqref{conv:u} and the Vitali convergence theorem we can extract subsequences such that \eqref{conv:epsu2logu} and \eqref{conv:u2} hold; \eqref{conv:u2} also proves that $u\in L^2_{loc}(\Ombar\times[0,\infty ))$. Passing to the limit in each of the integrals making up a weak formulation of \eqref{epssys} with $\eps >0$, which is possible due to \eqref{conv:u}, \eqref{conv:nau}, \eqref{conv:nav4}, \eqref{conv:u2}, \eqref{conv:epsu2logu} and \eqref{conv:vweakstar}, shows that $(u,v)$ is a weak solution to \eqref{epssys} with $\eps =0$.
\end{proof}

\begin{proof}[Proof of Theorem \ref{thm:weaksol}]
 The assertion of Theorem \ref{thm:weaksol} is part of Lemma \ref{lem:weaksol}.
\end{proof}

We will finally prove that one can expect at least some stabilization of weak solutions also. Here, the preparation in Lemma \ref{lem:vtozero} obtained from the energy inequality for $\calF$ will be crucial. %to this endeavour.

\begin{lem}\label{lem:weaklimit} Let $\mu >0$, $\chi >0$, $\kappa>0$ and assume that $u_0$, $v_0$ satisfy \eqref{id}.
 The weak solution $(u,v)$ to \eqref{a} obtained in Lemma \ref{lem:weaksol} satisfies
 \begin{equation}\label{eq:weaksolvtozero}
  \norm[\Lom p]{v(\cdot,t)}\to 0
 \end{equation}
 for any $p\in[1,\infty)$ and
\begin{equation}\label{eq:weaksoluconv}
 \int_t^{t+1} \norm[\Lom 2]{u-\frac{\kappa}{\mu }}\to0
\end{equation}
 as $t\to \infty$.
\end{lem}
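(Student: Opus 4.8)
The plan is to establish the two assertions separately, in each case transferring a property already secured for the approximating solutions $(\ue,\ve)$ to the limit pair $(u,v)$ furnished by Lemma \ref{lem:weaksol}.

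For the decay \eqref{eq:weaksolvtozero} of $v$, the decisive input is the \emph{$\eps$-uniform} smallness from Lemma \ref{lem:vtozero}: given $p\in[1,\infty)$ and $\eta>0$, there is $T>0$ such that $\norm[\Lom p]{\ve(\cdot,t)}<\eta$ for all $t>T$ and all $\eps\in[0,1)$, in particular along the sequence $\eps=\eps_j$ of Lemma \ref{lem:weaksol}. I would first use the a.e.\ convergence $\ve\to v$ in $\Om\times(0,\infty)$ recorded in \eqref{conv:v} together with Fubini's theorem, so that for a.e.\ fixed $t>0$ one has $\vej(\cdot,t)\to v(\cdot,t)$ a.e.\ in $\Om$. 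An application of Fatou's lemma then gives $\norm[\Lom p]{v(\cdot,t)}\leq\liminf_{j\to\infty}\norm[\Lom p]{\vej(\cdot,t)}$, and the uniform bound forces $\norm[\Lom p]{v(\cdot,t)}\leq\eta$ for a.e.\ $t>T$. Since $\eta>0$ is arbitrary, this yields \eqref{eq:weaksolvtozero} in the sense that the essential limit of $\norm[\Lom p]{v(\cdot,t)}$ as $t\to\infty$ vanishes.

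For the time-averaged convergence \eqref{eq:weaksoluconv} of $u$, I would simply exploit \eqref{uminlimitinL2}, namely that $\kl{u-\f{\kappa}{\mu}}\in L^2(\Om\times(0,\infty))$, which is available precisely because $\kappa>0$. Setting $g(s):=\norm[\Lom 2]{u(\cdot,s)-\f{\kappa}{\mu}}^2$, this membership reads $\intninf g(s)\,ds<\infty$, whence the tail $\int_t^{t+1}g(s)\,ds\leq\int_t^\infty g(s)\,ds\to0$ as $t\to\infty$. A single application of the Cauchy--Schwarz inequality on the unit interval then produces
\[
 \int_t^{t+1}\norm[\Lom 2]{u(\cdot,s)-\f{\kappa}{\mu}}\,ds=\int_t^{t+1}\sqrt{g(s)}\,ds\leq\kl{\int_t^{t+1}g(s)\,ds}^{\frac12}\longrightarrow 0,
\]
which is exactly \eqref{eq:weaksoluconv}.

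All of the analytical weight has already been carried by Lemma \ref{lem:ddtF} (through \eqref{bd:uminlimit2} and hence \eqref{uminlimitinL2}) and by Lemma \ref{lem:vtozero}, so what remains is entirely a matter of limit passage and elementary integral estimates. The only genuine subtlety, and the step I expect to require the most care, is the first one: it is essential that Lemma \ref{lem:vtozero} provides decay of $\norm[\Lom p]{\ve(\cdot,t)}$ \emph{uniformly in $\eps$}, since it is exactly this uniformity that survives the weak/a.e.\ limit via Fatou's lemma --- an $\eps$-dependent decay threshold would be lost in the limit. One should also note that, because $v$ is only defined almost everywhere, the conclusion \eqref{eq:weaksolvtozero} is naturally read as an essential limit, which is the pertinent notion of convergence in time for a weak solution.
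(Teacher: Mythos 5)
Your proof is correct and follows essentially the same route as the paper: both arguments rest on the $\eps$-uniform decay of Lemma \ref{lem:vtozero} for the $v$-component and on the integrability statement \eqref{uminlimitinL2} (tail of a convergent integral plus Cauchy--Schwarz on the unit interval) for the $u$-component. The only cosmetic difference is the limit-passage mechanism for $v$: you combine the a.e.\ convergence \eqref{conv:v} with Fatou's lemma, whereas the paper tests the weak-$*$ convergence \eqref{conv:vweakstar} against characteristic functions of $\Om\times(t,t+1)$; both transfer the uniform bound to $v$ and yield the decay in the natural essential-supremum sense.
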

\begin{proof}
 Using characteristic functions of sets $\Om\times(t,t+1)$ for sufficiently large $t$ as test functions in the weak-$*$-convergence statement \eqref{conv:vweakstar}, from Lemma \ref{lem:vtozero} we obtain that for every $\eta>0$ there is $T>0$ such that $\norm[L^\infty((T,\infty);\Lom p)]{v}<\eta$,
 %By means of \eqref{conv:vweakstar}, \eqref{eq:weaksolvtozero} immediately results from Lemma \ref{lem:vtozero},
whereas \eqref{eq:weaksoluconv} is implied by \eqref{uminlimitinL2}.
\end{proof}

\begin{remark}
 If $N\leq 3$, the uniform bound on $\int_t^{t+1}\io |\na v|^4$ contained in Lemma \ref{lem:energyfunctional} proves to be sufficient for \eqref{eq:weaksolvtozero} even to hold for $p=\infty$, which can be used as starting point for derivation of eventual smoothness of solutions via a quasi-energy-inequality for $\io \frac{u^p}{(\eta -v)^\theta}$ with suitable numbers $\theta$ and $\eta $. This result is already contained in \cite{lankeit_fluid}.
\end{remark}

\begin{proof}[Proof of Theorem \ref{thm:weaksol-limit}]
 Lemma \ref{lem:weaklimit} is identical with Theorem \ref{thm:weaksol-limit}.
\end{proof}

\section*{Acknowledgment}

 J.~Lankeit acknowledges support of the {\em Deutsche Forschungsgemeinschaft} within the project {\em Analysis of chemotactic cross-diffusion in complex
 frameworks}. Y.~Wang was supported by the NNSF of China (no. 11501457).

{\small
% \bibliographystyle{abbrv}
% \bibliography{lit.bib}

}
\end{document}